\renewcommand{\@seccntformat}[1]{%
	\ifcsname format@#1\endcsname
	\csname format@#1\endcsname
	\else
	\csname the#1\endcsname\quad 
	\fi
}
\g@addto@macro\appendix{%
	\def\format@section{Appendix \thesection: }%
}
\newtheorem{thm}{Theorem}[section]
\newtheorem{lem}[thm]{Lemma}
\newtheorem{dfn}[thm]{Definition}
\newtheorem{cor}[thm]{Corollary}
\newtheorem*{rem*}{Remark}
\newcommand{\ZZ}{\mathbb{Z}} 
\newcommand{\FF}{\mathbb{F}}  
\newcommand{\rcr}{Q_n(d,r)}     
\newcommand{\cay}{\mathrm{Cay}}
\newcommand{\bsw}{\mathsf{bw}}
\newcommand{\td}{\mathsf{td}} 
\newcommand{\leap}{L}  
\newcommand{\diam}{\mathsf{diam}} 
\newcommand{\Aut}{\mathrm{Aut}} 
\newcommand{\dist}{\mathsf{dist}}    
\def\e{\boldsymbol{e}}
\def\bo{\boldsymbol{0}_n}
\def\a{\boldsymbol{a}}
\def\b{\boldsymbol{b}}
\def\c{\boldsymbol{c}}
\newcommand{\nd}{\lceil n / d  \rceil}  
\newcommand{\mnd}{q} 
\newcommand{\alfndr}{\alpha_{n,d,r}}
\newcommand{\btndr}{\beta_{n,d,r}}
\newcommand{\rhlf}{\lfloor r/2 \rfloor}  
\newcommand{\rr}{\lfloor r^2 / 2 \rfloor}
\def\ax{(\a,x)} 
\def\by{(\b,y)} 
\def\cz{(\c,z)} 
\def\oo{(\bo,0)} 
\begin{document}
	
	\title{Recursive cubes of rings as models for interconnection networks}
	\author{Hamid Mokhtar\thanks{Corresponding author. Email: hmokhtar@student.unimelb.edu.au}\; and Sanming Zhou\thanks{Email: sanming@unimelb.edu.au}\\ 
			\small School of Mathematics and Statistics \\ 
			\small The University of Melbourne\\ 
			\small Parkville, VIC 3010, Australia}

	\maketitle
	\openup 0.5\jot 	
\begin{abstract} 
We study recursive cubes of rings as models for interconnection networks. We first redefine each of them as a Cayley graph on the semidirect product of an elementary abelian group by a cyclic group in order to facilitate the study of them by using algebraic tools. We give an algorithm for computing shortest paths and the distance between any two vertices in recursive cubes of rings, and obtain the exact value of their diameters.
We obtain sharp bounds on the Wiener index, vertex-forwarding index, edge-forwarding index and bisection width of recursive cubes of rings. The cube-connected cycles and cube-of-rings are special recursive cubes of rings, and hence all results obtained in the paper apply to these well-known networks.  

\medskip
{{\em Keywords:}  Cayley graph; recursive cube of rings;  cube-connected cycles; interconnection networks; routing; shortest path; diameter; Wiener index; edge-forwarding index; vertex-forwarding index; bisection width}
\end{abstract}

\section{Introduction}

The design and analysis of interconnection networks plays an important role in parallel computing, cloud computing, VLSI, etc. In the literature, many network structures have been proposed and studied \cite{dally2004, bermond2000,zhou2009,thomson2008,cortina1998,shah2001,zhou2006, lai2010} for different purposes. Various factors need to be considered in order to achieve high performance and low construction costs of an interconnection network. Among them, vertex-transitivity, small and fixed node degree, small diameter, recursive construction, existence of efficient routing algorithms are some of the desirable properties \cite{heydemann1989,gan,park2000,laksh1993}. For example, networks with smaller diameters will lead to shorter data transmission delay. The forwarding indices \cite{chung1987,heydemann1997} and bisection width are also well-known measures of performance of interconnection networks \cite{shah2001,yan2009,thomson2008,thomson2010,thomson2014,gan,heydemann1989}. 

It is widely known \cite{heydemann1997} that Cayley graphs are good models for interconnection networks due to their many desirable properties, including vertex-transitivity and efficient routing algorithms. In fact, any Cayley graph admits an all-to-all shortest path routing that loads all vertices uniformly \cite{heydemann1994}, and some Cayley graphs have analogous properties with respect to edges \cite{sole1994, zhou2009}. In the literature, several families of Cayley graphs, including circulants, recursive circulants, hypercubes, cube-connected cycles, cube-of-rings, star graphs, butterflies and orbital regular graphs, have been studied from the viewpoint of routing algorithms \cite{gan, park2000, thomson2008, thomson2010, thomson2014}, diameters, and forwarding indices \cite{gauyacq1997,heydemann1997,sole1994,kosowski2009,heydemann1989,cortina1998,shah2001, thomson2008, thomson2010, thomson2014}. All-to-all routings that uniformly load all edges along with edge-forwarding indices were given in \cite{gauyacq1997} for star graphs and in \cite{Fang2012,sole1994,thomson2008, thomson2010, thomson2014, zhou2009} for a few families of Frobenius graphs. 

Since the class of Cayley graphs is huge, it is not a surprise that not every Cayley graph has all desired network properties. For instance, the degrees of hypercubes and recursive circulants increase with their orders, and the diameters of low degree circulants are larger than the logarithm of their orders. In order to overcome shortcomings of existing graphs, Cayley graphs with better performance are in demand. Inspired by the work in \cite{cortina1998}, an interesting family of graphs, called \emph{recursive cubes of rings}, were proposed as interconnection networks in \cite{sun2000}. A recursive cube of rings is not necessarily a Cayley graph, as shown in \cite{xie2013,hu2005} by counterexamples to \cite[Property 4]{sun2000}. Nevertheless, under a natural condition this graph is indeed a Cayley graph as we will see later. In \cite{choi2008} the vertex-disjoint paths problem for recursive cubes of rings was solved by using Hamiltonian circuit Latin squares, and in \cite{sun2000} the recursive construction of them was given. The diameter problem for recursive cubes of rings has attracted considerable attention: An upper bound was given in \cite[Property 5]{sun2000} but shown to be incorrect in \cite[Example 6]{xie2013}; and another upper bound was given in \cite[Theorem 13]{xie2013} but it was unknown whether it gives the exact value of the diameter. A result in \cite{hu2005} on the diameter of a recursive cube of rings was also shown to be incorrect in \cite{xie2013}.

\subsection{Main results}


The purpose of this paper is to conduct a comprehensive study of recursive cubes of rings. As mentioned above, a recursive cube of rings as defined in \cite{sun2000, xie2013} is not necessarily a Cayley graph. We will give a necessary and sufficient condition for this graph to be a Cayley graph (see Theorem \ref{thm:rcriicay}). We will see that, under this condition (given in \eqref{eq:grp}), a recursive cube of rings as in \cite{xie2013} can be equivalently defined as a Cayley graph on the semidirect product of an elementary abelian group by a cyclic group (see Definition \ref{def:rcr}). We believe that this definition is more convenient for studying various network properties of recursive cubes of rings. For example, from our definition it follows immediately that the cube-connected cycles \cite{preparata1981} and cube-of-rings \cite{cortina1998} are special recursive cubes of rings. 

The above-mentioned condition (see \eqref{eq:grp}) will be assumed from Section \ref{sec:shrtpth} onwards. 
In Section \ref{sec:shrtpth}, we give a method for finding a shortest path between any two vertices and a formula for the distance between them in a recursive cube of rings (see Theorems \ref{thm:Pax1} and \ref{thm:Pax2}). In Section \ref{sec:diam}, we give an exact formula for the diameter of any recursive cube of rings (see Theorem \ref{thm:diam}). This result shows that the upper bound for the diameter given in \cite{xie2013} is not tight in general, though it is sharp in a special case. In Section \ref{sec:td}, we give nearly matching lower and upper bounds on the Wiener index of a recursive cube of rings, expressed in terms of the total distance from a fixed vertex to all other vertices (see Theorems \ref{thm:td1} and \ref{thm:td2}). These results will be used in Section \ref{sec:indices} to obtain the vertex-forwarding index (see Theorem \ref{thm:xi}) and nearly matching lower and upper bounds for the edge-forwarding index (Theorem \ref{thm:pi}) of a recursive cube of rings. Another tool for obtaining the latter is the theory \cite{shah2001} of integral uniform flows in orbital-proportional graphs. In Section \ref{sec:bw}, we give nearly matching lower and upper bounds for the bisection width of a recursive cube of rings, which improve the existing upper bounds in \cite{hu2005,sun2000,xie2013}. 

Since the cube-connected cycles \cite{preparata1981} and cube-of-rings \cite{cortina1998} are special recursive cubes of rings, all results obtained in this paper are valid for these well known networks. In particular, we recover a couple of existing results for them in a few case, and obtain new results for them in the rest cases. All results in the paper are also valid for the network $RCR$-$II(d ,r ,n-d )$ \cite{xie2013} with $dr \equiv 0\mod{n}$ (see the discussion in Section \ref{sec:larger}).

Our study in this paper shows that recursive cubes of rings enjoy fixed degree, logarithmic diameter and relatively small forwarding indices in some cases, and flexible choice of order and other invariants when their defining parameters vary. Therefore, they are promising topologies for interconnection networks.

\subsection{Terminology and notation}
\label{sec:term}

All graphs considered in the paper are undirected graphs without loops and multi-edges unless stated otherwise. Since any interconnection network can be modelled as a graph, we use the terms `graph' and `network' interchangeably.  

A \emph{path} of \emph{length} $n$ between two vertices $u$ and $v$ in a graph $X$ is a sequence $u = u_0, e_1, u_1, e_2, \ldots, u_{n-1}, e_n, u_{n} = v$, where $u_0, u_1,  \ldots, u_{n-1}, u_{n}$ are pairwise distinct vertices of $ X $ and $e_1, e_2, \ldots, e_n$ are pairwise distinct edges of $ X $ such that $e_i$ is the edge joining $u_{i-1}$ and $u_{i}$, $1 \le i \le n$. We may simply represent such a path by $u_0, u_1,  \ldots, u_{n-1}, u_{n}$ or $e_1, e_2, \ldots, e_n$. A path between $u$ and $v$ with minimum length is called a \emph{shortest path} between $u$ and $v$.  
The \emph{distance} between $u$ and $v$ in $ X $, denoted by $ \dist(u,v) $, is the length of a shortest path between them in $ X $, and is $\infty$ if there is no path in $ X $ between $u$ and $v$.  The diameter of $ X $ is defined as $\diam(X) := \max_{u,v\in V(X)} \dist(u,v)$. The \emph{Wiener index} of $X$ is defined as $W(X) := \sum_{u,v\in V(X)} \dist(u,v)$, with the sum over all unordered pairs of vertices $u, v$ of $X$. The Wiener index is important for chemical graph theory \cite{bonchev2002} but is difficult to compute in general. It is also used to estimate (or compute) the edge-forwarding index of a network (see \cite{zhou2009, thomson2008, thomson2014}).  

A permutation of $ V(X) $ is called an \emph{automorphism} of $ X $ if it preserves the adjacency and non-adjacency relations of $ X $. The set of all automorphisms of $ X $ under the usual composition of permutations is a group, $\Aut(X)$, called the \emph{automorphism group} of $ X $. If $ \Aut(X) $ is transitive on $V(X)$, namely any vertex can be mapped to any other vertex by an automorphism of $X$, then $ X $ is called \emph{vertex-transitive}. The definition of an \emph{edge-transitive} graph is understood similarly. 

If $X$ is vertex-transitive, then define the \emph{total distance} $\td(X)$ of $X$ to be the sum of the distances from any fixed vertex to all other vertices in $X$. It can be easily seen that, for a vertex-transitive graph $X$, the average distance of $X$ is equal to $\td(X)/(|V(X)|-1)$ and the Wiener index of $ X $ is given by $ W(X) = |V(X)|\td(X)/2 $.  

Let $G$ be a group and $S$ be a subset of $G$ such that $ 1_G\notin S$ and $s^{-1} \in S$ for $s \in S$, where $ 1_G $ is the identity element of $ G $. Then the {\em Cayley graph} on $G$ with respect to the \emph{connection set} $S$, denoted by $\cay(G,S)$, is defined to have vertex set $G$ such that $u, v \in G$ are adjacent if and only if $u^{-1}v \in S$. It is known that $\cay(G,S)$ is connected if and only if $S$ is a generating set of $G$. It is also well known that $G$ acts on itself by left-regular multiplication as a group of automorphisms of $\cay(G,S)$. That is, every $g \in G$ gives rise to an automorphism $\hat{g}: G \rightarrow G,\ u \mapsto g^{-1}u$ of $\cay(G,S)$, and the group of these permutations $\hat{g}$ form a vertex-transitive subgroup of $\Aut(\cay(G,S))$ that is isomorphic to $G$. In particular, $\cay(G,S)$ is vertex-transitive. 

Let $K$ and $H$ be two groups such that $H$ acts on $K$ as a group. This is to say that, for any $k\in K$, $h\in H$, there corresponds an element of $K$ denoted by $\varphi_h(k)$ such that $\varphi_{1_H}(k)=k$, $\varphi_{h_2}(\varphi_{h_1}(k)) = \varphi_{ h_2 h_1}(k)$ and $\varphi_h(k_1k_2) =\varphi_h(k_1)\varphi_h( k_2)$ for any $k, k_1, k_2 \in K$ and $h, h_1, h_2 \in H$. (In other words, $\varphi: h \mapsto \varphi_h$ defines a homomorphism from $H$ to $\Aut(K)$.) The \emph{semidirect product} of $K$ by $H$ with respect to this action, denoted by $K \rtimes_{\varphi} H$, is the group defined on $K \times H$ ($= \{(k, h): k \in K, h \in H\}$) with operation given by 
\begin{equation}
\label{eq:sd} 
(k_1,h_1) (k_2,h_2) = (k_1 \varphi_{h_1}(k_2),h_1h_2). 
\end{equation}
(A few equivalent definitions of the semidirect product exist in the literature. We use the one in \cite[pp.~22--23]{alperin1995} for convenience of our presentation.)

Throughout the paper we assume that $n$, $d$ and $r$ are positive integers with $n \ge 2$ and $n \ge d$, and $\log a$ is meant $\log_2 a$. From Section \ref{sec:shrtpth} onwards we assume that $ r\ge 3 $. 
 
\section{Recursive cubes of rings}
\label{sec:prl}

In this section we give our definition of a recursive cube of rings. This network is essentially the network RCR-II defined in \cite{xie2013}, which in turn is a modified version of the original recursive cube of rings introduced in \cite{sun2000}. However, unlike \cite{sun2000} and \cite{xie2013}, we impose a condition (see (\ref{eq:grp}) below) to ensure that the network is a Cayley graph and so has the desired symmetry. Without this condition a recursive cube of rings does not behave nicely -- it may not even be regular -- as shown in \cite{hu2005, xie2013}. The treatment in our paper is different from that in \cite{sun2000} and \cite{xie2013}: We define a recursive cube of rings (under condition (\ref{eq:grp})) as a Cayley graph on the semidirect product of an elementary abelian $2$-group by a cyclic group. This definition makes the adjacency relation easier to understand and also facilitates subsequent studies of such networks as we will see later.

\subsection{Recursive cubes of rings}
\label{sec:rcr}

Denote by $\e_i$ the row vector of $\FF_2^n$ (the $n$-dimensional vector space over the 2-element field $\FF_2 = \{0, 1\}$) with the $i$th coordinate 1 and all other coordinates $0$, and denote its transpose by $\e_i^\intercal$, $1 \le i \le n$. An important convention for our discussion is that the subscripts of these vectors are taken modulo $n$, so that $\e_0$ is $\e_n$, $\e_{n+1}$ is $\e_{1}$, $\e_{-2}$ is $\e_{n-2}$, and so on.  Define 
$$
M = [\e_2^\intercal,\dots ,\e_{n}^\intercal,\e_1^\intercal]
$$ 
and treat it as an element of the multiplicative group $GL(n, 2)$ of invertible $n \times n$ matrices over $\FF_2$. Then $M^n = I_n$ is the identity element of $GL(n, 2)$ and 
\[
\e_{i}M^{j} = \e_{i + j}
\]
for any integers $i$ and $j$. It can be verified that, under the condition  
\begin{equation}
\label{eq:grp}
	d r  \equiv 0\mod{n},
\end{equation}
the mapping
$$
\varphi: \ZZ_r \rightarrow \Aut(\ZZ_2^n) = GL(n, 2), x \mapsto \varphi_x
$$ 
defined by 
\begin{equation}
\label{eq:action}
\varphi_x (\a) = \a M^{dx},\; \a = (a_1, a_2, \ldots, a_{n}) \in \ZZ_2^n, \, x \in \ZZ_r
\end{equation}
is a homomorphism from $\ZZ_r$ to $\Aut(\ZZ_2^n)$. In other words, the rule (\ref{eq:action}) defines an action as a group of the cyclic group $\ZZ_r$ on the elementary abelian 2-group $\ZZ_2^n$. (Since $M^n = I_n$, the exponent $dx$ of $M$ can be thought as taken modulo $n$.) In fact, for any integers $x, y$ with $x \equiv y\mod r$, by (\ref{eq:grp}) and the fact $M^n = I_n$ we have $M^{dx} = M^{dy}$ and so $\a^x$ defined in (\ref{eq:action}) does not rely on the choice of the representative $x \in \ZZ_r$. Moreover, for $\a, \b \in \ZZ_2^n$ and $x, y \in \ZZ_r$, we have $\varphi_0(\a) = \a$, $\varphi_y(\varphi_x(\a)) = \varphi_y(\a M ^{dx}) = (\a M ^{dx}) M^{dy} = \a M^{d(x+y) } = \varphi_{x+y}(\a)$ and $\varphi_x(\a + \b) = (\a+\b) M^{dx} = \a M^{dx} + \b M^{dx} = \varphi_x(\a) + \varphi_x(\b)$. Since the operations of $\ZZ_2^n$ and $\ZZ_r$ are additions, it follows that indeed (\ref{eq:action}) defines an action of $\ZZ_r$ on $\ZZ_2^n$ as a group. 

Define
\begin{equation}
\nonumber
G := \ZZ_2^n \rtimes_{\varphi} \ZZ_r
\end{equation}
to be the semidirect product of $\ZZ_2^n$ by $\ZZ_r$ with respect to the action (\ref{eq:action}). In view of (\ref{eq:sd}), the operation of $G$ is given by
\begin{equation}
\nonumber
\ax  \by = (\a + \b M^{dx} , x + y),
\end{equation}
where the second coordinate $x+y$ is taken modulo $r$. It can be verified that the identity element of $G$ is $\oo$ and the inverse of $\ax$ in $G$ is $(-\a M^{-dx} , r - x)$, where $\mathbf{0}_n = (0, 0, \ldots, 0)$ is the identity element of $\ZZ_2^n$.   

\begin{dfn}
	\label{def:rcr} 
	{\em Let $r \ge 1$, $n \ge 2$ and $d \ge 1$ be integers such that $ n\ge d $ and $d r  \equiv 0\mod{n}$. Define $\rcr$ to be the Cayley graph $\cay(G,S)$ on $G = \ZZ_2^n \rtimes_{\varphi} \ZZ_r$ with respect to the connection set
		\begin{equation}
			\label{eq:setS}
			S:= \{(\boldsymbol{0}_{n},1), (\boldsymbol{0}_{n}, r - 1), (\boldsymbol{e}_{1}, 0), (\boldsymbol{e}_{2}, 0), \ldots, (\boldsymbol{e}_{d}, 0)\}.
		\end{equation}	 
		In other words, $\rcr$ has vertex set $G$ such that for any $\ax \in G$ the neighbours of $\ax$ are: 
		\begin{itemize}
			\item[(a)] $\ax (\e_{i},0) = (\a+\e_{i + dx}, x)$, $ 1 \le i \le d$;  
			\item[(b)] $\ax (\boldsymbol{0}_{n},1) = (\a, x + 1)$ and $\ax (\boldsymbol{0}_{n}, r - 1) = (\a, x - 1)$. 
		\end{itemize}
		We call $ \rcr $ a \emph{recursive cube of rings}. 
		
		The edge joining $\ax$ and $(\a+\e_{i+dx}, x)$ is called a {\em cube edge} of $\rcr$ with {\em direction} $\e_{i}$, and $(\a+\e_{i+dx}, x)$ is called a {\em cube neighbour} of $\ax$. 
		
		The edges joining $\ax$ and $(\a, x + 1), (\a, x - 1)$ are two {\em ring edges} of $\rcr$, and these two vertices are the {\em ring neighbours} of $\ax$. 
		
		The cycle $(\a, 0), (\a, 1), \ldots, (\a, r-1), (\a, 0)$ of $\rcr$ with length $r$ is called the $\a$-\emph{ring} of $ \rcr $.}
\end{dfn}

Since $ dr$ is a multiple of $n $ by our assumption, whenever $x \equiv y\mod r$ we have $(\a+\e_{i + dx}, x) = (\a+\e_{i + dy}, y)$, and so $\rcr$ is well-defined as an undirected graph. 
We may think of $\rcr$ as obtained from the $n$-dimensional cube $Q_n$ (with vertex set $\ZZ_2^n$) by replacing each vertex $\a$ by the corresponding $\a$-ring and then adding cube edges by using rule (a) in Definition \ref{def:rcr}. See Figure \ref{fig:xmpl} for an illustration.  

\begin{figure}[hb]
	\centering
	\includegraphics[scale=.4]{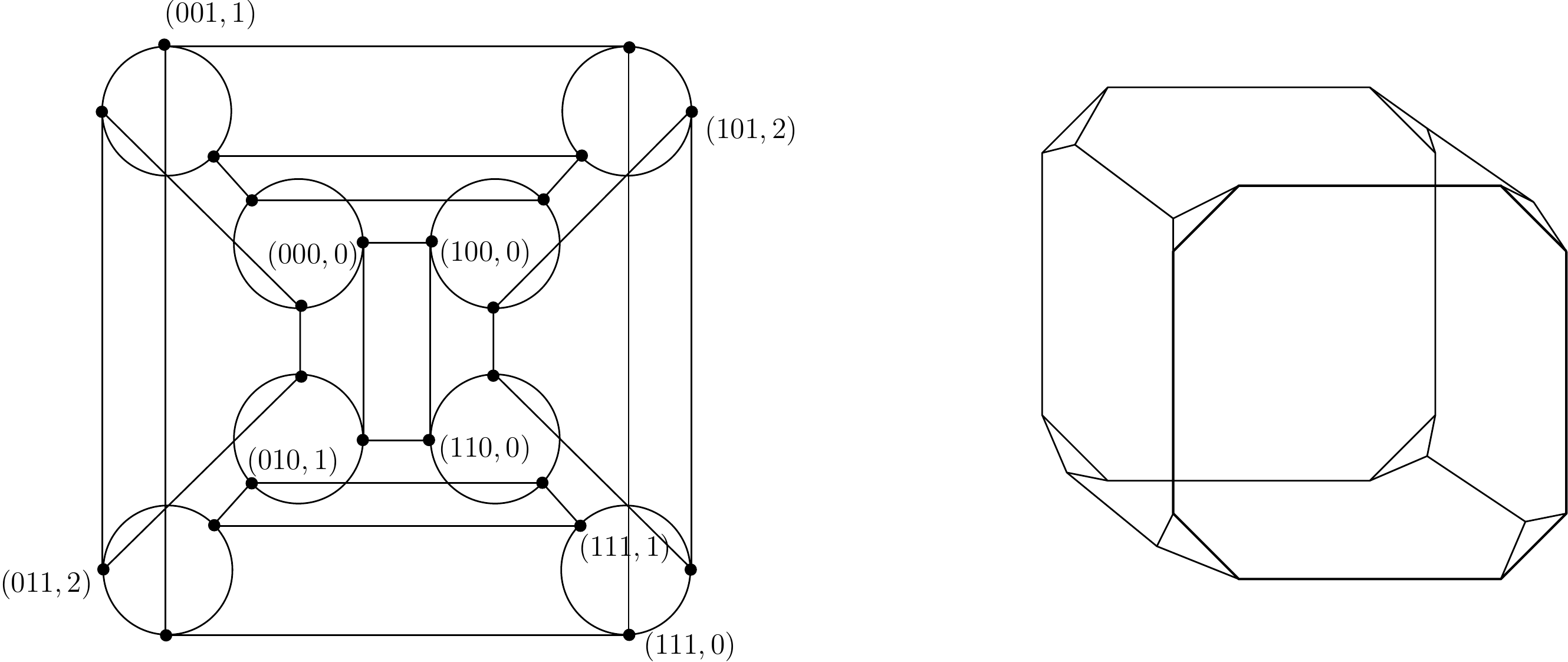}
	\caption{$Q_3(2,3)$ and $Q_3(1,3)$} 
	\label{fig:xmpl}
\end{figure}

The next lemma shows that recursive cubes of rings are common generalizations of three well-known families of interconnection networks, namely, hypercubes, cube-connected cycles $CC_n$ and cube-of-rings $COR(d,r)$ \cite{cortina1998}. $CC_n$ can be defined as the Cayley graph on $\ZZ_2^n \times \ZZ_n $ such that $ \ax$ is adjacent to $\by$ if and only if either $\a = \b $ and $x \equiv y \pm 1 \mod n$, or $\b = \a + \e_{1+x}$ and $x \equiv y \mod n$ (see e.g. \cite{shah2001}). $COR(d,r)$ can be defined \cite[Lemma 2]{cortina1998} as the Cayley graph on the semidirect product of $\ZZ_2^{dr}$ by $\ZZ_r$ with operation given by $\ax  \by = (\a M^{dy}+ \b  , x + y)$, with respect to the connection set $\{(\boldsymbol{0}_{dr},1), (\boldsymbol{0}_{dr}, r - 1), (\boldsymbol{e}_{1}, 0), (\boldsymbol{e}_{2}, 0), \ldots, (\boldsymbol{e}_{d}, 0) \}$.  

\begin{lem}	
	\label{lem:ccc}
	The following hold:
	 \[ 
	 		Q_n \cong Q_{n}(n, 1),\; CC_n \cong Q_{n}(1,n),\;  COR(d, r) \cong Q_{dr}(d, r).
	 \]
	  In other words,  hypercubes, cube-connected cycles and cubes-of-rings are special recursive cubes of rings.
\end{lem}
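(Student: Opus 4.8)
My plan is to treat the three isomorphisms one at a time, in each case matching the neighbour lists furnished by Definition~\ref{def:rcr} against those of the target network. The first two isomorphisms are essentially bookkeeping: the two graphs already have the same vertex set, so I need only check that the identity map preserves adjacency. The third is the substantive one, because the group underlying $COR(d,r)$ and the group $\ZZ_2^{dr}\rtimes_\varphi\ZZ_r$ underlying $Q_{dr}(d,r)$ carry different multiplications, and I expect it to be the sole obstacle.

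For $Q_n\cong Q_n(n,1)$ I would specialise to $d=n$, $r=1$. Then $\ZZ_r$ is trivial and $G=\ZZ_2^n\rtimes_\varphi\ZZ_1$ is just $\ZZ_2^n$; the ring generators $(\bo,1)$ and $(\bo,r-1)$ in \eqref{eq:setS} both coincide with the identity $\oo$ and so leave the connection set, while the cube generators $(\e_1,0),\dots,(\e_n,0)$ remain. By rule~(a) of Definition~\ref{def:rcr} with $x=0$, the neighbours of $(\a,0)$ are exactly $\a+\e_i$, $1\le i\le n$, which are precisely the edges of the hypercube $Q_n$. For $CC_n\cong Q_n(1,n)$ I would put $d=1$, $r=n$, so that both graphs have vertex set $\ZZ_2^n\times\ZZ_n$. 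Reading off Definition~\ref{def:rcr}, the neighbours of $\ax$ are the cube neighbour $(\a+\e_{1+x},x)$ from rule~(a) and the ring neighbours $(\a,x+1)$, $(\a,x-1)$ from rule~(b); comparing these term by term with the stated adjacency of $CC_n$ shows the two relations coincide, so the identity map is the required isomorphism.

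The heart of the proof is $COR(d,r)\cong Q_{dr}(d,r)$, where $n=dr$. The cleanest route, which I would take, is to observe that the $COR(d,r)$ group is exactly the opposite group of $G=\ZZ_2^{dr}\rtimes_\varphi\ZZ_r$: reversing the $G$-product $\ax\by=(\a+\b M^{dx},x+y)$ gives $\by\,\ax=(\a M^{dy}+\b,x+y)$, which is precisely the $COR(d,r)$ multiplication $(\a M^{dy}+\b,x+y)$. Since inversion $g\mapsto g^{-1}$ is always an isomorphism from a group's opposite onto the group, and since by the inverse formula in the excerpt this map is $\theta(\a,x)=(\a M^{-dx},-x)$ (using $-\a=\a$ over $\FF_2$), the map $\theta$ is a group isomorphism from the $COR(d,r)$ group onto $G$. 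Finally, because $S$ in \eqref{eq:setS} is a connection set it satisfies $S=S^{-1}$, so $\theta(S)=S^{-1}=S$; explicitly, $\theta$ fixes each cube generator $(\e_i,0)$ and interchanges the ring generators $(\bo,1)$ and $(\bo,r-1)$. A group isomorphism carrying the connection set of one Cayley graph onto that of the other is a graph isomorphism, so $COR(d,r)\cong Q_{dr}(d,r)$. The only genuine computation here, and the main obstacle, is confirming the opposite-group identity between the two multiplications, after which the connection-set check is automatic.
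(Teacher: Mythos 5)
Your proposal is correct and follows essentially the same route as the paper: the first two isomorphisms are read off directly from Definition~\ref{def:rcr}, and for $COR(d,r)\cong Q_{dr}(d,r)$ the paper uses exactly the inversion map $(\a,x)\mapsto(\a,x)^{-1}=(-\a M^{-dx},r-x)$ that you arrive at. Your opposite-group framing and the explicit check that $\theta(S)=S$ merely spell out why that map works, which the paper leaves implicit.
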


\begin{proof} 
	When $r=1$, we have $\ZZ_2^n \rtimes_{\varphi} \ZZ_1 \cong \ZZ_2^n$ and $Q_n \cong Q_{n}(n, 1)$ by the definitions of the two graphs.
	By the discussions above, $CC_n$ is the Cayley graph on $\ZZ_2^n \rtimes_{\varphi} \ZZ_n $ with respect to the connection set $\{(\boldsymbol{0}_n , 1) , (\boldsymbol{0}_n , n-1) , (\e_{1}, 0) \}$; hence, $CC_n$ is isomorphic to $Q_{n}(1,n)$. Similarly, $COR(d, r)$ is isomorphic to $Q_{dr}(d, r)$. In fact, the permutation of $\ZZ_2^n\times \ZZ_r$ defined by $(\a, x) \mapsto (\a, x)^{-1} = (-\a M^{-dx} , r-x)$ is an isomorphism from  $COR(d, r)$ to $Q_{dr}(d, r)$. 
\end{proof}	
 
Since hypercubes have been well studied, we will not consider them anymore. Also, we will not consider the less interesting case where $ r=2 $, for which the neighbours $(\a, x + 1)$ and $ (\a, x - 1)$ of $ \ax $ are identical and the ring edges $\{\ax,(\a,x+1) \}$ and $\{\ax, (\a,x-1)\}$ are parallel edges. We assume $ r\ge 3 $ in the rest of this paper.  
 
The following observation follows from the definition of $\rcr$ immediately. 

\begin{lem}	
\label{lem:basic}
	Suppose $ r\ge 3 $. Then $\rcr$ is a connected $(d+2)$-regular graph with $2^n r$ vertices and $ 2^{n-1} r(d+2)$ edges. 
\end{lem}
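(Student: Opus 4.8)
The plan is to derive every assertion from the fact that $\rcr = \cay(G,S)$ is a Cayley graph, together with the standard facts recalled in Section~\ref{sec:term}. The vertex count is immediate: since $G = \ZZ_2^n \rtimes_{\varphi} \ZZ_r$ has underlying set $\ZZ_2^n \times \ZZ_r$, we get $|V(\rcr)| = |G| = 2^n r$. For the degree I would invoke the fact that $\cay(G,S)$ is $|S|$-regular whenever $S$ avoids $1_G$, is inverse-closed, and consists of pairwise distinct elements; the edge count $2^{n-1} r(d+2)$ then follows from the handshake identity $|E| = \tfrac{1}{2}|V(\rcr)|\deg$.

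The verifications concerning $S$ are routine. One checks that $\oo \notin S$ (the two ring elements have nonzero second coordinate and the $d$ cube elements have nonzero first coordinate), that $S$ is inverse-closed (using the inverse formula one has $(\bo,1)^{-1} = (\bo, r-1)$, while each $(\e_i,0)$ is an involution since $-\e_i = \e_i$ in $\ZZ_2^n$ and $r - 0 \equiv 0 \bmod r$), and that the $d+2$ listed elements are pairwise distinct. This last point is exactly where $r \ge 3$ is needed: it forces $1 \not\equiv r-1 \bmod r$, so $(\bo,1) \ne (\bo,r-1)$; the $(\e_i,0)$ are distinct because the $\e_i$ are, and they differ from the ring elements in the second coordinate. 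Hence $|S| = d+2$ and $\rcr$ is $(d+2)$-regular.

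The substantive step is connectedness, which by the criterion in Section~\ref{sec:term} amounts to showing that $S$ generates $G$. I would first note that $(\bo,1)$ generates the cyclic subgroup $\{\bo\}\times\ZZ_r$, so every $(\bo,x)$ lies in $\langle S\rangle$. A short computation using $\e_i M^{dx} = \e_{i+dx}$ gives the conjugation formula $(\bo,x)(\e_i,0)(\bo,x)^{-1} = (\e_{i+dx}, 0)$, so $\langle S\rangle$ contains $(\e_{i+dx},0)$ for all $1 \le i \le d$ and $x \in \ZZ_r$. As $i$ and $x$ range over these values, the indices $i+dx$ run through the integers $1,2,\dots,dr$, and since $dr$ is a positive multiple of $n$ with $dr \ge n$, these $dr$ consecutive integers cover every residue modulo $n$. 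Thus $(\e_j,0) \in \langle S\rangle$ for all $1 \le j \le n$. Because $(\e_{j_1},0)(\e_{j_2},0) = (\e_{j_1}+\e_{j_2},0)$ and the $\e_j$ form a basis of $\ZZ_2^n$, the subgroup $\langle S\rangle$ contains all $(\a,0)$ with $\a \in \ZZ_2^n$; combined with the $(\bo,x)$ and the identity $(\a,0)(\bo,x) = (\a,x)$, this yields $\langle S\rangle = G$, so $\rcr$ is connected.

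The main obstacle is this generation argument, and in particular the index-covering claim that $\{\,i+dx : 1 \le i \le d,\ x \in \ZZ_r\,\}$ hits every residue modulo $n$. I expect the cleanest route is the observation above: the numbers $i+dx$ are precisely $1,\dots,dr$, and $dr$ being a multiple of $n$ no smaller than $n$ forces them to realize all of $\ZZ_n$. Everything else reduces to bookkeeping with the group operation and the standard Cayley-graph facts already available.
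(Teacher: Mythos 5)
Your proposal is correct and follows essentially the same route as the paper: the paper dismisses the order, degree and edge count as immediate from the Cayley-graph definition and devotes its proof entirely to connectedness, which it establishes by exactly your observation that the indices $i+dx$ sweep the integers $1,\dots,dr=tn$ and hence cover every residue modulo $n$, so that $S$ generates $G$. Your additional bookkeeping (inverse-closure and distinctness of $S$, the conjugation formula $(\bo,x)(\e_i,0)(\bo,x)^{-1}=(\e_{i+dx},0)$) is sound and merely makes explicit what the paper leaves to the reader.
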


\begin{proof}
	Only the connectedness requires justification. Since $d r  \equiv 0\mod{n}$, we may assume $dr = tn$ for some integer $t$.  Since $i + dx$ runs over all integers from $1$ to $tn$ when $i$ is running from $1$ to $d$ and $x$ from $0$ to $r-1$, the set $S$ given in (\ref{eq:setS}) is a generating set of $\ZZ_2^n \rtimes_{\varphi} \ZZ_r$. Hence $\rcr$ is connected. 
\end{proof}

It is worth mentioning that in general $ \rcr $ may not be edge-transitive as  $CC_n$ is not edge-transitive \cite{heydemann1989}. 

Denote
\begin{equation}
\label{eq:Fx}
	D(x) := \{i + dx\mod{n}:  1 \le i \le d \},\;\, x \in \ZZ_r. 
\end{equation}

\begin{lem}	
	\label{lem:rpit}
	For any fixed $\a\in\ZZ_2^n$ and $j$ with $1 \le j \le n$, there are exactly $d r/n$ distinct cube edges of $\rcr$ with direction $\mathbf{e}_j$ that are incident to some vertices of the $\a$-ring, namely the edges joining $(\a, x_l)$ and $(\a + \e_j, x_l)$, where $x_l = \lfloor (j+ln-1)/d \rfloor$, $0 \le l < dr/n$.   
\end{lem}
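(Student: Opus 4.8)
The plan is to reduce the statement to an elementary counting problem about the residues $i+dx\bmod n$. A cube edge incident to the $\a$-ring whose other endpoint differs from an $\a$-ring vertex in coordinate $j$ must join $(\a,x)$ to $(\a+\e_j,x)$ for some $x\in\{0,1,\dots,r-1\}$, and by Definition~\ref{def:rcr} this is an edge of $\rcr$ exactly when $\e_j=\e_{i+dx}$ for some $1\le i\le d$, i.e. when $j\in D(x)$ with $D(x)$ as in \eqref{eq:Fx}. Since for fixed $\a$ and $j$ such an edge is completely determined by its ring position $x$, proving the lemma amounts to showing that there are exactly $dr/n$ values $x\in\{0,\dots,r-1\}$ with $j\in D(x)$, and that these are precisely the $x_l$ listed.

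First I would record the key bijection: as the pair $(i,x)$ ranges over $\{1,\dots,d\}\times\{0,\dots,r-1\}$, the integer $i+dx$ runs over $\{1,2,\dots,dr\}$, taking each value exactly once; indeed, given $v$ in this range the division algorithm returns the unique pair with $x=\lfloor(v-1)/d\rfloor$ and $i=v-dx\in\{1,\dots,d\}$. This is essentially the observation already used in the proof of Lemma~\ref{lem:basic}. Now $j\in D(x)$ holds iff $i+dx\equiv j\pmod n$ for some admissible $i$, so I must count the $v\in\{1,\dots,dr\}$ with $v\equiv j\pmod n$. Writing $dr=tn$ with $t:=dr/n$ (possible by condition \eqref{eq:grp}) and using $1\le j\le n$, these $v$ are exactly $v=j+ln$ for $0\le l\le t-1$, giving precisely $t=dr/n$ of them.

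It then remains to read off the ring positions and to verify distinctness. For $v=j+ln$ the associated pair has $x_l=\lfloor(v-1)/d\rfloor=\lfloor(j+ln-1)/d\rfloor$, matching the claimed formula, and the resulting cube neighbour of $(\a,x_l)$ is $(\a+\e_{i+dx_l},x_l)=(\a+\e_j,x_l)$ since $i+dx_l=v\equiv j\pmod n$. Finally I would check that distinct $l$ give distinct $x_l$: if $x_l=x_{l'}$ then $v_l-v_{l'}=(l-l')n$ equals the difference $i_l-i_{l'}$ of two elements of $\{1,\dots,d\}$, which has absolute value at most $d-1<n$ (here the hypothesis $d\le n$ is used), forcing $l=l'$. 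Hence the $t$ edges are pairwise distinct, and the count is exactly $dr/n$.

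The argument is essentially routine once the bijection $(i,x)\mapsto i+dx$ and the reduction $dr=tn$ are in hand; the only point demanding genuine care is the last step, namely ensuring that the $t$ residue values do not collapse to fewer ring positions (equivalently, fewer edges), which is exactly where the hypothesis $d\le n$ enters.
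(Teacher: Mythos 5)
Your proposal is correct and follows essentially the same route as the paper: identify the relevant edges with the $x\in\ZZ_r$ for which $j\in D(x)$, use the bijection $(i,x)\mapsto i+dx$ onto $\{1,\dots,dr\}$ together with $dr=tn$ to locate the $t$ solutions $v=j+ln$, and read off $x_l=\lfloor(j+ln-1)/d\rfloor$. Your explicit verification that the $x_l$ are pairwise distinct (via $d\le n$) is a small point the paper leaves implicit, and it is a worthwhile addition since the lemma counts distinct edges rather than distinct pairs $(i,x)$.
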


\begin{proof}
	The cube neighbours of $\ax$ are precisely those $(\a + \mathbf{e}_j, x)$ such that $j \in D(x)$.  
	By (\ref{eq:grp}) we have $dr = tn$ for some positive integer $t$. Since $1 \le j \le n$ and $\{ i + dx: 1 \le i\le d, 0\le x<r\} = \cup_{l = 0}^{t-1}\ \{ln+1, \ldots, (l+1)n\}$ is the set of integers from $1$ to $tn$, there are exactly $t$ distinct pairs $(i, x)$ such that $j = i + dx\mod{n}$, namely $(i_l, x_l)$ defined by $x_l = \lfloor (j+ln-1)/d \rfloor$ and $i_l = j+ln -dx_l$, $0 \le l < dr/n$. From this and (\ref{eq:Fx}) the result follows. 
\end{proof}

In the special case when $r = n$, by Lemma \ref{lem:rpit}, there are exactly $d$ cube edges in each direction incident to any given $\a$-ring in $Q_n(d, n)$. Thus $Q_n(d, n)$ can be thought as a generalization of cube-connected cycles; we call it the \emph{$d$-ply cube-connected cycles of dimension $n$}.

\subsection{A larger family of networks}
\label{sec:larger}

We now justify that, under condition (\ref{eq:grp}), $\rcr$ is isomorphic to a recursive cube of rings in the sense of \cite{xie2013}, and vice versa. In \cite{sun2000}, a recursive cube of rings was defined to have vertex set $\ZZ_2^n\times \ZZ_r$ such that $ \ax$ is adjacent to $\by$ if and only if either $\a = \b$ and $x \equiv y \pm 1 \mod r$, or $x \equiv y \mod r$ and $\b = \a + \e_{j}$, where $j = n - x(n-d)-i$ if $n\ge i +  x(n-d)$ and $j = i - d x   \mod{n} $ otherwise for some $1 \le i \le d$. It was claimed in \cite{sun2000} that this is a Cayley graph. However, as shown in \cite{hu2005,xie2013}, in general this graph may not even be regular and so not even be vertex-transitive without condition (\ref{eq:grp}). A modified definition of a recursive cube of rings was given in \cite{xie2013}. We now restate this definition using a different language. $Q_n^-(d,r)$ below is precisely the graph $RCR$-$II(d ,r ,n-d )$ in \cite{xie2013}. 

\begin{dfn}
	\label{def:grcr}
	{\em Let $n \ge 2$, $d \ge 1$ and $r \ge 1$ be integers. Define $Q_n^-(d,r)$ to be the graph with  vertex set $\ZZ_2^n\times \ZZ_r$ such that $\ax$ and $\by$ are adjacent if and only if either $\a=\b$ and $x \equiv y \pm 1 \mod r$, or $\b=\a+\e_{i-dx}$ and $x \equiv y  \mod r$ for some $i$ with $1\le i\le d$.}
\end{dfn}

We call this graph a {\em general recursive cube of rings} and the edge between $\ax$ and $(\a + \e_{i-dx}, x)$ a `cube edge' with `direction' $\e_{i-dx}$. It is known that $Q_n^-(d,r)$ is connected if and only if $ d r \ge n$ \cite[Theorem 3]{xie2013}.   Note that (\ref{eq:grp}) is not required in the definition of $Q_n^-(d,r)$. In \cite[Theorem 9]{xie2013} it was shown that, if (\ref{eq:grp}) is satisfied, then $Q_n^-(d,r)$ is vertex-transitive. The next lemma asserts that under this condition $Q_n^-(d,r)$ is isomorphic to $Q_n(d,r)$ and hence is actually a Cayley graph. 

\begin{lem}
	\label{lem:isom}
	If $d r  \equiv 0\mod{n}$, then $Q_n^-(d,r) \cong \rcr$. 
\end{lem}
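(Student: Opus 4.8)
The plan is to write down a single explicit bijection between the common vertex set $\ZZ_2^n \times \ZZ_r$ of the two graphs and verify that it carries edges to edges. The two definitions agree on ring edges and differ on cube edges only in the sign of $dx$ in the direction at $\ax$: it is $\e_{i-dx}$ in $Q_n^-(d,r)$ but $\e_{i+dx}$ in $\rcr$. This suggests negating the ring coordinate while leaving the cube coordinate untouched, so the natural candidate is
\[
\sigma:\ \ZZ_2^n \times \ZZ_r \longrightarrow \ZZ_2^n \times \ZZ_r,\qquad \ax \longmapsto (\a,\, r-x),
\]
with $r-x$ read modulo $r$. This $\sigma$ is a bijection, and in fact an involution, which will let me run the adjacency check symmetrically.

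First I would dispose of the ring edges: since $\sigma$ fixes the first coordinate and sends $x$ to $r-x$, it turns $x \equiv y \pm 1 \pmod r$ into $r-x \equiv (r-y)\mp 1 \pmod r$, so ring edges go to ring edges and back. The substance is in the cube edges. Writing $y = r-x$ and invoking condition (\ref{eq:grp}) in the form $dx + dy = dr \equiv 0 \pmod n$, I get $i - dx \equiv i + dy \pmod n$, hence $\e_{i-dx} = \e_{i+dy}$ for every $i$ by the convention that subscripts of the $\e$'s are read modulo $n$. Consequently $\sigma$ maps the cube-neighbour set $\{(\a+\e_{i-dx}, x): 1 \le i \le d\}$ of $\ax$ in $Q_n^-(d,r)$ exactly onto $\{(\a+\e_{i+dy}, y): 1 \le i \le d\}$, which by Definition \ref{def:rcr}(a) is precisely the cube-neighbour set of $(\a,y)$ in $\rcr$. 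Thus $\sigma$ induces a bijection between the cube edges as well.

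Having shown that $\sigma$ sends every edge of $Q_n^-(d,r)$ to an edge of $\rcr$, I would finish by a counting remark. Both graphs have $2^n r$ vertices; $\rcr$ is $(d+2)$-regular by Lemma \ref{lem:basic}; and under (\ref{eq:grp}) the graph $Q_n^-(d,r)$ is vertex-transitive by \cite[Theorem 9]{xie2013}, with each vertex having $d$ distinct cube neighbours (the indices $i-dx$, $1\le i\le d$, are $d$ consecutive residues modulo $n$, hence distinct as $d \le n$) and two ring neighbours, so it too is $(d+2)$-regular. The two graphs therefore have equally many edges, and an edge-preserving bijection between finite graphs with the same number of edges is a graph isomorphism. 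Alternatively, since $\sigma$ is an involution and the relation $i-dx \equiv i+dy \pmod n$ is symmetric, one can check adjacency in both directions directly and skip the counting.

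The only delicate point is the set equality $\{\e_{i-dx}: 1 \le i \le d\} = \{\e_{i+dy}: 1 \le i \le d\}$, and this is exactly where condition (\ref{eq:grp}) is indispensable: without $dr \equiv 0 \pmod n$ the congruence $dy \equiv -dx$ fails, the two cube-neighbour sets no longer align, and, as noted in \cite{hu2005,xie2013}, $Q_n^-(d,r)$ need not even be regular, so no coordinate-wise isomorphism of this kind could exist. I expect the verification itself to be routine once the map is fixed; identifying the right map and pinning down precisely where (\ref{eq:grp}) enters is the conceptual core.
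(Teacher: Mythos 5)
Your proposal is correct and uses the same isomorphism as the paper, namely $(\a,x)\mapsto(\a,r-x)$, with condition (\ref{eq:grp}) entering exactly as in the paper to identify $\e_{i-dx}$ with $\e_{i+d(r-x)}$; the only difference is that you verify adjacency preservation directly, whereas the paper first realizes $Q_n^-(d,r)$ as a Cayley graph on the semidirect product with action $\theta_x(\a)=\a M^{-dx}$ and then applies the same map. This is essentially the same argument, and your direct verification (via the involution, rather than the edge-counting detour, which quietly leans on $r\ge 3$ through Lemma \ref{lem:basic}) is complete.
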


\begin{proof}
	Since $dr \equiv 0\ \mod{n}$, similar to (\ref{eq:action}) the rule $\theta_{x}(\a) = \a M^{-dx}$, $\a \in \ZZ_2^n$, $x \in\ZZ_r$, defines an action of $\ZZ_r$ on $\ZZ_2^n$. The operation of the corresponding semidirect product of $\ZZ_2^n$ by $\ZZ_r$ is given by $\ax  \by = (\a + \b M^{-dx}, x + y)$. It can be verified that the Cayley graph on this semidirect product with respect to the same connection set $S$ as in (\ref{eq:setS}) is exactly $Q_n^-(d,r)$. Moreover, the permutation of the set $\ZZ_2^n\times \ZZ_r$ defined by $(\a, x) \mapsto (\a, r-x)$ is an isomorphism from $Q_n^-(d,r)$ to $Q_n(d,r)$.  
\end{proof}

The next result shows that, if $r \ge 3$ and $ n \ge 2d $, then condition (\ref{eq:grp}) is necessary and sufficient for $Q_n^-(d,r)$ to be a Cayley graph. Therefore, all results in the rest of this paper are about $RCR$-$II(d ,r ,n-d )$ with $ dr \equiv 0\mod{n}$. 

\begin{thm}	
\label{thm:rcriicay}
Let $r\ge 3$ and $ n \ge 2d$. Then $Q_n^-(d,r)$ is a connected Cayley graph if and only if $d r  \equiv 0\mod{n}$. 
\end{thm}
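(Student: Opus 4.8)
The plan is to treat the two implications very asymmetrically, since one is essentially free. For sufficiency, if $dr\equiv0\pmod n$ then Lemma~\ref{lem:isom} gives $Q_n^-(d,r)\cong\rcr$, which is a Cayley graph by Definition~\ref{def:rcr} and is connected by Lemma~\ref{lem:basic} (here $r\ge3$ is used). So all the content lies in necessity, which I would prove in contrapositive form: assuming $Q_n^-(d,r)$ is a connected Cayley graph, it is connected (hence $dr\ge n$ by \cite[Theorem~3]{xie2013}) and vertex-transitive, and I would show $dr\equiv0\pmod n$. Equivalently, I would show that if $dr\not\equiv0\pmod n$, with $dr\ge n$, $r\ge3$ and $n\ge2d$, then $Q_n^-(d,r)$ is \emph{not} vertex-transitive.

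The structural input is the level decomposition: at level $x$ the cube edges flip the directions in the block $B_x=\{i-dx\bmod n:1\le i\le d\}$, a set of $d$ consecutive residues, and I set $c_j=|\{x\in\ZZ_r:j\in B_x\}|$. Since $\sum_j c_j=dr$ and, by (the argument of) Lemma~\ref{lem:rpit}, the $c_j$ are all equal to $dr/n$ exactly when $dr\equiv0\pmod n$, the hypothesis $dr\not\equiv0$ forces the multiset $\{c_j\}$ to be non-constant; the whole difficulty is converting this global imbalance into a genuine vertex invariant. As a warm-up I would count $4$-cycles through a vertex: the graph is triangle-free for $r\ge3$, and a short case check shows the number of $4$-cycles through $(\a,x)$ equals $\binom{d}{2}+|B_{x-1}\cap B_x|+|B_x\cap B_{x+1}|$ (plus a term equal to $1$ iff $r=4$), independent of $\a$. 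Consecutive blocks are disjoint except possibly across the single seam between levels $r-1$ and $0$, so vertex-transitivity already forces $|B_{r-1}\cap B_0|=0$. This removes the first-order defect but is not enough: the graph $Q_3^-(1,5)$ has vanishing seam overlap yet $dr=5\not\equiv0\pmod 3$ (and is easily seen to be non-vertex-transitive by counting $6$-cycles), so short cycles cannot see the imbalance.

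To finish I would promote the seam defect to all scales. For $d\ge2$ and $r\ne4$ the $4$-cycle counts above also distinguish cube edges from ring edges, so $\Aut(X)$ preserves this dichotomy; the $2^n$ $\a$-rings then form a block system which, by vertex-transitivity, is permuted transitively. Hence the setwise stabiliser of a fixed ring acts transitively on its $r$ vertices and therefore contains a rotation by some $t$ with $\gcd(t,r)=1$. Tracking what this automorphism does to the cube-neighbours of that ring shows that it relabels directions by a \emph{single} permutation $\pi$ of $\ZZ_n$ with $\pi(B_x)=B_{x+t}$ for all $x$ (the point being that a whole adjacent ring must map to one ring, which forces the relabelling to be level-independent). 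Consequently $|B_x\cap B_{x+m}|=|B_{x+t}\cap B_{x+t+m}|$ for all $x,m$, and since $\gcd(t,r)=1$ this makes $|B_x\cap B_{x+m}|$ independent of $x$ for every $m$.

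The endgame is then number-theoretic. Writing $s=dr\bmod n$, the overlap of two blocks $m$ levels apart is $f(dm\bmod n)$ when the pair avoids the seam and $f((dm-s)\bmod n)$ when it straddles it, where $f(\delta)=\max(0,\,d-\min(\delta,n-\delta))$ is the explicit interval-overlap ``tent'' function; $x$-independence forces $f(dm\bmod n)=f((dm-s)\bmod n)$ for all admissible $m$. Using $n\ge2d$ (so $f$ is genuinely supported near $0$) and $dr\ge n$ (so enough shifts $dm\bmod n$ are available), an elementary case analysis produces an $m$ violating this equality unless $s=0$ — for instance $m=1$ already handles $0<s<2d$ — giving the contradiction. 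I expect the \textbf{main obstacle} to be precisely this passage beyond $4$-cycles: since the imbalance is invisible to short cycles, one must make the overlap numbers $|B_x\cap B_{x+m}|$ into honest automorphism-invariants for \emph{all} $d$. This requires either the rotation/relabelling argument above — whose prerequisite, that automorphisms respect the ring/cube dichotomy, is genuinely delicate in the degenerate cases $d=1$ and $r=4$, where a direct count of the shortest direction-repeating (``ladder'') cycles must replace it — or, equivalently, a clean way to isolate the $(2m+2)$-cycles realising $|B_x\cap B_{x+m}|$ from all other cycles of that length.
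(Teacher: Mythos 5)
Your sufficiency direction coincides with the paper's (Lemma~\ref{lem:isom} plus Lemma~\ref{lem:basic}); all the content is in necessity, where you take a genuinely different route. The paper never analyses $\Aut(Q_n^-(d,r))$ structurally: it fixes the two vertices $(\bo,r-1)$ and $(\bo,\lfloor(r-1)/2\rfloor)$, extracts an integer $t$ from a combinatorial analysis of the sets $D^{-}_x$ (your $B_x$), and shows these two vertices have different numbers of $t$-neighbours --- an argument that is uniform in $d$ and needs no information about how automorphisms act on edges. Your plan instead derives structure from a hypothetical transitive automorphism group (edge dichotomy, the ring block system, a transitive subgroup of the dihedral group on a ring) and reduces everything to the arithmetic statement that $|B_x\cap B_{x+m}|$ cannot be independent of $x$ for all $m$ unless $dr\equiv 0\pmod n$. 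Your endgame is sound and in fact cleaner than you suggest: writing $q=(-dr)\bmod n$, the case $m=1$ forces $q\le n-2d$, and then $m=\lceil n/d\rceil$ (which is at most $r-1$ whenever $m=1$ has not already produced a contradiction) gives $f(dm\bmod n)=d-k$ with $0\le k\le d-1$, while $(k+q)\bmod n=k+q\le n-d-1$ yields $f(k+q)\in\{0\}\cup\{d-k-q\}$, which is strictly smaller --- so no further case analysis is needed. Two intermediate claims need repair but are repairable: a transitive subgroup of the dihedral group of order $2r$ need not contain a rotation coprime to $r$ (for $r$ even it can be generated by the square of a rotation and a reflection), but rotation by $2$ together with a reflection still forces a same-parity straddling/non-straddling pair to carry equal overlaps, which is all the endgame uses; and the graph is not triangle-free when $r=3$ (each ring is a triangle), though this does not alter your $4$-cycle count.

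The genuine gap is the one you flag yourself, and it is not a corner case. The entire block-system machinery rests on automorphisms preserving the ring/cube dichotomy, and your $4$-cycle invariant (cube edges lie on $d-1$ four-cycles, ring edges on $[r=4]$, once the seam overlap is killed) fails exactly when $d=1$, $r\ne 4$ (both counts are $0$) and when $d=2$, $r=4$ (both are $1$). The latter is three finite graphs ($n\in\{5,6,7\}$), but $d=1$ is an infinite family containing the most classical instances, and ``count the shortest ladder cycles'' is not yet an argument: for $d=1$ with $dr\not\equiv 0\pmod n$ the ring-gap between consecutive levels carrying a fixed direction $\e_j$ genuinely depends on $j$, the girth cycle may be the ring itself when $r$ is small, and one must still show that some cycle count separates the two edge classes (or, bypassing the dichotomy, separates two vertices directly --- e.g.\ in $Q_3^-(1,5)$ the vertex $(\bo,0)$ lies on two $6$-cycles while $(\bo,2)$ lies on none). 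Until that is supplied, your proof is incomplete precisely where the paper's neighbour-counting argument goes through uniformly in $d$; this is presumably why the paper chose to compare two vertices rather than two edge classes.
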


A proof of this result will be given in Appendix \ref{app:prfrcrii}.

\section{Shortest paths in $\rcr$}  
\label{sec:shrtpth}

Since $\rcr$ is a Cayley graph, for any $\ax, \by \in G$, if
$$
P_{\ax} : \oo,(\a_1,x_1 ),\ldots,(\a_l,x_l) = \ax
$$
is a path from $ \oo $ to $ \ax$, then
$$
\by P_{\ax}  : \by, \by(\a_1,x_1),\ldots, \by(\a_l,x_l)
$$
is a path from $\by$ to $\by \ax$. Moreover, the former is a shortest path if and only if the latter is a shortest path. Therefore, to find a shortest path between any two vertices, it suffices to find a shortest path from $\oo$ to any $\ax \in G$. This is what we are going to do in this section. 
 
Suppose that $ P$ is a path in $\rcr$ from $ \oo$ to $\ax$ with $ s $ cube edges. Removing these $ s $ cube edges from $ P $ results in $s+1$ subpaths, each of which is a path in a ring and is called a \emph{segment}. Such a segment may contain only one vertex, and this happens if and only if this vertex 
is incident to two cube edges or it is $ \oo$ or $\ax$ and incident to a cube edge on $P$. The first segment must be on the $\bo$-ring, say from $(\bo, 0)$ to $(\bo, x_1)$ for some $x_1 \in \ZZ_r$. If the cube edge on $P$ incident to $(\bo, x_1)$ is in direction $\e_{i_1}$, then the second segment must be on the $\e_{i_1}$-ring from $(\e_{i_1}, x_1)$ to, say, $(\e_{i_1}, x_2)$. In general, for $1 \le t \le s+1$, we may assume that the $t$th segment is on the $(\e_{i_1} + \cdots + \e_{i_{t-1}})$-ring connecting $(\e_{i_1} + \cdots + \e_{i_{t-1}}, x_{t-1})$ and $(\e_{i_1} + \cdots + \e_{i_{t-1}}, x_{t})$ for some $i_1, \ldots, i_{t-1} \in \{1, \ldots, n\}$ and $ x_{t-1}, x_{t} \in \ZZ_r$, where $\e_{i_0}$ is interpreted as $\bo$ and $x_0 = 0$. This implies that, for $1 \le t \le s$, the $t$th cube edge on $P$ is in direction $\e_{i_t}$ and it connects $(\e_{i_1} + \cdots + \e_{i_{t-1}}, x_{t})$ and $(\e_{i_1} + \cdots + \e_{i_{t-1}} + \e_{i_{t}}, x_{t})$ (see Figure \ref{fig:seg}). By the definition of $\rcr$, we have $i_t \in D({x_t})$ for $1 \le t \le s$. So every path $P$ in $\rcr$ from $ \oo$ to $\ax$ determines two tuples, namely, 
$ (x_0, x_1, \ldots, x_s, x_{s+1})$ and $ (i_1, \ldots, i_s),$ where $x_0 = 0$, $x_{s+1} = x$ and $\e_{i_1} + \cdots + \e_{i_{s}} = \a$. Conversely, any two tuples 
\begin{equation}
\label{eq:seqs}
	 \hat{x} = (x_0, x_1, \ldots, x_s, x_{s+1}),\; \hat{i} = (i_1, \ldots, i_s), 
\end{equation}
 such that $i_t \in D({x_t})$ for each $t$, $x_0 = 0$, $x_{s+1} = x$ and $\e_{i_1} + \cdots + \e_{i_{s}} = \a$, give rise to $2^{s+1}$ paths in $\rcr$ from $ \oo$ to $\ax$ with $ s $ cube edges and $s+1$ segments, because the $t$th segment can be one of the two paths from $(\e_{i_1} + \cdots + \e_{i_{t-1}}, x_{t-1})$ to $(\e_{i_1} + \cdots + \e_{i_{t-1}}, x_{t})$ on the $(\e_{i_1} + \cdots + \e_{i_{t-1}})$-ring.   
If we choose the shorter of these two paths for every $t$, then we get a path from $ \oo$ to $\ax$ with shortest length among all these $2^{s+1}$ paths, and this shortest length is $s + l(\hat{x})$ (which is independent of $\hat{i}$), where we define
\[ 
	l( \hat{x}) := \sum_{t=1}^{s+1}  \min\{ |x_{t} - x_{t-1}| , r- |x_{t} - x_{t-1}| \}.
\]   

If a path contains two cube edges with the same direction $\e_i $, then it has a subpath of the form $ (\b,y_0),(\b+\e_i,y_0), \ldots,(\b+\e_i,y_1), (\b+\e_i+\e_{j_1},y_1),\dots,(\b + \e_i + \e_{j_1}+\dots+\e_{j_t},y_t), (\b + \e_{j_1}+\dots+\e_{j_t},y_t)$, and by replacing this subpath with $ (\b,y_0), \ldots,(\b ,y_1),(\b+ \e_{j_1},y_1),\dots,(\b + \e_{j_1}+\dots+\e_{j_t},y_t) $ we obtain a shorter path with the same end-vertices. Therefore, any shortest path from $ \oo $ to $ \ax $ contains exactly one cube edge in direction $ \e_i $ if $ a_i =1 $ and no cube edge in direction $ \e_i $ if $ a_i = 0 $. Thus the number of cube edges in any shortest path from $ \oo $ to $ \ax $ is equal to $\|\a\|$, where $$ \| \a \| := \sum_{i=1}^{n} a_i$$ is the Hamming weight of $\a$. 
 
Define an \emph{$\ax$-sequence} to be a tuple $\hat{x} = (x_0, x_1, \ldots, x_s, x_{s+1})$ with $x_t \in \ZZ_r$ for each $t$ such that $x_0 = 0$, $x_{s+1} = x$, $ s=\|\a\|$,  and for every $i$ with $a_{i}=1$ there is a unique $ t $ with $i \in D({x_t})$. Denote 
\begin{equation}
\label{eq:Lax}
	l\ax :=  \min_{\hat{x}} l( \hat{x}) , 
\end{equation}
with the minimum running over all $\ax$-sequences $\hat{x}$. An $\ax$-sequence achieving the minimum in \eqref{eq:Lax} is said to be \emph{optimal}. Denote by $ \dist(\oo,\ax) $ the distance between $ \oo $ and $ \ax $ in $ \rcr $. The discussion above implies the following results. 
\begin{lem}
\label{lem:axSeq}
	\begin{enumerate}[\rm (a)]
		\item Any $\ax$-sequence $\hat{x}= (x_0, x_1, \ldots, x_s, x_{s+1})$ and any $\hat{i} = (i_1, \ldots, i_s)$ such that $  a_{i_t} =1 $ and $i_t \in D({x_t})$ for $1 \le t \le s$  and $ s=\|\a\| $, give rise to $ 2^{s+1} $ paths  in $ \rcr $ from $ \oo $ to $ \ax $.
		\item The minimum length among the paths obtained from  $\hat{x}$ and $\hat{i} $  is equal to $ \| \a \| + l( \hat{x})$.
		\item $\dist(\oo,\ax) = \|\a\| + l\ax $. 
	\end{enumerate}
\end{lem}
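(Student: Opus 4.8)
The plan is to treat the three parts as a consolidation of the segment-decomposition analysis developed just above the statement: parts (a) and (b) record the explicit forward construction of paths from a position sequence, while part (c) packages a matching upper and lower bound on $\dist(\oo,\ax)$. So the proof is mostly bookkeeping on top of the preceding discussion, with one genuinely delicate reconciliation needed for the lower bound.

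For part (a), given an $\ax$-sequence $\hat{x}=(x_0,\dots,x_{s+1})$ and a compatible $\hat{i}=(i_1,\dots,i_s)$, I would build a walk by starting at $\oo=(\bo,0)$, traversing the $\bo$-ring from position $x_0=0$ to $x_1$, taking the cube edge in direction $\e_{i_1}$ (legitimate since $i_1\in D(x_1)$) to reach $(\e_{i_1},x_1)$, traversing the $\e_{i_1}$-ring to $x_2$, and so on, ending at $(\e_{i_1}+\cdots+\e_{i_s},x_{s+1})$. Because $a_{i_t}=1$ with the $i_t$ distinct and $s=\|\a\|$, the indices $i_1,\dots,i_s$ are exactly those $i$ with $a_i=1$, so $\e_{i_1}+\cdots+\e_{i_s}=\a$ and the endpoint is $\ax$. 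The structural point to verify is that this is a genuine path: the partial sums $\e_{i_1}+\cdots+\e_{i_{t-1}}$ are pairwise distinct (adding distinct unit vectors yields a strictly growing chain of coordinate subsets), so the $s+1$ segments lie on $s+1$ distinct rings and no vertex or edge repeats. Since each ring segment can be traversed along either of the two arcs of its ring, the construction yields $2^{s+1}$ paths, giving (a). For (b), the length of each such path is its cube-edge count $s=\|\a\|$ plus the total length of its $s+1$ ring segments; minimizing arc by arc, the shorter of the two arcs joining $(\cdots,x_{t-1})$ and $(\cdots,x_t)$ on a cycle of length $r$ has length $\min\{|x_t-x_{t-1}|,\,r-|x_t-x_{t-1}|\}$, so the minimum over the $2^{s+1}$ paths is exactly $\|\a\|+l(\hat{x})$.

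Part (c) then follows by combining two inequalities. For the upper bound, every $\ax$-sequence produces, via (a) and (b), a path of length $\|\a\|+l(\hat{x})$, so minimizing over all $\ax$-sequences gives $\dist(\oo,\ax)\le\|\a\|+l\ax$. For the lower bound I would argue that any path from $\oo$ to $\ax$ uses at least $\|\a\|$ cube edges, since reaching first coordinate $\a$ from $\bo$ forces each coordinate $i$ with $a_i=1$ to be flipped an odd (hence positive) number of times while each cube edge flips exactly one coordinate and each ring edge flips none; moreover, as shown in the discussion, a path with a repeated direction can be strictly shortened, so a shortest path has exactly $\|\a\|$ cube edges and uses each relevant direction once. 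Reading off the segment positions of such a path yields a position sequence $\hat{x}$ whose ring cost is at least $l(\hat{x})$, so its length is at least $\|\a\|+l(\hat{x})\ge\|\a\|+l\ax$.

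The step I expect to be the main obstacle is precisely this last one: showing that the position sequence extracted from an arbitrary shortest path is an admissible $\ax$-sequence, so that its cost is genuinely bounded below by the minimum $l\ax$ rather than by some smaller quantity. This requires reconciling the ``unique $t$ with $i\in D(x_t)$'' clause in the definition with the fact that, by Lemma \ref{lem:rpit}, a given direction $\e_i$ lies in $dr/n$ of the windows $D(x)$; one must check that a shortest path places each relevant direction at a well-defined segment position and that restricting the minimization to sequences satisfying the uniqueness clause does not cause $l\ax$ to overshoot the true distance. Everything else is the bookkeeping already carried out before the statement.
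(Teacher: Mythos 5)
Your proposal is correct and takes essentially the same route as the paper, which does not give a separate proof of this lemma but derives it directly from the segment-decomposition discussion preceding its statement (forward construction of the $2^{s+1}$ paths, arc-by-arc minimization, and the direction-repetition shortening argument for the lower bound). The obstacle you flag at the end is not a genuine one: the decomposition of a shortest path already supplies the companion tuple $\hat{i}$ witnessing that each direction $\e_i$ with $a_i=1$ is used at exactly one segment position $x_t$, which is precisely what the definition of an $\ax$-sequence (read as the existence of such an assignment, as the paper's subsequent usage makes clear) requires, so the extracted sequence is admissible and $l(\hat{x})\ge l\ax$ closes the lower bound.
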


\begin{figure}
\centering
\includegraphics[scale=.45]{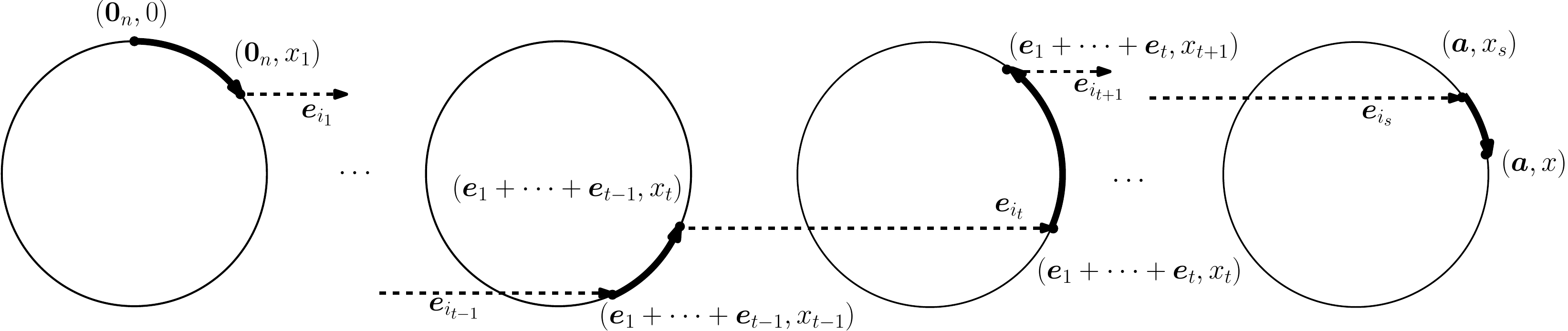}
\caption{Segments of a path from $ \oo $ to $ \ax $}
\label{fig:seg}
\end{figure}

In the rest of this section, we give a method for finding optimal $\ax$-sequences (or equivalently shortest paths from $\oo$ to $\ax$). We need to handle the cases $dr=n$ and $dr \ge 2n$ separately because for each $ i $, by Lemma \ref{lem:rpit}, $ x\in \ZZ_r $ with $ i \in D(x) $ is unique in the former but not in the latter. 

\subsection{Case $dr = n$} 

In this case, by Lemma \ref{lem:rpit}, for every $1 \le i \le n$ there is a unique $ y \in  \ZZ_r$  such that  $ i \in D(y)$. Hence any $ \ax $-sequence can be obtained from any other $ \ax $-sequence by permuting entries (while fixing the first and last entries). So a sequence $ (y_0, y_1, \dots, y_{s+1}) $ satisfying $y_{t-1} \le y_{t}$, $1 \le t \le s+1$, is obtained by reordering the entries of any $ \ax $-sequence. This sequence is uniquely determined by $ \ax $, with $ y_0=0 $ and $ x=y_{t^*} $ for some $0 \le  t^* \le s+1 $. If $ y_{t^*}<y_{s+1} $, then $ (y_0, y_1, \dots, y_{s+1}) $ is not an $ \ax $-sequence as $ y_{s+1}\ne x $. Denote 
\begin{equation}
\label{eq:yHax}  
	\hat{y}_{\a,x} := (y_0,y_1,\dots, y_{s+1}, y_{s+2}) ,\;y_{s+2} = r. 
\end{equation}
 Define 
\[
\leap_1\ax  
:=\left\{
\begin{array}{l l}
\max\{y_{t} - y_{t-1} : 1\le t \le t^* \}, & x \ne 0 , \\ [.2cm]
0 ,   & x=0  \\ 
\end{array} \right .
\] 
\[
\leap_2\ax 
:= \max\{ y_{t } - y_{t-1}: t^*+1 \le t \le s+2 \}.
\] 
Since $ y_{s+2}>y_{s+1} $, $\leap_2\ax \ge 1$, and if $ x\ne 0 $, then $\leap_1\ax \ge 1$. 
Choose $ t $ with $ 1\le t\le t^* $ such that $ y_t-y_{t-1} = \leap_1\ax $ when $x\ne0 $, and $ t=0 $ when $ x=0 $. Now let 
\begin{equation}
\label{eq:xH1}
	\hat{x}^1 = ( y_0,y_1, \dots, y_{t-1}, y_s, \dots, y_{t+1},y_{t},x),
\end{equation}
with assumption that $ y_{t-1}= 0 $ when $ t=0 $.
Choose $ t $ with $ t^*+1\le t \le s+2 $ such that $ y_t-y_{t-1} = \leap_2\ax $, and let
\begin{equation}
\label{eq:xH2}
	\hat{x}^2 =  ( y_0,y_s,\dots, y_{t},y_1,y_2,\dots ,y_{t-1},x ). 
\end{equation} 
(See Figure \ref{fig:drnseq} for an illustration.) It is clear that $ \hat{x}^1 $ and $ \hat{x}^2 $  are $ \ax $-sequences. There exists $ \hat{i}^1 =(i_1,i_2,\ldots,i_{t-1},i_s,\ldots,i_t) $ which together with $ \hat{x}^1 $ satisfies \eqref{eq:seqs}. A path from $ \oo $ and $ \ax $ can be obtained from $ \hat{x}^1 $ and $ \hat{i}^1$ as described above, whose length is $ \|\a\| + l(\hat{x}^1) $ by Lemma \ref{lem:axSeq}. Similarly, a path from $ \oo $ and $ \ax $ can be obtained from  $ \hat{x}^2 $ and $ \hat{i}^2 = (i_s,i_{s-1},\ldots,i_{t},i_1,\ldots,i_{t-1})$, whose length is $ \|\a\| + l(\hat{x}^2) $.
We now show that either $ \hat{x}^1 $ or $ \hat{x}^2 $ is an optimal $ \ax $-sequence and so $ l\ax=\min\{ l(\hat{x}^1),l(\hat{x}^2) \} $.

\begin{thm} \label{thm:Pax1}
 Suppose $dr =n$. Then
\begin{equation}
\label{eq:lax1}
 l\ax = \min \{ r + x - 2 \leap_1 \ax, r +(r- x)  - 2 \leap_2\ax \} 
\end{equation}
and so
\begin{equation}
\label{eq:LpaxCase1}
	\dist(\oo,\ax) = \|\a \|  + \min \{ r + x - 2 \leap_1 \ax, r +(r- x)  - 2 \leap_2\ax \}.
\end{equation}
Moreover, if $l(\hat{x}^1) \le l(\hat{x}^2)$ (respectively, $ l(\hat{x}^2)\le l(\hat{x}^1)  $), then $ \hat{x}^1 $ (respectively, $ \hat{x}^2 $) is an optimal $ \ax $-sequence.
\end{thm}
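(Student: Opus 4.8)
The plan is to turn the computation of $l\ax$ into a combinatorial optimization over the cycle $\ZZ_r$. Because $dr=n$, Lemma \ref{lem:rpit} shows an $\ax$-sequence is determined up to permutation of its interior entries, so sorting the entries of $\hat{y}_{\a,x}$ yields fixed data $0=y_0\le y_1\le\cdots\le y_{s+1}$ together with $y_{s+2}=r$; these cut $\ZZ_r$ into $s+2$ elementary arcs $A_1,\dots,A_{s+2}$ of lengths $g_t=y_t-y_{t-1}$ with $\sum_t g_t=r$. The first step is to note that each ring-segment of a path counted in Lemma \ref{lem:axSeq} runs along the shorter arc between two successive entries of the sequence, and that shorter arc is a union of consecutive elementary arcs; hence the total ring-length is exactly $\sum_t c_t g_t$, where $c_t\ge 0$ counts how often $A_t$ is covered. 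Conversely, a vector $(c_t)$ comes from an $\ax$-sequence precisely when the multigraph with $c_t$ copies of $A_t$ spans $y_0,\dots,y_{s+1}$, is connected, and admits an Eulerian trail from $0$ to $x$. Thus $l\ax=\min\sum_t c_t g_t$ over all admissible vectors.

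The second step extracts the constraints on $(c_t)$. The Eulerian condition forces $\deg(y_k)=c_k+c_{k+1}$ to be even except at the two endpoints $y_0=0$ and $y_{t^*}=x$, where it is odd. Propagating these parities around the cycle shows that all lower arcs $A_1,\dots,A_{t^*}$ (those in $[0,x]$) share one parity while all upper arcs $A_{t^*+1},\dots,A_{s+2}$ (those in $[x,r]$) share the opposite parity. This splits the problem into exactly two cases: lower arcs odd and upper even, or vice versa. (When $x=0$ the two special vertices coincide, giving instead the closed-trail condition; I would treat this as the degenerate case, consistent with $\leap_1\ax=0$.)

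The third step minimizes $\sum_t c_t g_t$ in each case. In the case where the upper arcs are even, every lower arc must be $\ge 1$ and every upper arc must be $0$ or $\ge 2$. Here connectivity is the decisive constraint: since all lower arcs are present, the arcs set to $0$ all lie in the upper segment, and deleting two or more edges from the cycle $A_1,\dots,A_{s+2}$ disconnects it. Hence at most one upper arc may vanish, so the cheapest admissible vector sets every lower arc to $1$, sets the largest upper arc (of length $\leap_2\ax$) to $0$, and the rest to $2$, giving $\sum_t c_t g_t = x + 2\bigl((r-x)-\leap_2\ax\bigr)=r+(r-x)-2\leap_2\ax$. The symmetric case yields $r+x-2\leap_1\ax$, and taking the smaller of the two proves \eqref{eq:lax1}; then \eqref{eq:LpaxCase1} follows from Lemma \ref{lem:axSeq}(c).

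Finally, for the `moreover' assertion I would verify that $\hat{x}^1$ and $\hat{x}^2$ realize exactly these two optimal crossing patterns ($\hat{x}^1$ skipping the largest lower gap, $\hat{x}^2$ the largest upper gap), so that $l(\hat{x}^1)=r+x-2\leap_1\ax$ and $l(\hat{x}^2)=r+(r-x)-2\leap_2\ax$; whichever is smaller then attains $l\ax$ and is optimal. I expect the main obstacle to be the lower-bound half — namely establishing the "at most one arc may be skipped" connectivity bound and correctly reading off the Eulerian parities to obtain the clean two-case split. Confirming that the explicit sequences $\hat{x}^1,\hat{x}^2$ hit these patterns (i.e.\ that each segment is routed along the intended short arc) is routine but must be carried out to pin down the equalities needed for the optimality claim.
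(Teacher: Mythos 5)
Your proposal is correct and follows essentially the same route as the paper: both reduce $l\ax$ to the length of a walk from $0$ to $x$ on the cycle $C_r$ cut into elementary arcs by the sorted entries $y_0\le\cdots\le y_{s+1}$, observe that at most one arc can be omitted, split into the two cases according to whether the omitted arc lies in $[0,x]$ or $[x,r]$, and finish by checking that $\hat{x}^1$ and $\hat{x}^2$ realize the two resulting bounds. Your Eulerian-parity and connectivity argument is a more explicit justification of what the paper simply asserts in its Case 2 (that exactly one elementary arc is missing from the walk), but the decomposition and the key quantities $\leap_1\ax$, $\leap_2\ax$ are used identically.
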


\begin{figure}
\begin{center}
\includegraphics[scale=.45]{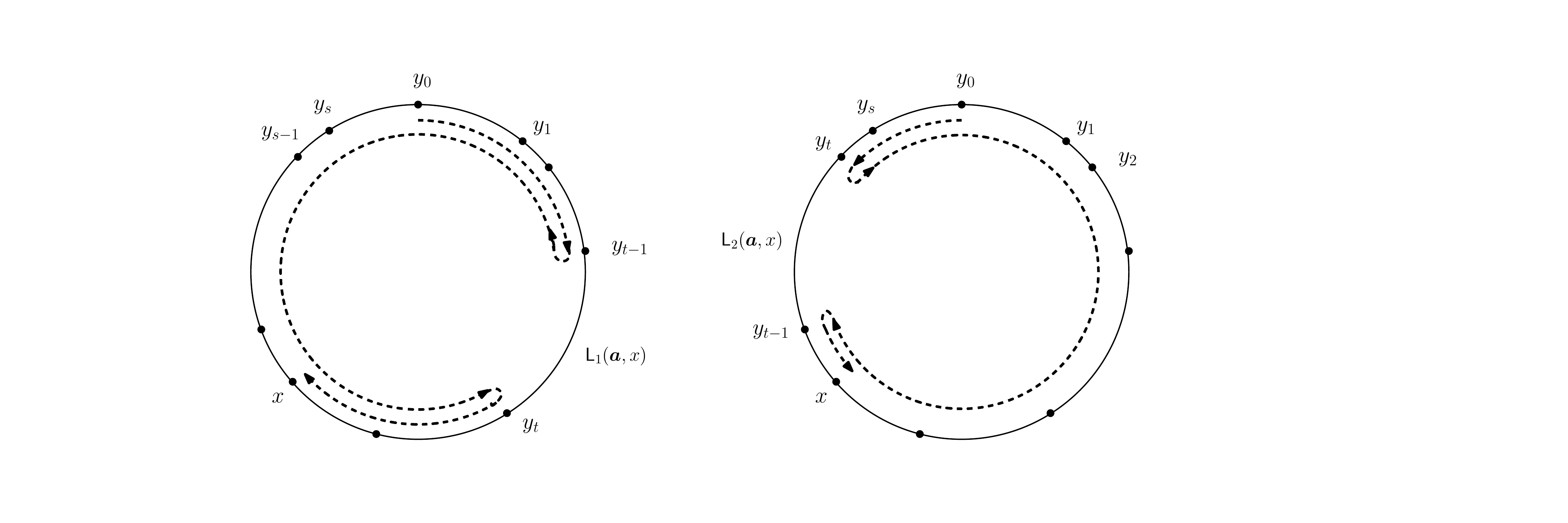} 
\end{center}
\caption{Case $ dr=n $: $ (\a,x) $-sequences $\hat{x}^1$ and  $\hat{x}^2$}
\label{fig:drnseq}
\end{figure}

\begin{proof} 	
	Let $ \hat{x} =(x_0,x_1,\dots,x_{s+1}) $ be an arbitrary $ \ax $-sequence, where $ x_0=0 $, $ x_{s+1} =x$ and $ s=\|\a\| $. From the discussion above, the sequence  $ (y_0,y_1,\dots,y_{s+1}) $ is obtained by reordering the entries of $ \hat{x} $ such that $y_{t-1} \le y_{t}$, for $1 \le t \le s+1$. Let $ C_r $ be the cycle with vertex set $\{ 0,1,2,\ldots,r-1\}$ and edges joining $ 0 $ and $ 1 $, $ 1 $ and $ 2 ,\ldots$, $ r-1 $ and $ 0 $. Any path $ P $ from $ \oo $ to $ \ax $ by using $ \hat{x} $ with minimum length gives rise to a walk $ W $ from $ 0 $ to $ x $ on $ C_r $, obtained by treating each segment of $ P $ as a path on $ C_r $. The length of $ W $ is equal to $ l(\hat{x}) $.
	
	\vspace{.2cm}
	\textsf{Case 1:}~$ W $ contains all edges of $ C_r $. In this case we have $ l(\hat{x}) \ge  \min\{r + x , 2 r - x \} $.
	
	\vspace{.2cm}	
	\textsf{Case 2:}~At least one edge of $ C_r $ is not contained in $ W $. In this case there is exactly one $ t $ with $ 1 \le t \le s+2 $ such that the path $y_{t-1}, y_{t-1}+1,\ldots, y_{t} -1, y_{t} $ is not in $ W $. Conversely, for any $ 1 \le t \le s+2 $, there is a walk $ W $ as above which does not use the path $y_{t-1}, y_{t-1}+1,\ldots, y_{t} -1, y_{t} $. If $ x \ge y_{t }$, then $  l(\hat{x}) \ge 2y_{t-1} + (r- y_{t }) + (x-y_{t }) = r + x- 2 (y_{t}-y_{t-1})$;  if $x \le y_{t-1}$, then $l(\hat{x}) \ge 2(r - y_{t }) + y_{t-1} + (y_{t-1} - x) = 2r - x- 2 (y_{t}-y_{t-1})$.  By the definition of $ \leap_1 \ax $ and $ \leap_2\ax $, the smallest lower bound for $ l(\hat{x}) $ obtained in \textsf{Case 2} is 
\begin{equation}
	\label{eq:LB4lx}
		 l(\hat{x}) \ge \min \{ r+ x-2\leap_1\ax, 2r-x-2\leap_2\ax \} .  
\end{equation}

	Since $ \leap_1\ax \ge 0 $ and $ \leap_2\ax \ge 1 $, we have $ \min \{ r+x , 2r -x \} \ge \min\{ r+x-2\leap_1\ax,2r-x-2\leap_2\ax \} $. In addition, for $ \hat{x}^1 $ and $ \hat{x}^2 $ defined in \eqref{eq:xH1} and \eqref{eq:xH2}, respectively, we have $ l(\hat{x}^1) \le r+ x  - 2 \leap_1\ax $ and $ l(\hat{x}^2) \le 2r-x - 2 \leap_2\ax $. Therefore, by \eqref{eq:LB4lx}, $ l\ax = \min \{ l(\hat{x}^1), l(\hat{x}^2) \} =\min \{ r+ x  - 2 \leap_1\ax, 2r- x - 2\leap_2\ax \} $, which together with Lemma \ref{lem:axSeq} implies  \eqref{eq:LpaxCase1}. Moreover, $ \hat{x}^j $ with $ l(\hat{x}^j) = \min\{ l(\hat{x}^1),l(\hat{x}^2) \} $, $ j\in\{1,2 \}$, is an optimal $ \ax $-sequence.
\end{proof}  

We remark that $ l(\hat{x}^1) \le r+ x  - 2 \leap_1\ax$ and equality holds if $ \hat{x}^1 $ is an optimal $ \ax $-sequence. Similarly, $ l(\hat{x}^2) \le 2r- x -2 \leap_2\ax$ and equality holds if $ \hat{x}^2 $ is an optimal $ \ax $-sequence.


\begin{rem*}
{\em
In the special case when $ d=1 $ and $ r=n $ (that is, when $ \rcr = CC_n $), Theorem \ref{thm:Pax1} gives rise to \cite[Lemma 1]{shah2001}. 
}
\end{rem*}

\subsection{Case $ d r \ge 2 n $} 
\label{sec:dr2n}
 
Given $ \ax \in G $, let $ \hat{i} = (i_1,i_2,\ldots,i_s) $ be such that $ a_{i_t}=1 $ for  $ 1\le t \le s$ and $ i_{1}< i_2< \cdots <i_{s} $, where $s=\| \a \| $. Since $ dr /n \ge 2 $, by Lemma \ref{lem:rpit} applied to $ j=i_1,i_2,\ldots,i_s $, there exist two $ \ax $-sequences 
\begin{equation}
\label{eq:yzhat} 
\hat{y} = (y_0,y_1,\ldots,y_s,y_{s+1}), \; \hat{z} = (z_{0},z_1,z_{2},\ldots,z_{s},z_{s+1})  
\end{equation}
such that $ y_t = \lfloor (i_t-1)/d \rfloor $, $ z_t = \lfloor (i_t+dr-n-1)/d \rfloor $ and $ i_t \in D({y_t}) \cap D({z_t}) $ for $ 1\le t \le s$. It is clear that $ 0\le y_{t-1} \le y_t \le \lfloor (n-1)/d\rfloor = \nd-1 $ and $r-\nd \le z_{t-1} \le z_t \le r-1  $ for $ 2\le t \le s $.   
Denote $ k = \nd d -n $. Then $ 0\le k\le d-1$. Since $ (i_t -d)/d \le y_t \le (i_t -1 )/d$,  we have $ i_t= k_t + d y_t  $ for some $1\le  k_t \le d$ and every $ 1\le t \le s $. Therefore, $ z_t = \lfloor ( k_t + d y_t +dr-n - 1)/d \rfloor $ and so
\begin{equation}
\label{eq:yz}
z_t = y_t +r -\nd + q_t, \; 1\le t \le s,  
\end{equation}
where $q_t  = \lfloor (k_t+k - 1)/d \rfloor =0 $ or  $1$. Note that if $ n \equiv 0 \mod{d} $, then $ q_t=0 $ for every $ t $. In the following we show how to obtain an optimal $ \ax $-sequence from $ \hat{y} $ and $ \hat{z} $.  

In the case when $ x\le \rhlf $, if $ x<y_s $, then let $ h $, $ 1 \le h \le s$, be such that $ y_{h-1} \le x <y_{h} $.  
Define    
\[ \label{eq:gapax1}
\leap_1 \ax  
:=\left\{
\begin{array}{l l} 
	\max \{ y_{h} -x +q_{h},y_{j} - y_{j-1} +q_{j},\nd\!-y_s :\!h< j \le s    \}, &   x < y_s ,\\ [.2cm]
	\nd - x ,   &   y_{s} \le x \le \rhlf.\\ 
\end{array} \right.  
\]
Similarly, if $ x > \rhlf $ and $ x> z_1 $, then let $ 1 \le l \le s$ be such that $ z_{l} < x \le z_{l+1} $. Define    
\[ \label{eq:gapa	x2}
\leap_2 \ax  
:=\left\{
\begin{array}{l l} 
\!	\max \{ x- z_{l} +\! q_{l},z_{j+1}\! -\!z_{j } +q_{j },  z_1 \!-\! r+\nd\! :\! 0 \le j \le l \}, &  x >\! z_1 ,\\ [.2cm]
	\!	\nd - (r-x),   &   z_{1} \ge x > \rhlf.\\ 
\end{array} \right.  
\] 
If $ x < y_{s} $, then $ \leap_1\ax \ge  y_{h} -x+q_{h} \ge 1$; if $ x>z_1 $, then $ \leap_2\ax \ge  z_{l} -x+q_{l} \ge 1 $. 

\begin{thm}
	\label{thm:Pax2}
	Suppose $dr \ge 2 n$. Then  the following hold. 
		\begin{enumerate}[\rm (a)] 
			\item If $0\le x \le \rhlf$, then  
			$  l \ax  = 2\nd - x - 2 \leap_1\ax  $ and so
			$$ \dist(\oo,\ax) = \|\a\| +  2\nd - x - 2 \leap_1\ax. $$
		 	\item If  $\rhlf < x \le r - 1$, then  
			$  l\ax = 2 \nd-(r -x) - 2 \leap_2\ax  $ and so 
			$$ \dist(\oo,\ax) = \|\a\| +2 \nd-(r -x) - 2 \leap_2\ax .$$
		\end{enumerate}
\end{thm}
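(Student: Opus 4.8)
The plan is to follow the same walk-on-a-cycle strategy used in the proof of Theorem \ref{thm:Pax1}, while exploiting the extra freedom provided by $dr\ge 2n$. By Lemma \ref{lem:axSeq}(c) it suffices to determine $l\ax=\min_{\hat{x}}l(\hat{x})$, and I would reinterpret this quantity as the minimum length of a walk $W$ on the cycle $C_r$ starting at $0$, ending at $x$, that for every active direction $i_t$ visits at least one of its admissible ring-positions. The decisive feature of the case $dr\ge 2n$ is that, by Lemma \ref{lem:rpit} and \eqref{eq:yzhat}--\eqref{eq:yz}, each active direction $i_t$ can be covered either at its lowest position $y_t\in[0,\nd-1]$ or at its highest position $z_t=y_t+r-\nd+q_t\in[r-\nd,r-1]$ (or at intermediate positions, which I will argue are never advantageous). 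Thus \emph{all} relevant positions cluster in a short arc around $0$: the low cluster $[0,\nd-1]$ just to the right of $0$, and the high cluster just to its left, separated by a large empty arc of length roughly $r-2\nd$. I would prove part (a) in detail and then remark that part (b) is entirely analogous, with the high positions $z_t$ and the gap $\leap_2\ax$ playing the roles of the low positions $y_t$ and $\leap_1\ax$.

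For the lower bound, fix an arbitrary $\ax$-sequence $\hat{x}$ and let $W$ be the associated walk from $0$ to $x$ on $C_r$, of length $l(\hat{x})$, which covers every active direction. As in the proof of Theorem \ref{thm:Pax1}, either $W$ traverses every edge of $C_r$, in which case $l(\hat{x})\ge r$, or $W$ omits some edge $e$ and is therefore confined to the path $C_r-e$. On this path the minimal covering walk from $0$ to $x$ is confined to an interval $[m,M]$ containing $0$, $x$ and one admissible position of each direction, and has length $2(M-m)-\delta_e$, where $\delta_e$ is the path-distance from $0$ to $x$ in $C_r-e$. Hence $l(\hat{x})\ge 2\,\mathrm{ext}_e-\delta_e$, where $\mathrm{ext}_e$ denotes the minimal such extent $M-m$. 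I would then check that edges $e$ lying on the short $0$-to-$x$ arc, as well as the case in which $W$ uses all edges, force $l(\hat{x})\ge r-x$, and that $r-x\ge 2\nd-x-2\leap_1\ax$ because $\leap_1\ax\ge 1$ and $dr\ge 2n$ give $2\nd-2\leap_1\ax\le r$; so these configurations never beat the target value.

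The combinatorial heart is to show that the minimum of $\mathrm{ext}_e$ is attained when $e$ lies in the large empty middle arc (so that $\delta_e=x$ and each direction is covered at $y_t$ or $z_t$), and that there $\min(M-m)=\nd-\leap_1\ax$. Placing direction $t$ on the right requires $y_t\le M$, and on the left requires $z_t-r=y_t-\nd+q_t\ge m$; so direction $t$ is coverable within $[m,M]$ unless $y_t$ falls in the forbidden window $(M,\,m+\nd-q_t)$. Minimizing $M-m$ is therefore equivalent to maximizing the length of a window, free of all $y_t$, inserted among the gaps of the sequence $x<y_h\le\cdots\le y_s<\nd$ (where $y_{h-1}\le x<y_h$), with the $q_t$-corrections attached to the direction whose left-cover is used. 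Running over these gaps — the gap $y_h-x+q_h$ past $x$, the interior gaps $y_j-y_{j-1}+q_j$ for $h<j\le s$, and the terminal gap $\nd-y_s$ — yields exactly the three quantities defining $\leap_1\ax$, so the largest admissible window has length $\leap_1\ax$ and $\min(M-m)=\nd-\leap_1\ax$. Consequently $l\ax\ge 2(\nd-\leap_1\ax)-x$, which together with the trivial subcase $y_s\le x\le\rhlf$ (where $\leap_1\ax=\nd-x$ and the straight walk $0\to x$ gives $l\ax\le x$) establishes the bound in both regimes of the definition of $\leap_1\ax$.

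For the matching upper bound I would make the construction explicit: cut the directions at the maximizing gap, covering the directions below the cut at their low positions $y_t$ (to the right of $0$) and those above it at their high positions $z_t$ (to the left), and order them into an $\ax$-sequence traversed monotonically within each excursion; a direct evaluation gives $l(\hat{x})=2\nd-x-2\leap_1\ax$, whence equality. Combined with $\dist(\oo,\ax)=\|\a\|+l\ax$ from Lemma \ref{lem:axSeq}(c), this proves (a), and (b) follows by the parallel argument. I expect the main obstacle to be the third paragraph: verifying rigorously that intermediate admissible positions never reduce the extent (the clustering argument), correctly carrying the $\pm q_t$ corrections through the gap analysis when $n\not\equiv 0\bmod d$, and confirming that no omitted edge outside the middle arc produces a smaller extent, so that the minimum over $e$ is genuinely $\nd-\leap_1\ax$.
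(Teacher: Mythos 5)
Your proposal is correct and matches the paper's proof in all essential respects: the same low/high admissible positions $y_t$ and $z_t=y_t+r-\nd+q_t$, the same identification of $\leap_1\ax$ as the maximal gap (with the $q_t$ corrections and the three gap types $y_h-x+q_h$, $y_j-y_{j-1}+q_j$, $\nd-y_s$), and the same cut-at-the-maximal-gap sequences $\hat{x}^t$ for the matching upper bound. The only difference is organisational: you obtain the lower bound from the $C_r$-walk/omitted-edge interval argument recycled from Theorem \ref{thm:Pax1}, whereas the paper evaluates $l(\hat{x}^t)$ for all candidate sequences directly and then dominates every other $\ax$-sequence via a two-case analysis (an intermediate admissible position forces $l(\hat{w})\ge 2y_s-x$; otherwise $\hat{w}$ is a permutation of some mixture of $\hat{y}$ and $\hat{z}$) --- precisely the verification points you flag at the end of your proposal.
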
  

\begin{proof}  
(a) ~Let $ \hat{i} $, $ \hat{y} $ and $ \hat{z} $ be as defined in \eqref{eq:yzhat}. If $ x \ge y_s $, then $ \hat{y} $ is an optimal $ \ax $-sequence since  $ l\ax \ge \min\{ x, r-x \}$ and $ l(\hat{y}) = x $ as $ x\le \rhlf $ and $ 0=y_0\le y_1\le \cdots \le y_s \le y_{s+1}=x $. Hence the length of the path obtained from $ \hat{y} $ and $ \hat{i} $ is $ \|\a\| + x $ and the result follows if $ x\ge y_s $. If $ x<y_s $, then let 
	\begin{equation}
	\label{eq:xHt}	 \hat{x}^t = ( 0, z_s, z_{s-1}, \ldots, z_{t}, y_1,y_2, \ldots,y_{t-1},x ),\;   1 \le t \le s+1 ,
	\end{equation}
	with the understanding that $ \hat{x}^1 = (0,z_s,z_{s-1},\ldots,z_1,x) $ and $ \hat{x}^{s+1} = \hat{y}$. Since $ i_j \in D({y_j}) \cap D({z_j}) $ for $1\le j\le s$, each $ \hat{x}^t$ is an $ \ax $-sequence corresponding to $ \hat{i}^t = (i_s,i_{s-1},\ldots,i_t,i_1,\ldots,i_{t-1}) $,  $ 1 \le t \le s +1$. (See Figure \ref{fig:Zk2} for an illustration.) Thus, for $2\le t \le s$, we have 
	\begin{align*}
	 l(\hat{x}^t) &=  \min\{ z_s,r-z_s \} + \sum_{j=t+1}^{s}\min\{ z_j\!-\!z_{j-1},r\!-\!(z_j\!-\!z_{j-1})\} +  \min\{ |z_t-y_{1}| ,r\!-\!|z_t-y_{1}| \} \\
				& 	 + \sum_{j=2}^{t-1} \min\{ y_{j}-y_{j-1},r-(y_j-y_{j-1}) \} +  \min\{ |x-y_{t-1}|,r-|x-y_{t-1}| \}   .
	\end{align*}
 	 Since $ r >2\nd-2 $ (as $ dr\ge2n $), we have $ z_t-y_{1}\ge z_t-y_t\ge \lfloor r/2\rfloor $ by \eqref{eq:yz}. Moreover, since $ r >2\nd-2 $, $ r-\nd\le z_j \le r-1 $ and $ 0\le y_j \le \nd-1 $, we have $ z_j-z_{j-1} \le \rhlf $ and $ y_{j}-y_{j-1}\le \rhlf $, for $1\le j \le s$. Hence, for $2\le t \le s$, the computation above together with \eqref{eq:yz} gives 
	 \begin{align*}
	 	 l(\hat{x}^t) &= (r-z_t) + \sum_{j=t+1}^{s} ( z_j-z_{j-1}  ) + r-( z_t-y_1) + \sum_{j=2}^{t-1} (y_{j}-y_{j-1}) +|x-y_{t-1}| \\
	 				  &= 2(r - z_t) + y_{t-1} + |x- y_{t-1}| \\  
					  &= 2(\nd - y_{t} - q_t) + y_{t-1} +  |x- y_{t-1}|,\;2\le t \le s.		
	 \end{align*}
	  In addition, $ l(\hat{x}^1) = (r -z_1) +  \min\{  z_1-x,r-( z_1-x) \} $ and $ l(\hat{x}^{s+1}) = y_{s} +  \min\{  |y_s-x|,r-|y_s-x| \} = 2y_s -x $ as $ 0\le x< y_s \le \lfloor r/2\rfloor $. As above, let $ h $ be such that $ 1\le h\le s $ and $ y_{h-1} \le x <y_{h} $. Then 
	  \begin{enumerate}[(i)]
	  	\item $ l(\hat{x}^{s+1}) = 2\nd -x-2(\nd-y_s) $;
	  	\item $ l(\hat{x}^t)=  2 \nd - x  -2( y_{t} -  y_{t-1} + q_t)$, $ h< t \le s $;
	  	\item $l(\hat{x}^h)= 2\nd + x -2(y_{h}+q_h) = 2\nd - x - 2(y_{h}-x+q_h) $;
	  	\item $l(\hat{x}^t)= 2\nd + x -2(y_{t}+q_t) \ge l(\hat{x}^h) $, $ 2\le t < h $; and
	  	\item  $ l(\hat{x}^1) = \min\{ r-x,2(r-z_1)+x \} \ge \min\{ 2\nd -x-2,2(\nd-y_1-q_1)+x \}  \ge \min\{ 2\nd -x-2( y_{h}-x + q_h), 2 \nd-x -2(y_1-x+q_1) \} $  as $r>2\nd-2  $. 	  	
	  	Note that $ y_1-x+q_1\le y_h-x+q_h $ since either $ h=1 $, or $ h>1 $ and $ y_1<y_h $, or $ h>1 $ and $ q_1\le q_h $ if $ y_1 = y_h $ as $ i_1<i_h $. Thus $l(\hat{x}^1)\ge l(\hat{x}^h) $ in each case.
	  \end{enumerate} 
	Therefore,  
	\begin{equation}
	\label{eq:minLthm3}
		\min_{1\le t\le s+1} l(\hat{x}^t)=\min_{h\le t\le s+1} l(\hat{x}^t)= 2 \nd - x  -2\leap_1\ax.
	\end{equation}

	\begin{figure}
		\centering 
			\includegraphics[scale=.45]{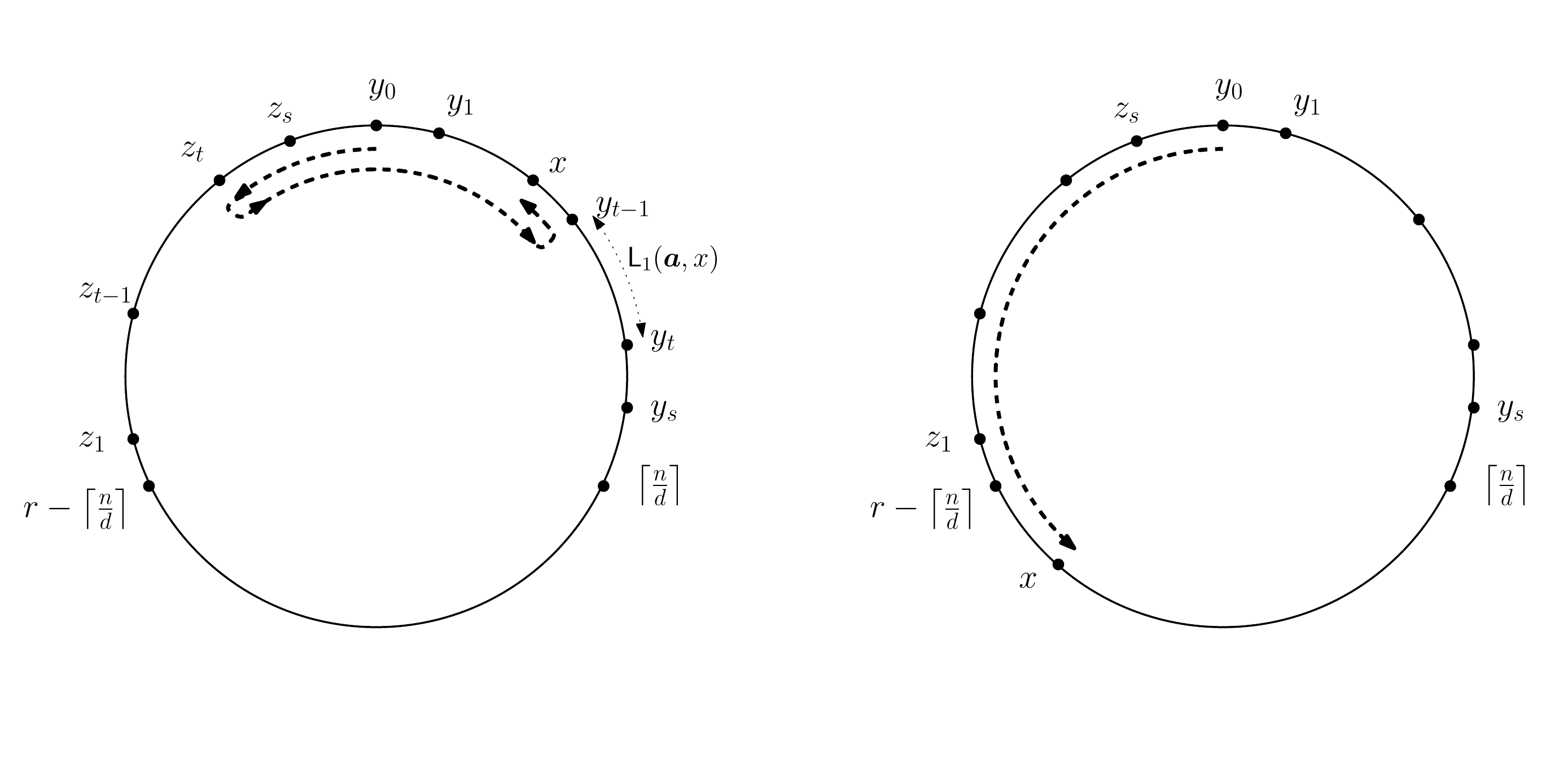}  
		\caption{Case $dr \ge 2n$: $ \ax $-sequences}
		\label{fig:Zk2}
	\end{figure}
	
	Now it remains to show that $l(\hat{w}) \ge 2 \nd - x  -2\leap_1\ax$ for any $ \ax $-sequence $ \hat{w}=(w_0,w_1,\ldots,w_{s+1})\ne \hat{x}^t $, $ 1\le t\le s+1 $. Let $ \hat{k}=(k_1,k_2,\ldots,k_s) $ be a permutation of $ \hat{i} $ such that $ k_j \in D({w_j}) $, $ 1\le j \le s $. By Lemma \ref{lem:rpit}, $ w_j =\lfloor (k_j+l_j n-1)/d \rfloor $ for $ 1\le j \le s $ and some $ 0\le l_j \le dr/n -1 $. 
		
	\vspace{.2cm}
	\textsf{Case 1:}~There exists $ 1\le j \le s $ such that $ w_j $ is contained in neither $ \hat{y} $ nor $\hat{z}$. Then $ 1 \le l_j\le dr/n-2 $ and so $ \lfloor n/d \rfloor \le w_j\le r-\nd $. Since $ x<y_s<\nd $, it follows that $ l(\hat{w}) \ge \min \{ w_j,r-w_j \} +\min \{ |x - w_j| ,r - |x - w_j| \}  \ge 2y_s -x = l(\hat{x}^{s+1})$.

	\vspace{.2cm}
	\textsf{Case 2:}~$ w_j $ is contained in either $ \hat{y} $ or $\hat{z}$ for every $ j $ with $ 1\le j\le s $. If  $ w_j $ is contained in $ \hat{y} $ (respectively, $ \hat{z} $) for all $ j $ with $ 1\le j\le s $, then $ \hat{w} $ is obtained from $ \hat{x}^{s+1} $ (respectively, $ \hat{x}^{1} $) by permuting its entries and so $ l(\hat{w})\ge l(\hat{x}^{s+1}) $ (respectively, $ l(\hat{w})\ge l(\hat{x}^{1}) $). Otherwise, let $ 2\le t \le s $ be the smallest integer such that $ z_t $ is an entry of $ \hat{w} $ and so $ \hat{w} $ must contain $ y_{t-1} $. Then either $ \hat{w}=(0,\ldots,z_t,\ldots,y_{t-1},\ldots,x) $ or $ \hat{w}=(0,\ldots,y_{t-1},\ldots,z_t,\ldots,x) $. Therefore, $ l(\hat{w}) \ge\min\{ (r-z_t) +(r-(z_t-y_{t-1}))+ |x-y_{t-1} |, y_{t-1}+r-(z_t-y_{t-1})+\min\{z_t-x,r+x-z_t \}\}=\min\{ 2(r-z_t) +y_{t-1}+|x-y_{t-1} |, 2y_{t-1}+r-x,2y_{t-1}+2(r-z_t)+x \} $ as $ z_t-y_{t-1} \ge \rhlf $.  Hence $ l(\hat{w})\ge \min\{ l(\hat{x}^t), l(\hat{x}^1),l(\hat{x}^t)\} $. 
	
	In both cases above, there exists some $ 1\le t \le s+1 $ such that $ l(\hat{w}) \ge l(\hat{x}^t) $. 
	So, by \eqref{eq:minLthm3}, $ l\ax =  l(\hat{x}^{t}) $ for some $ h \le t \le s +1$, and any $ \hat{x}^t $ achieving the minimum in \eqref{eq:minLthm3} is an optimal $ \ax $-sequence. 
	Therefore,   the result follows from Lemma \ref{lem:axSeq}.  

	(b) ~The proof is similar to that in case $ (\rm a) $ and so is omitted.   
	\end{proof}	

\begin{rem*}
{\em 
From the proof of Theorem \ref{thm:Pax2}, for any $ \ax\in G$ with $ x \le \rhlf $, $ \hat{x}^t $ given in \eqref{eq:xHt} is an optimal $ \ax $-sequence whenever $ l(\hat{x}^t) = 2\nd-x-2\leap_1\ax$, $ h\le  t\le s+1$. 
Thus,  $ \hat{x}^t $ and its corresponding $ \hat{i}^t $ give rise to a shortest path from $ \oo $ to $ \ax $ by Lemma \ref{lem:axSeq}. Similarly, for any $ \ax\in G$ with $ x > \rhlf $,  let $ \hat{y} $ and $ \hat{z}$ be as defined in \eqref{eq:yzhat} and $ 1\le l\le s $ be such that $ z_l<x\le z_{l+1 }$. Let
\[ 
\hat{x}^t = (0, y_1, \ldots,y_{t}, z_s, z_{s-1}, \ldots, z_{t+1},x ),\;   1 \le t \le s +1,
\]
where $ \hat{x}^1 $ and $ \hat{x}^{s+1}$ are respectively interpreted as $ \hat{z} $ and $ \hat{y} $. Then $ \hat{x}^t $  is an optimal $ \ax $-sequence whenever $ l(\hat{x}^t) = 2\nd-(r-x) -2\leap_2\ax$, $ 0\le t \le l$.  
By Lemma \ref{lem:axSeq}, $ \hat{x}^t $ and $ \hat{i}^t =( i_1,i_2,\ldots,i_{t-1},i_s,\ldots,i_t) $ give rise to a shortest path from $ \oo $ to $ \ax $.
}
\end{rem*}

\section{Diameter of $\rcr$}  
\label{sec:diam}

In \cite[Theorem 5]{sun2000} it was claimed that the diameter of $Q_n^-(d,r)$ (see Section \ref{sec:larger}) is equal to $ n+\lceil (r-3)/2\rceil $, and in \cite[Theorem 3]{hu2005} it was claimed that $\diam(Q_n^-(d,r))\le n+\lfloor r/2\rfloor+1$. As noticed in \cite{xie2013}, these results are incorrect. In \cite[Theorem 13]{xie2013} it was proved that $\diam(Q_n^-(d,r))$ is bounded from above by $ n+\lfloor 3r/2 \rfloor -1 $ if $ r\le 3 $ and $ n+\lfloor 3r/2 \rfloor -2 $ if $ r \ge 4 $. But still the precise value of $\diam(Q_n^-(d,r))$ was unknown. We give the exact value of $\diam(\rcr)$ in the following theorem. Our result shows in particular that the bound $\diam(Q_n^-(d,r)) \le n+\lfloor 3r/2 \rfloor -2$ ($r \ge 4$) is tight when $dr = n$ but not in general (by Lemma \ref{lem:isom}, $Q_n^-(d,r) \cong \rcr$ when $dr = n$). 

\begin{thm}
	\label{thm:diam}
	If $dr =n$, then 
	\[
	\diam(\rcr) = 
	\left\{
	\begin{array}{l l} 
	 n + r, &  \text{if } r = 3  ,\\ [.2cm]
	 n + \lfloor 3r/2\rfloor - 2,&     \text{if }  r \ge 4  .\\ 
	\end{array} \right. 
	\] 
		If $d r \ge 2n$, then $$\diam(\rcr) = n +\max \{ \lfloor{r/2}\rfloor, 2\nd -2 \}.$$ 
\end{thm}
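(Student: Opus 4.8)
The plan is to derive the diameter directly from the distance formulas in Theorems \ref{thm:Pax1} and \ref{thm:Pax2}, maximizing over all $\ax \in G$. Since $\dist(\oo,\ax) = \|\a\| + l\ax$ by Lemma \ref{lem:axSeq}(c), and $\|\a\|$ is maximized at $\|\a\| = n$ (taking $\a$ to be the all-ones vector), the natural strategy is to show that one can simultaneously make $\|\a\|$ large and $l\ax$ large, then argue these worst cases are genuinely achievable. First I would treat the case $dr = n$ using \eqref{eq:LpaxCase1}: here $\dist(\oo,\ax) = \|\a\| + \min\{r + x - 2\leap_1\ax,\ r + (r-x) - 2\leap_2\ax\}$. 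The key observation is that $\leap_1\ax$ and $\leap_2\ax$ are the largest gaps in the sorted $\ax$-sequence $\hat{y}_{\a,x}$; to \emph{maximize} the distance I want these leaps to be as \emph{small} as possible, which forces the occupied ring-positions $y_0, \ldots, y_{s+1}$ to be spread out evenly. When $\|\a\| = n$ every ring position is occupied (since $dr = n$ means each direction lives on a unique ring position), so the leaps are forced to be $1$, and I expect the extremal distance to come from a careful choice of $x$ balancing the two terms in the minimum.

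For the case $dr \ge 2n$ I would use Theorem \ref{thm:Pax2}, which gives $\dist(\oo,\ax) = \|\a\| + 2\nd - x - 2\leap_1\ax$ when $0 \le x \le \rhlf$ and a symmetric formula otherwise. Again taking $\|\a\| = n$ to maximize the Hamming-weight contribution, I would analyze how large $2\nd - x - 2\leap_1\ax$ can be. There are two competing regimes producing the $\max\{\lfloor r/2\rfloor,\ 2\nd - 2\}$ in the statement: one where the ring-distance term $x$ (or $r - x$) dominates, pushing the distance up by roughly $\lfloor r/2\rfloor$ when $r$ is large relative to $\nd$; and another where the ``vertical'' cost of covering all directions, captured by $2\nd$, dominates when $r$ is small relative to $\nd$. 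I expect to show that each candidate value is both an upper bound (by bounding $\leap_1\ax \ge 1$ and using $x \le \rhlf$, $\|\a\| \le n$) and attained (by exhibiting explicit $\a$ with $\|\a\| = n$ and a suitable $x$).

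The main technical obstacle will be the upper-bound direction: showing that \emph{no} vertex exceeds the claimed value. Because the distance formula couples $\|\a\|$, $x$, and the leaps $\leap_j\ax$ (which themselves depend on the distribution of $\a$'s support across ring positions via \eqref{eq:Fx} and the $q_t$ corrections in \eqref{eq:yz}), one cannot simply maximize each term independently — shrinking a leap may require occupying more ring positions, which is only possible if $\|\a\|$ is large. I would handle this by a case split on whether $\|\a\| = n$ or $\|\a\| < n$, observing that reducing $\|\a\|$ below $n$ frees up ring positions and can only \emph{increase} the available leaps (decreasing the distance contribution), so the maximum is attained at full weight. Once the problem reduces to $\|\a\| = n$, the leaps become essentially forced by the pigeonhole distribution of $n$ directions among the $r$ ring positions, and the remaining optimization is over the single parameter $x \in \ZZ_r$, which yields the stated $\max$ through elementary comparison of the two linear-in-$x$ expressions. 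A minor bookkeeping point will be the special behaviour at $r = 3$ in the $dr = n$ case, where $\lfloor 3r/2\rfloor - 2 = 2$ but the correct value is $n + r = n + 3$; I would check this boundary separately since the floor function and the small number of available ring positions interact atypically.
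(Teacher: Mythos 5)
Your proposal is correct and follows essentially the same route as the paper: invoke vertex-transitivity, maximize the distance formulas of Theorems \ref{thm:Pax1} and \ref{thm:Pax2} by bounding the leaps from below for the upper bound, and exhibit $(\boldsymbol{1}_n, x)$ for suitable $x$ (namely $x=0$ and $x=\lfloor r/2\rfloor$) to show attainment, with the $r=3$ boundary checked separately. The only cosmetic difference is that the paper does not need your monotonicity-in-$\|\a\|$ argument, since the upper bound follows for all $\a$ at once from $\|\a\|\le n$ together with $\leap_j\ax\ge 1$ (or the case $\leap_1\ax=\nd-x$).
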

\begin{proof}
	Since $\rcr $ is vertex-transitive, $ \diam(\rcr) = \max_{\ax\in G} \dist(\oo,\ax)  $.
	
	Suppose $dr = n$ first. By (\ref{eq:LpaxCase1}), 
	\[ 
		\dist(\oo,(\a,0)) =   \|\a\| + \min \{r,2r - 2\leap_2(\a,0) \} \le n+ r.
	\]
	We claim that this upper bound is achieved by $(\a,0) =(\boldsymbol{1}_n,0)$. In fact,  for $ 1\le i \le n +2$, we have $ y_i-y_{i-1}=0 $ or $ 1 $ in  $\hat{y}_{\boldsymbol{1}_n,0}$ (given in \eqref{eq:yHax}). Hence $ \leap_2(\boldsymbol{1}_n,0) = 1 $. So $ \min \{ r,2r-2\leap_2(\boldsymbol{1}_n,0)\} =r$ and $ \dist(\oo,(\boldsymbol{1}_n,0)) = n + r $.  
	
	By \eqref{eq:LpaxCase1}, for any $ \ax\in G $ with $ x \ne 0$, since $\leap_1\ax,\leap_2\ax \ge 1$,  we have 
	\[
		 \dist(\oo,\ax) \le  \|\a\| +  \min  \{ r+ x - 2, 2r-x - 2 \} \le  n+ 3\lfloor r/2\rfloor-2.
	\]
	This upper bound is achieved by $\ax = (\boldsymbol{1}_n,\lfloor r/2\rfloor)$. In fact, for $ 1\le i \le n +2$ we have $ y_i-y_{i-1}=0 $ or $ 1 $ in  $\hat{y}_{\boldsymbol{1}_n,\lfloor r/2\rfloor} $ and hence $\leap_1(\boldsymbol{1}_n,\lfloor r/2\rfloor)=\leap_2(\boldsymbol{1}_n,\lfloor r/2\rfloor) = 1$. Note that the maximum of $\min\{ r+x - 2, 2r- x - 2\} $  is $ 3\lfloor r/2\rfloor-2$, which is attained when $x = \lfloor r/2\rfloor$. Thus, $\diam(\rcr)= \max\{n + r,n+ \lfloor 3r/2\rfloor - 2\}$ if $dr=n $, as claimed.	
	 
	Now suppose $dr \ge 2n$. For $ \ax\in G $ with $0 \le x \le \lfloor r/2\rfloor$,  $ \dist(\oo,\ax) =   \|\a\| + 2\nd - x -2\leap_1\ax $ by Theorem \ref{thm:Pax2}. 
	If $ \leap_1\ax = \nd -x $, then $ 2\nd - x -2\leap_1\ax = x \le \lfloor r/2\rfloor $; otherwise, $ 2\nd - x -2\leap_1\ax \le 2\nd -2  $ as $ \leap_1\ax \ge 1 $. Thus, 
	\[
		 \dist(\oo,\ax) \le  n+ \max  \{ \lfloor r/2\rfloor, 2\nd -2 \} .
	\]
	
	Note that for any $(\boldsymbol{1}_n,x) $, in $ \hat{y} = (0,y_1,\ldots,y_n,x)$ as given in \eqref{eq:yzhat}, we have $ y_0=0 $, $ y_n=\nd-1 $, and either $y_{t} =y_{t-1}$ or  $y_{t} = y_{t-1}+1 $, $ 1\le t \le n $. In particular, for $ \ax =(\boldsymbol{1}_n,\lfloor r/2\rfloor) $, we have $y_s<\nd \le x $ and so $ \leap_1(\boldsymbol{1}_n,\lfloor r/2\rfloor) = \nd- x $. This implies $  \dist(\oo,(\boldsymbol{1}_n,\lfloor r/2\rfloor)) = n+ \lfloor r/2\rfloor $. 
	On the other hand, for $\ax =  (\boldsymbol{1}_n,0) $, we have $0\le y_{t} -y_{t-1}+ q_{t} \le 2 $, where $q_{t} = \lfloor (k+k_{t}-1)/d \rfloor =0$ or $1$ and  $0\le k,k_t-1 \le d-1 $ as defined in the beginning of Section \ref{sec:dr2n}. 
	For any $ t $ with $y_{t} =y_{t-1} +1 $, since $ i_{t}=i_{t-1}+1 $, $ i_{t-1}\in D({y_{t-1}} )$ and $ i_{t}\in D({y_t}) $, we have $ dy_t+k_t =i_{t}=i_{t-1}+1= dy_{t-1}+k_{t-1} +1$, that is, $ d+k_{t}=1+k_{t-1}$. Since $ 1\le k_{t-1},k_t\le d $, we have $ k_{t-1}=d $ and $ k_t=1 $. Therefore, $q_{t}=0$, $  y_{t} -y_{t-1}+ q_{t}=1 $ and so $\leap_1 (\boldsymbol{1}_n,0)= \max_{1\le t\le n} (y_{t} -y_{t-1}+ q_{t}) = 1$. Hence  $ \dist(\oo,(\boldsymbol{1}_n,0)) = n+2\nd -2 $. 
	
	
	Similar to the case $ 0\le x \le \rhlf $, for any $ \ax\in G $ with $\lfloor r/2\rfloor<x\le r-1$, $ \dist(\oo,\ax) \le n+\max  \{ \lceil r/2\rceil-1 ,2\nd -3 \}\le  n+ \max  \{ \lfloor r/2\rfloor, 2\nd -2 \} $.  Therefore, $\diam(\rcr) =  n+\max  \{\lfloor r/2\rfloor   , 2\nd -2 \} $ if $ dr \ge 2n $, and $ \diam(\rcr) $ is attained by $ (\oo,(\boldsymbol{1}_n,\lfloor r/2\rfloor )) $ or $ (\oo,(\boldsymbol{1}_n,0))$. 
\end{proof}

It would be ideal if the diameter of a network is of logarithmic order of its number of vertices. In view of Theorem \ref{thm:diam}, $ \rcr $ has this property when $r = O(n)$. 

 Applying Theorem \ref{thm:diam} to the $d$-ply cube-connected cycles $Q_n(d,n)$, the cube-of-rings \(COR(d,r)\)  and the cube-connected cycles \(CC_n\) (see Lemma \ref{lem:ccc}), we obtain the following corollary. In particular, we recover the formulas for $\diam(COR(d,r))$ and $\diam(CC_n)$ as special cases of  Theorem \ref{thm:diam}. It was claimed in \cite[Theorem 4]{cortina1998} that the diameter of $COR(d,r) $ is $  d (r+1) + \rhlf - 2$ when $ r\ge4 $. Unfortunately, its proof contains a computation error and this formula is incorrect except when $ d=r $. 

\begin{cor}
	\label{cor:diam}
	\begin{enumerate}[\rm (a)]
		\item $\diam(Q_n(d,n)) =n+ \max\{ \lfloor n/2\rfloor, 2\lceil n/d \rceil -2 \}$ for $n\ge d\ge 2$;
		\item $\diam(COR(d,r))=(d+1) r$ if $r=3$, and $ \diam(COR(d,r))=(d +1) r + \rhlf - 2$ if $r\ge 4$;
		\item $\diam(CC_n)=2n$ if $n=3$, and $ \diam(CC_n)=\lfloor 5n /2 \rfloor -2$ if $n\ge 4$ (\hspace{1sp}\cite{fris1997}).
	\end{enumerate} 
\end{cor}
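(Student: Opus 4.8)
The plan is to obtain each of the three formulas by feeding the isomorphisms of Lemma \ref{lem:ccc} into Theorem \ref{thm:diam} and then simplifying. For each network the single point that needs attention is to decide which branch of Theorem \ref{thm:diam} governs it, i.e.\ whether the product of the second and third parameters of the relevant $\rcr$ equals its first parameter (the ``$dr=n$'' branch) or is at least twice it (the ``$dr\ge 2n$'' branch).

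For part (a) I would write $Q_n(d,n)$ directly and compare parameters: here the first parameter is $n$ while the product of the last two is $dn$, so the hypothesis $d\ge 2$ forces $dn\ge 2n$ and we are in the $dr\ge 2n$ branch. Substituting the template $r\mapsto n$ into the formula $n+\max\{\lfloor r/2\rfloor,\,2\nd-2\}$ of Theorem \ref{thm:diam} gives exactly $n+\max\{\lfloor n/2\rfloor,\,2\lceil n/d\rceil-2\}$, which is the claim (and $d=1$ is excluded precisely because it would then fall under part (c)).

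For parts (b) and (c) I would instead use $COR(d,r)\cong Q_{dr}(d,r)$ and $CC_n\cong Q_n(1,n)$ from Lemma \ref{lem:ccc}. In both cases the product of the last two parameters equals the first ($d\cdot r=dr$ and $1\cdot n=n$, respectively), so each lies in the $dr=n$ branch of Theorem \ref{thm:diam}. For $COR(d,r)$ that branch yields $dr+r=(d+1)r$ when $r=3$ and $dr+\lfloor 3r/2\rfloor-2$ when $r\ge 4$; the elementary identity $\lfloor 3r/2\rfloor=r+\rhlf$ rewrites the latter as $(d+1)r+\rhlf-2$. For $CC_n$ the same branch (first parameter $n$, third parameter $n$) gives $2n$ when $n=3$ and $n+\lfloor 3n/2\rfloor-2=\lfloor 5n/2\rfloor-2$ when $n\ge 4$, using $n+\lfloor 3n/2\rfloor=\lfloor 5n/2\rfloor$.

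The whole argument is therefore a substitution accompanied by two floor identities; there is no genuine obstacle beyond careful bookkeeping of which symbol plays the role of $n$, $d$ and $r$ in each specialization, since the template parameters get reassigned (e.g.\ the ``$n$'' of Theorem \ref{thm:diam} becomes $dr$ for $COR(d,r)$). I would state the two floor identities explicitly so that the simplifications are transparent, and note that the case splits in (b) and (c) correspond exactly to the $r=3$ versus $r\ge 4$ split in Theorem \ref{thm:diam} once the template $r$ is identified as $r$ and $n$, respectively.
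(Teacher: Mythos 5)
Your proposal is correct and is essentially the paper's own argument: the corollary is obtained exactly by feeding the identifications of Lemma \ref{lem:ccc} into the appropriate branch of Theorem \ref{thm:diam} ($dr\ge 2n$ for $Q_n(d,n)$ with $d\ge 2$, and $dr=n$ for $Q_{dr}(d,r)$ and $Q_n(1,n)$) and simplifying with the floor identities you state. Your branch selection and the substitutions all check out.
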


\section{Total distance in $\rcr$}	
\label{sec:td}
    
In this section we give bounds for $\td(\rcr) = \sum_{\ax \in G} \dist(\oo,\ax)$. Since 
\begin{equation}
\nonumber
		 \sum_{\ax\in G} \|\a\| = r \sum_{\a \in \ZZ_2^n} \|\a\|= r \sum_{i=0}^{n} \binom{n}{i} i =  2 ^{n-1} n r,
	\end{equation}
by Lemma \ref{lem:axSeq}, 
\begin{equation}
\label{eq:td}
	\td(\rcr) = \sum_{\ax\in G} (\|\a\| + l\ax) = 2 ^{n-1} n r + \sum_{\ax\in G} l\ax. 
\end{equation}
It remains to estimate $ \sum_{\ax\in G} l\ax$, and for this purpose we will use the notions of integer partitions and $k$-compositions of integers. 

\subsection{Case $dr=n$}	 
\label{sec:td1}

Since $dr = n$, by \eqref{eq:lax1}, $\sum_{\ax \in G} l\ax= \sum_{\ax \in G} \min \{ r + x - 2 \leap_1 \ax, 2r- x-2 \leap_2\ax  \}$. 
In order to give a good estimate of this sum, we will give a lower bound for the number of vertices $ \ax $ such that $\leap_1\ax \le g $ (or $\leap_2\ax \le g $) for a certain $ g\ge1 $.  
 
Let $ 2\le z\le r $ be an integer. For any $\c= (c_{d+1}, c_{d+2}, \dots ,c_{dz}) \in \ZZ_2^{d(z-1)}$, let $$ \hat{w}_{\c} = (w_0,w_1,\ldots,w_s,w_{s+1}) $$ be such that $w_0 =0 $, $w_{s+1}=z $, $ 1\le w_i\le w_{i+1} \le z-1$ for $ 1\le i \le s-1 $, and $ c_i \in D({w_i} )$ for $ 1\le i \le s$, where  $s =\|\c\|$. Since $ dr =n $, $ \hat{w}_{\c} $ is well defined and unique. Define 
\begin{equation}
\nonumber
V(z) := \left\{ \c \in \ZZ_2^{d(z-1)} : \max_{0\le i \le s} (w_{i+1}  - w_i) \le  \lceil \log^2 z \rceil  \right\}.
\end{equation}

\begin{lem}
	\label{lem:Vx}
	 For $2 \le z \le r-1$, we have 
	  $$| V(z) | \ge  2^{d(z-1)}\left(1-2z^{1-d \log z}\right).$$
\end{lem}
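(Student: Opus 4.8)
The plan is to prove the lower bound by a complementary-counting argument. I would fix $z$ with $2 \le z \le r-1$ and estimate the number of ``bad'' vectors $\c \in \ZZ_2^{d(z-1)}$, namely those for which the associated sequence $\hat{w}_{\c}$ has some gap $w_{i+1} - w_i > \lceil \log^2 z \rceil$. Writing $g := \lceil \log^2 z \rceil$, the key observation is that the entries of $\hat{w}_{\c}$ are exactly the values $w_i = \lfloor (c_i - 1)/d \rfloor$ ranging over those indices $i$ with $c_i = 1$ (using $dr = n$ so that each direction determines a unique ring position, as in Lemma \ref{lem:rpit}). Thus a gap larger than $g$ between consecutive occupied positions corresponds to a run of $g$ consecutive ``blocks'' of length $d$ in the coordinate string $\c$ that are entirely zero. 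Concretely, since coordinates $c_{jd+1}, \ldots, c_{(j+1)d}$ all map into the same ring position $j$, an empty interval of $g$ positions means $gd$ consecutive coordinates of $\c$ are all $0$.

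First I would make this translation precise: the vector $\c$ is bad only if there exists a window of $g$ consecutive positions in $\{0, 1, \dots, z-1\}$ carrying no occupied index, i.e. a block of $gd$ consecutive zero coordinates of $\c$. Next I would bound the number of bad vectors by a union bound over the possible starting positions of such an empty window. There are roughly $z$ choices for where the empty window of length $g$ can begin. For each fixed window, requiring those $gd$ coordinates to be $0$ leaves the remaining $d(z-1) - gd$ coordinates free, giving at most $2^{d(z-1) - gd}$ vectors. Hence the number of bad vectors is at most about $z \cdot 2^{d(z-1) - gd} = z \cdot 2^{d(z-1)} \cdot 2^{-gd}$. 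Since $g = \lceil \log^2 z \rceil \ge \log^2 z = (\log z)(\log z)$, we get $2^{-gd} \le 2^{-d(\log z)(\log z)} = (2^{\log z})^{-d \log z} = z^{-d \log z}$, so the bad count is at most $z^{1 - d\log z}\, 2^{d(z-1)}$.

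The final step is to assemble these estimates. Counting each of the (at most) $z$ window positions may double-count, and a clean union bound over the at most $z$ starting positions with the factor $z^{-d\log z}$ yields a bound of the form $2 z^{1 - d \log z} 2^{d(z-1)}$ on the number of bad vectors, where the factor $2$ absorbs boundary windows and the two end-gaps (the gaps $w_1 - w_0$ and $w_{s+1} - w_s$ abutting $0$ and $z$). Subtracting from the total $2^{d(z-1)}$ then gives $|V(z)| \ge 2^{d(z-1)}(1 - 2 z^{1 - d\log z})$, as required. The main obstacle I anticipate is the bookkeeping in the union bound: one must count starting positions of empty windows carefully so that the constant comes out to be exactly $2$, and one must correctly handle the two boundary gaps (adjacent to $0$ and to $z$) which behave slightly differently from interior gaps since they are anchored at a fixed endpoint rather than between two occupied positions. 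Getting the combinatorial translation between zero-runs in $\c$ and gaps in $\hat{w}_{\c}$ exactly right, including the role of the block size $d$, is the delicate part; once that correspondence is established the probabilistic estimate is routine.
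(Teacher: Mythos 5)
Your argument is correct and, modulo one notational slip (the ring position attached to an index $i$ with $c_i=1$ is $\lfloor (i-1)/d\rfloor$, not $\lfloor (c_i-1)/d\rfloor$), it goes through with room to spare. Both you and the paper do complementary counting with a union bound over the location of the oversized gap, but you index the union differently. The paper parametrizes the bad vectors by the exact consecutive pair $(w_j,w_{j+1})=(q,q+p)$ with $p>g$, computes the size of each such class $L_{q,p}$, and then must sum a series of the form $\sum_i i\,2^{di}$ to finish. You instead observe that any gap exceeding $g=\lceil \log^2 z\rceil$ forces some window of exactly $g$ consecutive empty blocks (equivalently $gd$ consecutive, block-aligned zero coordinates of $\c$), and you union over the at most $z$ starting positions of that window; this gives at most $z\cdot 2^{d(z-1)-dg}\le z^{1-d\log z}\,2^{d(z-1)}$ bad vectors directly, with no series to sum. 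That is in fact a factor of $2$ better than what the lemma requires, so the two issues you flag as delicate are non-issues: the end-gaps $w_1-0$ and $z-w_s$ produce length-$g$ empty windows inside $\{1,\dots,z-1\}$ exactly as interior gaps do (a gap of size $p>g$ always leaves $p-1\ge g$ consecutive empty positions), and a union bound can only over-count, which is harmless for an upper bound on the bad set. Your route is the more elementary of the two and yields a marginally sharper constant.
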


A proof of this lemma will be given in Appendix \ref{app:lemvx}.
 
\begin{thm}
	\label{thm:td1}
	Suppose $dr=n$. If $ r\ge 2^{9} $, then 
	\begin{equation}
	\label{eq:td1}
	 2^{n-2}r^2(2d+5)\left (1 - \frac{20 \log^2 r}{2n+5r} \right ) \le \td(\rcr) \le  2^{n-2}r^2(2d+5)\left (1- \frac{8(r-1)} {2nr +5r^2 } \right );
	\end{equation}
	and if $ 3\le r<2^{9} $, then
		\begin{equation}
		\label{eq:td1b}
			  2^{n-2} (2nr + r^2) \le \td(\rcr) \le 2^{n-2}  (2nr + 5r^2-8r+8).
		\end{equation}
	
\end{thm}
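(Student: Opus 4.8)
The plan is to start from the identity \eqref{eq:td}, namely $\td(\rcr) = 2^{n-1}nr + \sum_{\ax\in G} l\ax$, so that everything reduces to estimating $\sum_{\ax} l\ax$, where by \eqref{eq:lax1} we have $l\ax = \min\{r+x-2\leap_1\ax,\, 2r-x-2\leap_2\ax\}$. The common anchor for both bounds is the \emph{base sum} $B := \sum_{\ax}\min\{r+x,2r-x\}$ obtained by discarding the gap corrections. Since the summand is independent of $\a$, we have $B = 2^n\sum_{x=0}^{r-1}\min\{r+x,2r-x\}$, and a direct computation (splitting the range at $x=\rhlf$) gives $\sum_{x=0}^{r-1}\min\{r+x,2r-x\} = \lfloor 5r^2/4\rfloor$, so $B\le 5\cdot 2^{n-2}r^2$ with equality when $r$ is even. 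Using $n=dr$ one checks $5\cdot 2^{n-2}r^2 + 2^{n-1}nr = 2^{n-2}r^2(2d+5)$, which is exactly the leading factor of the theorem; this identification (together with $2nr+5r^2 = r^2(2d+5)$ and $2n+5r = r(2d+5)$) is what makes the stated bounds natural.

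For the upper bound I would use the trivial inequalities $\leap_2\ax\ge 1$ for every $\ax$ and $\leap_1\ax\ge 1$ whenever $x\ne 0$. These give $l\ax\le \min\{r+x,2r-x\}-2$ for $x\ne 0$, while $l(\a,0)\le r = \min\{r,2r\}$, and hence $\sum_{\ax} l\ax \le B - 2^{n+1}(r-1)$. Combined with \eqref{eq:td} this is precisely the claimed upper bound, because simplifying the two displayed right-hand sides shows both equal $2^{n-2}r^2(2d+5)-2^{n+1}(r-1)$; for odd $r$ the argument is in fact slightly stronger.

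The real work is the lower bound for large $r$, and this is where Lemma \ref{lem:Vx} enters. Using $dr=n$ and Lemma \ref{lem:rpit}, coordinate $j$ of $\a$ corresponds to the unique ring position $\lfloor(j-1)/d\rfloor$, so the directions with position in $\{1,\dots,x-1\}$ are exactly coordinates $d+1,\dots,dx$; consequently $\leap_1\ax\le\lceil\log^2 x\rceil$ precisely when $(a_{d+1},\dots,a_{dx})\in V(x)$, and the reflection $y\mapsto r-y$ of the ring gives the analogous statement for $\leap_2\ax$ and $V(r-x)$. Call $\ax$ \emph{good} if both $\leap_1\ax$ and $\leap_2\ax$ are at most $g:=\lceil\log^2 r\rceil$; then a good vertex satisfies $l\ax\ge\min\{r+x,2r-x\}-2g$, while every vertex satisfies $l\ax\ge 0$ and $\min\{r+x,2r-x\}\le\tfrac{3}{2}r$. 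Lemma \ref{lem:Vx} bounds the non-good vertices: for fixed $x$ the number of $\a$ failing the $\leap_1$ condition is at most $2^{n+1}x^{1-d\log x}$ (and similarly for $\leap_2$), and since $\sum_{x\ge 2}x^{1-d\log x} = \sum_{x\ge 2}x\,2^{-d(\log x)^2}$ converges to a small constant, the total number $N$ of non-good vertices is only $O(2^n)$. I would then write $\sum_{\ax} l\ax \ge B - 2g\cdot 2^n r - \tfrac{3}{2}r\,N$, treat the boundary values $x\in\{0,1,r-1\}$ (where $V(\cdot)$ is unavailable) separately as a negligible $O(2^n r)$, and verify that the total correction is at most $5\cdot 2^n r\log^2 r = 2^{n-2}r^2(2d+5)\cdot\frac{20\log^2 r}{2n+5r}$. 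The hypothesis $r\ge 2^9$ is exactly what forces $\lceil\log^2 r\rceil$, the convergent bad-vertex constant, and the boundary terms all to fit inside the constant $20$.

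For $3\le r<2^9$, Lemma \ref{lem:Vx} is too weak (the factor $1-2z^{1-d\log z}$ need not even be positive for small $z$), so I would fall back to the crude universal bound from $\leap_1\ax\le x$ and $\leap_2\ax\le r-x$, namely $l\ax\ge\min\{x,r-x\}$. Summing gives $\sum_{\ax} l\ax\ge 2^{n-2}(r^2-[r\text{ odd}])$, which already yields $2^{n-2}(2nr+r^2)$ for even $r$; for odd $r$ one needs $2^{n-2}$ more, easily recovered by noting that $l\ax=\min\{x,r-x\}$ forces a maximal gap (e.g.\ $\leap_2\ax=r-x$ when $x\le\rhlf$, an event holding for only a fraction $2^{-d(r-x-1)}$ of vertices), while all remaining vertices satisfy $l\ax\ge\min\{x,r-x\}+2$. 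The main obstacle is clearly the large-$r$ lower bound: making the dictionary between $V(z)$ and the gaps $\leap_1,\leap_2$ precise (including the behaviour at the endpoints $0$, $x$, $r$ and the split point of $\hat{y}_{\a,x}$), controlling \emph{both} halves of the sequence simultaneously, and tracking every constant so that the error lands exactly at $\frac{20\log^2 r}{2n+5r}$.
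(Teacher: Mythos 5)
Your proposal is correct and follows essentially the same route as the paper: reduce to $\sum_{\ax}l\ax$ via \eqref{eq:td}, obtain the upper bound from $\leap_1\ax,\leap_2\ax\ge 1$, the small-$r$ lower bound from $\leap_1\ax\le x$ and $\leap_2\ax\le r-x$, and the large-$r$ lower bound by using Lemma \ref{lem:Vx} (with the same dictionary between the gaps and membership in $V(x)$, $V(r-x)$ with free boundary blocks) to show that outside an $O(2^n)$ exceptional set both gaps are $O(\log^2 r)$, with the same arithmetic $2nr+5r^2=r^2(2d+5)$ delivering the constant $20$. The only nit is your claim that every non-exceptional vertex has $l\ax\ge\min\{x,r-x\}+2$: for odd $r$ and $x=\rhlf$ one can have $\leap_1\ax=(r-1)/2$ and hence $l\ax=x+1$, but the $2^{n-2}$ surplus you need still follows since even a gain of $1$ over a set of size $2^{n-1}$ suffices.
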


\begin{proof} 
Note that for any $ \ax $ with $ \a=(a_1,a_2,\ldots,a_n) \in \ZZ_2^n $, the sequence in \eqref{eq:yHax} is exactly the same as that for $ (\a',r-x) $ but with reverse order for all entries except the first and last ones, where $ \a' =(a_n,\ldots,a_2,a_1) $. Therefore, $ \leap_1\ax=\leap_2(\a',r-x) $ and $ \leap_2\ax=\leap_1(\a',r-x) $. Consequently, we have $l \ax= l(\a',r-x) $ and so 
	 \begin{equation}
	 \label{eq:temp1}
	 \sum_{\a \in \ZZ_2^n} l\ax = \sum_{\a \in \ZZ_2^n} l(\a,r-x) .
	 \end{equation}
	
	By \eqref{eq:lax1}, if $x = 0$, then $l\ax\le  r $; 
	and if $x \ne 0$, then $l\ax \le  r+x-2 $ since $\leap_1\ax \ge 1$. Setting $ \delta=0 $ if $ r $ is odd and $ \delta=1 $ if $ r $ is even, then, by (\ref{eq:temp1}), we have
	\begin{align}
	\nonumber
		\sum_{\ax\in G} l\ax &\le  \sum_{\a \in \ZZ_2^n} \left( 2\sum_{x=1}^{\lfloor (r-1)/2 \rfloor} l\ax + \delta l(\a,\rhlf) + l(\a,0)\right) \\	
		\nonumber
		&\le 2^{n+1}\sum_{x =1}^{\rhlf}(r+x-2)+\delta 2^n(\lfloor 3r/2\rfloor-2) +2^n r \\ 		
		\label{eq:Slax1UB} 
		& \le  2^{n-2}(5r^2-8r +8),
	\end{align}		
	which together with \eqref{eq:td} gives the upper bounds in \eqref{eq:td1} and \eqref{eq:td1b} after straightforward manipulations. 
	 
	It remains to prove the lower bounds in \eqref{eq:td1} and \eqref{eq:td1b}. Observe that \eqref{eq:lax1} and (\ref{eq:temp1}) together yield 
	$$\sum_{\ax \in G} l\ax \ge 2\sum_{x=1}^{\lfloor (r-1)/2 \rfloor} \sum_{\a \in \ZZ_2^n}   \min \{ r+x -2 \leap_1 \ax, 2r- x  - 2 \leap_2\ax  \} .$$ 
	 
	\vspace{.1in}
	\textsf{Case 1:}~Assume first that $r\ge 2^{9}$ and denote $ {h} =\lfloor \log^2 r \rfloor $. Then  $2{h}\le\rhlf $. For any $ \ax $ with $ {h} \le x \le \rhlf-{h}$, since $ \leap_1\ax \ge 1 $ and $ \leap_2\ax \ge 1 $,  if   
	\begin{equation}
	\label{eq:l1l2}
	\leap_1\ax \le \lceil \log^2 x \rceil  \text{ and } \leap_2\ax \le \lceil \log^2 (r-x) \rceil ,
	\end{equation}
	then $ 2\leap_2\ax  - 2\leap_1\ax  \le  2\leap_2\ax-2 \le 2{h} \le r-2 x $, that is, $2r- x - 2 \leap_2\ax  \ge r + x - 2 \leap_1\ax $. 
	
	Denote by ${N_x}$ the number of elements $\ax  \in  G$ such that $ 2r-x-2\leap_2\ax \ge r+x-2  \leap_1\ax$.  
	If $ \b \in V(x) $ and $\b' \in  V(r-x)$, then $ \ax = ((a_1,a_2,\ldots, a_d,\b ,a_{dx+1},\ldots,$  $ a_{dx+d},\b'),x) $  satisfies \eqref{eq:l1l2} for arbitrary $ (a_1,a_2,\ldots, a_d),(a_{dx+1},\ldots, a_{dx+d})\in \ZZ_2^d $. Conversely, for any $ \ax $ satisfying \eqref{eq:l1l2}, we have $ (a_{d+1},a_{d+2},\ldots, a_{dx}) \in  V(x) $ and $(a_{d(x+1)+1},$  $\ldots, a_{dr-1},a_{dr}) \in  V(r-x)$. 	
	Thus, by Lemma \ref{lem:Vx}, for  $ {h} \le x \le \rhlf - {h} $, there are at least $2^{2d} 2^{d(x-1)} (1-2x^{1-d\log x })  2^{d(r-x-1)}(1-2(r-x)^{1-d\log (r-x)})$  elements in $ \ZZ_2^n$  satisfying (\ref{eq:l1l2}), that is, $ {N_x} \ge 2^{n} (1-2x^{1-d\log x })(1-2(r-x)^{1-d\log (r-x)}) $.
	Since  $ {h} \le x \le \rhlf-{h}$, we have  $  x^{1-d\log x } \le  1/(2^{10}r) $ and $  (r-x)^{1-d\log (r-x)} \le 1/(2^{10}r)  $, and therefore ${N_x} \ge 2^{n} (1- 1/(2^{9}r) )^2$. 
	This together with $ r + x - 2\leap_1\ax\ge(r+x)(1-2\lceil\log^2 x\rceil /(r+x)) \ge (r+x)(1 -2\lceil \log^2 h\rceil /(r+h)) $ implies that for $ r\ge 2^{9} $,
	\begin{align}
	\nonumber	
			\sum_{\ax\in G} l\ax & \ge 2\sum_{x={h}}^{\lfloor (r-1)/2 \rfloor -{h}}   {N_x} (r + x - 2\leap_1\ax)  \\ \nonumber		
			& \ge 2\sum_{x={h}}^{\lfloor (r-1)/2 \rfloor -{h}} 2^{n} (1-1/(2^9r))^2 (r+x)(1 -  2\lceil \log^2 h\rceil /(r+h)) \\ \nonumber
			& \ge 2^{n+1} (1- 2.03(\log^2 h)/r)\sum_{x={h}}^{\lfloor (r-1)/2 \rfloor -{h}} ( r + x) \\ \nonumber			
		& = 2^{n-2}(1-2.03(\log^2 h)/r) \left( {5r^2} - (20r-8){h} - 10r\right) \\ \label{eq:Slax1R}
		&\ge  5.2^{n-2}r^2(1-(4\log^2 r/r)). 	 
	\end{align}
	 Combining this with (\ref{eq:td}), we obtain the lower bound in \eqref{eq:td1}. 
	
	\vspace{.1in}
	\textsf{Case 2:}~Now assume $3\le r<2^{9}$. Note that $ l(\a,0)\ge 2 $ except for the $ 2^d $ vertices$ (\a,0)$ with $ \a=(a_1,\ldots,a_d,0\ldots,0) $ for which $ \leap_2(\a,0)= r $. Hence $ \sum_{\a \in \ZZ_2^n}l(\a,0) \ge 2(2^n-2^d)\ge 2^n $ by $ 2d\le n $ and \eqref{eq:lax1}. If $1\le x \le r/2$, then $l\ax \ge x $ since $ \leap_1\ax\le x $ and $ \leap_2\ax\le r-x $.  Therefore, 
	\begin{equation}
	\label{eq:Slax1b} 
		\sum_{\ax \in G} l\ax \ge 2 \sum_{\a \in \ZZ_2^n} \sum_{x=1}^{\lfloor r/2\rfloor } x+\sum_{\a \in \ZZ_2^n}l(\a,0) \ge 2^{n+1} \sum_{x=1}^{\lfloor r/2\rfloor } x +2^n \ge 2^{n-2} r^2.
	\end{equation}
This together with \eqref{eq:td} implies the lower bound in (\ref{eq:td1b}).
\end{proof} 
 
\subsection{Case $dr \ge  2n$} 
\label{sec:tdc2}

Similar to Section \ref{sec:td1}, in order to estimate $ \td(\rcr) $ we will give a lower bound for the number of vertices $ \ax $ with $\leap_1\ax \le g$ for a certain $ g\ge 1 $. For this purpose we will consider integer sequences $ 0=x_0<x_1 < \dots<x_k\le x_{k+1}=m $ such that $  \max_{1\le i \le k+1} ( x_i -x_{i-1}) \le g$. We call a solution $ (r_1,r_2,\dots,r_k) $ to the equation:     
\[
r_1 + r_2 + \dots + r_k = m
\]
with all $r_i$'s positive integers a \emph{$k$-composition} of $m$. It is known that the number of $k$-compositions of $m$ is $\binom{m-1}{k-1}$. Any $k$-composition $ (r_1,r_2,\dots,r_k)  $ of $m$ gives rise to a sequence $ 0=x_0<x_1 < \dots<x_k = x_{k+1}=m $, where $ x_i=\sum_{j=1}^{i}r_j$, $1\le i \le k $.
Clearly, $ \max_{1\le i \le k+1} ( x_i -x_{i-1}) =\max_{1\le i\le k} r_i $. Also any $(k+1)$-composition $ (r_1,r_2,\dots,r_k,r_{k+1})  $ of $m$  gives rise to a sequence $ 0=x_0<x_1 < \dots<x_k < x_{k+1}=m $, where $ x_i=\sum_{j=1}^{i}r_j$, $1\le i \le k $, which satisfies  $ \max_{1\le i \le k+1} ( x_i -x_{i-1}) =\max_{1\le i\le k+1} r_i $. 
Hence, for any fixed $ k $ with $ \lceil m/g \rceil \le k \le m $, the number of sequences $ 0=x_0<x_1 < \dots<x_k = x_{k+1}=m $ (respectively, $ 0=x_0<x_1 < \dots<x_k < x_{k+1}=m $) with $  \max_{1\le i \le k+1} ( x_i -x_{i-1}) \le g $ is equal to the number of $k$-compositions $ (r_1,r_2,\dots,r_k)  $ (respectively, $(k+1)$-compositions $ (r_1,r_2,\dots,r_{k+1})  $) of $m$ with $1\le r_i \le g$ for each $ i $. 

Given integers $ a<b $ and $ k $ with $ (b-a)/ \lceil \log(b-a)\rceil \le k\le b-a $, we defined an \emph{$ [a,b]_k $-sequence} to be an integer sequence $0=x_0<x_1 < \dots<x_k \le x_{k+1}=m $ such that $  \max_{1\le i \le k+1} ( x_i -x_{i-1}) \le \lceil \log(b-a)\rceil $. 
The next lemma is a key step towards an asymptotic formula for $ \td(\rcr) $ to be given in Theorem \ref{thm:td2}.

\begin{lem}
	\label{lem:avr}
	Given an integer $ m\ge 9$, for $k$ with $m/\lceil\log{m}\rceil \le k \le m$ let $ b_k $ be the number of $ [0,m]_k $-sequences. Then, for $g= \lceil \log m \rceil$ and any real number $ z\ge2 $,  
	\[
		  \left( 1-(2/(z+1))^{\lceil\log{m}\rceil} \right) (z+1)^m \le \sum_{k=\lceil  m/g \rceil}^m b_k {z}^k \le ({z}+1)^m .
	\]	
\end{lem}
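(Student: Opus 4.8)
The plan is to recast $[0,m]_k$-sequences as subsets of $\{1,\dots,m\}$ subject to a gap condition, so that both bounds reduce to a clean complementary-counting estimate against $(z+1)^m=\sum_{k}\binom{m}{k}z^k$. First I would fix the correspondence. An $[0,m]_k$-sequence $0=x_0<x_1<\cdots<x_k\le x_{k+1}=m$ is determined by $U=\{x_1,\dots,x_k\}\subseteq\{1,\dots,m\}$ of size $k$, and its condition $\max_i(x_i-x_{i-1})\le g$ (with $g=\lceil\log m\rceil$) is exactly the requirement that every consecutive difference of $\{0\}\cup U\cup\{m\}$ is at most $g$; call such a $U$ \emph{admissible}. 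The cases $x_k=m$ and $x_k<m$ correspond to $m\in U$ and $m\notin U$, matching the split into $k$-compositions and $(k+1)$-compositions in the discussion preceding the lemma. Thus $b_k$ equals the number of admissible subsets of size $k$, and dropping the gap condition gives the driving identity $\binom{m-1}{k-1}+\binom{m-1}{k}=\binom{m}{k}$, whence $\sum_k\binom{m}{k}z^k=(z+1)^m$.

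For the upper bound I would simply observe $b_k\le\binom{m}{k}$, since admissible subsets form a subcollection of all subsets. As $z\ge2>0$ makes every term nonnegative, restricting the range only drops terms, so $\sum_{k=\lceil m/g\rceil}^{m}b_kz^k\le\sum_{k=0}^{m}\binom{m}{k}z^k=(z+1)^m$.

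For the lower bound I would estimate the complementary weight $(z+1)^m-\sum_{k=\lceil m/g\rceil}^{m}b_kz^k$, which is the $z$-weighted count of subsets that are either \emph{inadmissible} (some gap exceeds $g$) or admissible but of size below $q:=\lceil m/g\rceil$. The inadmissible subsets I would control by a union bound: if $\{0\}\cup U\cup\{m\}$ has a gap $>g$, then $U$ avoids one of the $m-g+1$ windows $\{t+1,\dots,t+g\}$, $0\le t\le m-g$, and the subsets avoiding a fixed window of size $g$ contribute exactly $(z+1)^{m-g}$. Hence the inadmissible weight is at most $(m-g+1)(z+1)^{m-g}$. Since $g=\lceil\log m\rceil$ forces $2^g\ge m$, and $m-g+1<m\le 2^g$, this is at most $2^g(z+1)^{m-g}=(2/(z+1))^{g}(z+1)^m$, precisely the allowed error.

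The remaining and most delicate point is the admissible-but-too-small subsets excluded by the range $k\ge q$. A short counting argument shows these must have size exactly $q-1$, since fewer than $q$ gaps each $\le g$ cannot sum to $m$; their weight is a single lower-tail term $c\,z^{q-1}$ with $c\le\binom{m}{q-1}$. The main obstacle is to absorb this term: because $q-1\approx m/\log m$ lies far below the mode $\tfrac{z}{z+1}m$ of the weighted binomial, it is exponentially smaller than $(z+1)^{m-g}$, and it fits inside the strict slack $\bigl(2^g-(m-g+1)\bigr)(z+1)^{m-g}\ge(g-1)(z+1)^{m-g}>0$ left over from the union bound. Making this quantitative for all $m\ge 9$—for instance via $\binom{m}{q-1}z^{q-1}\le(g-1)(z+1)^{m-g}$, which holds since $z^{q-1}\le(z+1)^{q-1}$ with $q-1\ll m-g$—is the one genuine estimate; everything else is bookkeeping.
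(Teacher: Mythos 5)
Your proposal is correct in outline and reaches the stated bounds, but by a genuinely different route from the paper. The paper works with compositions: it writes $b_k$ as the number of $k$-compositions plus $(k+1)$-compositions of $m$ with all parts in $[1,g]$, counts the bad compositions (some part exceeding $g$) through the families $C_{q,p}(i)$ and the Vandermonde identity to get $b_k\ge\binom{m}{k}-(k+1)\binom{m-g}{k}$, and then must separately bound the entire lower binomial tail $\sum_{k<\lceil m/g\rceil}\binom{m}{k}z^k$ using $\sum_{k\le t}\binom{m}{k}\le\binom{m}{t}\tfrac{m-t+1}{m-2t+1}$ and $\binom{m}{t}\le(me/t)^t$. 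Your subset reformulation buys two things. First, the window union bound (a subset creating a gap $>g$ in $\{0\}\cup U\cup\{m\}$ must avoid one of at most $m-g+1$ intervals of length $g$, each contributing weight exactly $(z+1)^{m-g}$) reproduces the paper's term $(m-g+1)(z+1)^{m-g}$ with much less bookkeeping. Second, and more substantially, your pigeonhole observation that an \emph{admissible} subset of size below $q:=\lceil m/g\rceil$ must have size exactly $q-1$ (at most $q$ gaps, each $\le g$, summing to $m$) means the only below-threshold weight not already absorbed by the window bound is the single term $\binom{m}{q-1}z^{q-1}$; this replaces the paper's entire Chernoff-style tail estimate. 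That is a cleaner argument.

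The one step you should not leave as stated is the closing inequality $\binom{m}{q-1}z^{q-1}\le(g-1)(z+1)^{m-g}$. The reason you offer --- that $z^{q-1}\le(z+1)^{q-1}$ and $q-1\ll m-g$ --- does not control the factor $\binom{m}{q-1}$, which is the dominant one (it can be of order $(2eg)^{m/g}$). The inequality is nonetheless true for all $m\ge 9$ and $z\ge 2$: since $q-1\le 2(m-g-q+1)$ in this range, the ratio $(z+1)^{m-g}/z^{\,q-1}$ is increasing on $z\ge 2$, so it suffices to verify $\binom{m}{q-1}2^{q-1}\le(g-1)3^{m-g}$, which follows from $\binom{m}{q-1}\le(em/(q-1))^{q-1}$ for $m$ large together with a direct check of the small values of $m$. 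With that supplied, your proof is complete; its level of rigour at this point is comparable to the paper's own ``one can verify'' step, but the claim does need the verification.
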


We postpone the proof of this technical lemma to Appendix \ref{app:lemavr}. 
 
Set 
\begin{equation}
 \label{eq:fndr}
\alfndr:=\left\{
\begin{array}{l l}
  {(12\nd^{3/2}\log({2\nd}))}/{(2nr+r^2+{8\nd^2})} , & \text{if } \nd \ge 100,\\ [.2cm]
   {8\nd^2}/{(2nr+r^2+8\nd^2)}, & \text{if } \nd < 100.
\end{array}\right . 
\end{equation} 

Note that $ 0<\alfndr<1 $ and $ \alfndr $ can be arbitrarily small for sufficiently large $ 2^nr $.  

\begin{thm} 
	\label{thm:td2}
	Suppose $d r \ge 2n$. Then 
	\begin{align}
	\label{eq:td2}
		2^{n-1} \left( n r + \rr +4 \nd^2\right) (1-\alfndr) & \le \td(\rcr)
		\\ \nonumber
		& \le 2^{n-1} \left( n r + \rr +4 \nd^2\right).  
	\end{align}  
\end{thm}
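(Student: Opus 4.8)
The plan is to work from the identity \eqref{eq:td}, which reduces the whole problem to estimating $\sum_{\ax\in G}l\ax$, and to split this sum according to whether $0\le x\le\rhlf$ or $\rhlf<x\le r-1$, using parts (a) and (b) of Theorem \ref{thm:Pax2} respectively. For $x\le\rhlf$ I would use the single formula $l\ax=2\nd-x-2\leap_1\ax$, keeping in mind that in the ``direct'' case $x\ge y_s$ one has $\leap_1\ax=\nd-x$ and hence $l\ax=x$, while in the ``non-direct'' case $x<y_s$ one has $\leap_1\ax\ge 1$; the upper half is handled by the analogous argument with $\leap_2\ax$ and $r-x$ in place of $\leap_1\ax$ and $x$. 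Since $y_s\le\nd-1$ always (by the bounds preceding \eqref{eq:yzhat}), the value $x=\nd-1$ is the natural break point between the two behaviours.

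For the upper bound (which carries no $\alfndr$ factor) I would argue as follows. For $x\le\nd-2$ the bound $l\ax\le 2\nd-2-x$ holds: it is immediate from $\leap_1\ax\ge1$ when $x<y_s$, and when $x\ge y_s$ it follows from $l\ax=x\le 2\nd-2-x$ (as $x\le\nd-2$); for $\nd-1\le x\le\rhlf$ one has $l\ax=x$ exactly. Summing these, and noting that the resulting expression is nondecreasing in $y_s$ and so is maximised at $y_s=\nd-1$, gives $\sum_{0\le x\le\rhlf}l\ax\le \nd(\nd-1)+\tfrac12\rhlf(\rhlf+1)$ for every $\a$, with an analogous bound for the upper half. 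Summing over the $2^n$ choices of $\a$, the two $x$-sums combine to contribute exactly $2^{n-1}\rr$ (after the routine parity bookkeeping for $\lfloor r^2/2\rfloor$), while the $\nd(\nd-1)$ terms contribute $2^{n+1}\nd(\nd-1)\le 2^{n-1}\cdot 4\nd^2$; together these give $\sum_{\ax}l\ax\le 2^{n-1}(\rr+4\nd^2)$, which is the required upper bound once inserted into \eqref{eq:td}.

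The lower bound is the substantive part. Here I would estimate the deficit $D:=2^{n-1}(\rr+4\nd^2)-\sum_{\ax}l\ax$, which by $l\ax=2\nd-x-2\leap_1\ax$ is controlled entirely by the sizes of $\leap_1\ax$ (and $\leap_2\ax$) over the non-direct ranges. The key observation is that $\leap_1\ax$ is essentially the largest gap, above $x$, among the ring-positions $y_1\le\cdots\le y_s$ together with the wrap-around gap $\nd-y_s$; grouping the coordinates of $\a$ into the $\nd$ blocks of length $d$ (block $v$ being ``hit'' exactly when $\a$ is non-zero on $\{dv+1,\dots,dv+d\}$), the event $\leap_1\ax\le g$ for $g:=\lceil\log\nd\rceil$ is guaranteed once the hit blocks have all gaps at most $g$. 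I would then apply Lemma \ref{lem:avr} with $m=\nd$ and $z=2^d-1$ (a hit block contributes $z=2^d-1$ ways and an empty block one way, so $(z+1)^{\nd}=2^{d\nd}$ matches $2^n$) to conclude that all but a fraction at most $(2/2^{d})^{\,g}=2^{(1-d)g}$ of the $\a$ enjoy this property. For such ``good'' $\a$ one gets $\leap_1\ax\le g+O(1)$, so their total contribution to $D$ is at most $O(2^{n}\nd\,g)=O(2^{n}\nd\log\nd)$ coming from the $\approx\nd$ non-direct values of $x$, while the exponentially rare ``bad'' $\a$ contribute at most (number of bad $\a$)$\times O(\nd^2)$, which is negligible. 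Bounding $D$ by the sum of these two contributions and comparing with $\alfndr\cdot 2^{n-1}(nr+\rr+4\nd^2)$ — using $2nr+r^2+8\nd^2\ge 2(nr+\rr+4\nd^2)$, so that this budget is of order $2^{n-2}$ times the numerator in \eqref{eq:fndr} — yields $D\le\alfndr\cdot 2^{n-1}(nr+\rr+4\nd^2)$, i.e.\ the stated lower bound. The two cases in \eqref{eq:fndr} correspond to whether the logarithmic refinement via Lemma \ref{lem:avr} is invoked ($\nd\ge 100$) or whether it is cheaper simply to bound the entire boundary contribution trivially ($\nd<100$).

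I expect the main obstacle to be the lower bound, and within it the precise translation of $\leap_1\ax$ into the maximum-gap statistic counted by Lemma \ref{lem:avr}: one must handle the partial last block when $d\nmid n$, treat separately the direct-case $\a$ (those concentrated in the low blocks, which are few), and carry the constants carefully enough to land under $\alfndr$. The upper bound, by contrast, is a routine if slightly tedious summation.
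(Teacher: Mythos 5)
Your proposal follows essentially the same route as the paper: the reduction via \eqref{eq:td}, the split of the $x$-range at $\nd-1$ (the paper's set $V$), the trivial bound $\leap_1\ax\ge 1$ for the upper bound, the application of Lemma \ref{lem:avr} with $z=2^d-1$ to the length-$d$ block structure of $\a$ for the lower bound, and the case split at $\nd=100$ all match the paper's proof. The only organizational differences — your additive ``deficit'' accounting and a single application of Lemma \ref{lem:avr} with $m=\nd$ in place of the paper's per-$x$ applications with $m=\nd-x-1$ — do not change the substance of the argument.
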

 
\begin{proof} 
Set $ \mnd :=\nd $ in this proof. For any $ \ax\in G $, we have $ \leap_1\ax= \leap_2(\a',r-x)$ in view of (\ref{eq:yzhat}) and so $ l\ax=l(\a',r-x) $, where $ \a=(a_1,a_2,\ldots,a_n) $ and $ \a'=(a_n,\ldots,a_2,a_1) $. So $\sum_{\a \in\ZZ_2^n} l\ax = \sum_{\a \in\ZZ_2^n} l(\a,r-x)  $ for any $1\le x \le \lfloor (r-1)/2\rfloor $.    
Setting $ \delta=0 $ if $ r $ is odd and $ \delta =1 $ if $r$ is even, we then have
\begin{equation}
\label{eq:totalSumtd2}
 	\sum_{\ax\in G} l\ax = 2 \sum_{\a\in \ZZ_2^n}\sum_{x=0}^{\lfloor (r-1)/2\rfloor}l\ax - \sum_{\a\in \ZZ_2^n}l(\a,0) + \delta \sum_{\a\in \ZZ_2^n}l(\a,\lfloor r/2\rfloor).  
\end{equation}
Note that $\sum_{\a\in \ZZ_2^n} l(\a,0) \le \sum_{\a \in \ZZ_2^n}(2\mnd-2)< 2^{n+1} \mnd$ and when $ r $ is even, $\sum_{\a\in \ZZ_2^n}l(\a,\lfloor r/2\rfloor)=\sum_{\a \in \ZZ_2^n} (r/2) = 2^{n-1} r $. 

Denote  
\[
	V := \{\ax\in G: y_s\le x  \le \lfloor (r-1)/2\rfloor,\; \mbox{where $ \hat{y} $ is as in \eqref{eq:yzhat}}\},
\]
where $s=\|\a\|$. So $ \ax\in V $ if and only if either   $x \ge \mnd-1 $, or $0\le x \le \mnd-2 $ and $ a_i=0 $ for $d(x+1)<i\le n $. Thus, for any $\ax \in V$, we have $  \leap_1\ax = \mnd -x $ and so $l\ax = x$ by Theorem \ref{thm:Pax2}.   
	Therefore, 
	\begin{equation}
	\label{eq:sumF2}
	\sum_{\a\in \ZZ_2^n}\sum_{x=0}^{ \lfloor (r-1)/2\rfloor}l\ax 
      = A_1 + A_2 + A_3,
	\end{equation}
where 
$$
A_1 = \sum_{x=0}^{ \mnd - 2 }	\sum_{\ax \notin V} ( 2 \mnd -x-2\leap_1\ax ),
$$
\begin{equation}
	\label{eq:tsV1}
A_2 = \sum_{x =0}^{ \mnd -2 } \sum_{\ax \in V} x = \sum_{x =0}^{ \mnd - 2} 2^{d(x+1)}x = \frac{2^{d \mnd }}{(2^d - 1)^2}\left( (2^d-1) (\mnd - 2) -1 \right) +\frac{2^{2d}}{(2^d - 1)^2},
\end{equation}
\begin{equation}
	\label{eq:tsV2}
A_3 = \sum_{\a\in \ZZ_2^n} \sum_{x = \mnd -1 }^{\lfloor (r-1)/2 \rfloor} x = 2^{n-1}  \left\lfloor {(r-1)}/{2} \right\rfloor \left\lfloor {(r+1)}/{2} \right\rfloor - 2^{n-1}   \left(\mnd^2 -3 \mnd +2 \right).
 \end{equation} 
In \eqref{eq:tsV1} we used the fact that, for a fixed $x$ with $0 \le x \le \mnd -2$, the number of elements  $ \ax \in V $ is equal to $2^{d(x+1)}$. Since $ 2^{d \mnd}/(2^d-1) \le 2^n$, we have 
\begin{equation}
	\label{eq:tsV3}
2^{n-d+1}\mnd \le A_2 \le 2^n (\mnd - 2)+4.
\end{equation} 
Since $\leap_1\ax \ge 1$, we have
	\begin{equation}
	\label{eq:ts3UBtd2}
		A_1 \le \sum_{\a \in \ZZ_2^n} \sum_{x =0}^{ \mnd - 2 } \left( 2 \mnd - x - 2 \right) 
		\le  2^{n} \sum_{x=0}^{\mnd -2}  \left(2\mnd-x-2 \right) = 2^{n-1} (3 \mnd^2 - 5 \mnd + 2) < 3\cdot 2^{n-1} \mnd^2.
	\end{equation} 

	Combining this with \eqref{eq:totalSumtd2}, \eqref{eq:sumF2}, \eqref{eq:tsV2} and \eqref{eq:tsV3}, we obtain 
	\begin{equation}
	\label{eq:ublax2}
		\sum_{\ax\in G }l\ax \le 2^{n-1}\left(\lfloor {r^2 }/{2} \rfloor + 4 \mnd^2 \right),
	\end{equation}
	which together with \eqref{eq:td} gives the upper bound in \eqref{eq:td2} for all possible $ q $. 

	Now we give a lower bound for $A_1$ and thus a lower bound for $ \td(\rcr)$.  
		
	\vspace{.1in}
	\textsf{Case 1:}~$ \mnd\ge 100 $. 
	For a fixed $x$ with $0\le x \le  \mnd -\sqrt{\mnd} $, denote $g= \lceil \log(\mnd -x-1) \rceil$. (Note that $ \mnd-x-1\ge9 $ and $ g>3 $ as $ \mnd\ge100 $.) Denote by $ W_{x,k} $ the set of $ [x,\mnd-1]_k $-sequences. Denote by $N_{x,k}$ the number of vertices $\ax$ with $ \hat{y}=(y_0,y_1,\ldots,y_t,y_{t+1},\ldots,y_{t+k},x) $ such that $ y_t\le x $ and the sequence $ x<y_{t+1}<\cdots<y_{t+k}\le \mnd-1 $ belongs to $ W_{x,k} $. Note that for any such $ \ax $, we have $ \ax \notin V $ and $ \leap_1\ax \le g+1 $. For a fixed sequence $ y_{t+1},\ldots,y_{t+k} $, the number of vertices $ \ax $ with $ \hat{y} $ as above is $ 2^{d(x+1)} (2^d-1)^k $ since $ (a_{1+dy_{j}},\ldots,a_{d+dy_{j}} )\ne (0,0,\ldots,0) $ for $t+1\le j \le t+k$. 
	 Thus, for any $x$ with $0\le x \le  \mnd -\sqrt{\mnd} $ and $ k $ with $(\mnd-x-1)/g \le k \le \mnd-x-1 $, we have
	\[
	\label{eq:zduk}
		{N_{x,k}} =  2^{d(x+1)} (2^d-1)^k |W_{x,k}|.
	\]

	On the other hand, $2\mnd - x - 2\leap_1\ax \ge 2\mnd-x-2(g+1)=  (2\mnd - x)(1- (g+1)/(\mnd -x/2 )) \ge (2\mnd - x)(1- (\log\mnd)/\sqrt{\mnd} ) $. Denote $ l_1=\lceil (\mnd -x-1)/g \rceil $  and $ l_2=\mnd-x-1 $. Applying Lemma \ref{lem:avr} to $ m=\mnd-x-1 $, we have
	\begin{equation}
	\nonumber
	\begin{array}{lll}
		 A_1 & \ge & \sum_{x=0}^{\mnd-\sqrt{\mnd} }\sum_{k=l_1}^{l_2} {N_{x,k}} (2\mnd-x) \left(1- \frac{\log\mnd}{\sqrt{\mnd}}\right)  \\ [0.2cm]
		&=& \sum_{x=0}^{\mnd -\sqrt{\mnd}} 2^{d(x+1)} (2\mnd-x) \left(1- \frac{\log\mnd}{\sqrt{\mnd}}\right) \sum_{k=l_1}^{l_2} (2^d-1)^k |W_{x,k}| \\ [0.2cm]
	& \ge &	 2^{d\mnd }\left(1- \frac{\log\mnd}{\sqrt{\mnd}}\right) \left(1-\frac{2}{2^{d\log{\mnd}}}\right) \sum_{x =0}^{\mnd -\sqrt{\mnd}}  (2\mnd-x)\\ [0.2cm]
	 & \ge & 3\cdot 2^{n-1}\mnd^2  \left(1- \frac{\log\mnd}{\sqrt{\mnd}}\right) \left(1-\frac{2}{\mnd^{d}}\right)\left(1-\frac{ \sqrt{\mnd}-1}{\mnd}-\frac{\mnd-2{\sqrt{\mnd}}}{2\mnd^2} \right)\\ [0.2cm]
	 & \ge & 3\cdot 2^{n-1}\mnd^2  \left(1- \frac{\log(2\mnd)}{\sqrt{\mnd}} \right). 
	\end{array} 
	\end{equation}
From this and \eqref{eq:totalSumtd2}, \eqref{eq:sumF2}, (\ref{eq:tsV2}) and \eqref{eq:tsV3}, we obtain
	\begin{equation}
	\label{eq:Slax2L}
	    \sum_{\ax\in G }l\ax \ge 2^{n-1}\left(\lfloor {r^2 }/{2} \rfloor + 4 \mnd^2 \right) \left(1-  \frac{ 12 \mnd^{3/2}  \log(2\mnd)}{r^2+8{\mnd}^2} \right) .
	\end{equation}
	Plugging this into \eqref{eq:td} yields the lower bound in \eqref{eq:td2} for $ q\ge 100 $. 

	\vspace{.1in}
	\textsf{Case 2:}~$ 1 \le \mnd <100 $. Suppose $ q\ge 2 $ first.  For any $ \ax\notin V $ with $ 0 \le x \le \mnd-2$, we have $ \leap_1\ax \le \mnd - x -1$  since $ y_s>x $. Moreover, for any fixed $ x $ with $ 0\le x\le \mnd-2 $, there are $ 2^n-2^{d(x+1)} $ elements $ \ax $ in $ G\setminus V $. This together with the fact that $ \sum_{i=0}^{k}  iz^i  \le 2kz^k$ for $z=2^d $ implies that 
	\[
		A_1 \ge \sum_{x=0}^{\mnd -2}\sum_{\ax \notin V} \left(  x + 2 \right) = \sum_{x=0}^{\mnd -2} (2^n-2^{d(x+1)}) ( x + 2 ) \ge  2^{n-1} (\mnd^2- \mnd) .  
	\] 
	This together with (\ref{eq:totalSumtd2}), \eqref{eq:sumF2}, \eqref{eq:tsV2} and \eqref{eq:tsV3}  yields    
	\begin{equation}
	\nonumber
		\sum_{\ax\in G} l\ax 	\ge 2^{n-1}  \lfloor {r^2 }/{2} \rfloor. 
	\end{equation} 
	On the other hand, if $ \mnd=1 $, then $ V=G $ and it can be verified that $ \sum_{\ax\in G }l\ax =2^{n-1}\rr $.  Hence,  for $ 1\le q<100 $, we have
\begin{equation}
\label{eq:Slax2_nd}
	 \sum_{\ax\in G} l\ax \ge 2^{n-1} \left(\lfloor {r^2 }/{2} \rfloor + 4\mnd^2\right)\left(1- \frac{8\mnd^2}{r^2+8\mnd^2} \right)  .
\end{equation} 
Combining this with \eqref{eq:td}, we obtain the lower bound in \eqref{eq:td2} for $ 1\le  q<100$. 
%
\end{proof}

\begin{rem*}
{\em 
(a) When $ 2^nr$ is large, $ \alfndr $ is small and so \eqref{eq:td2} gives
$$
\td(\rcr) \approx 2^{n-1} \left( n r + \rr +4 \nd^2\right).
$$  

(b) Define
 \begin{equation}
 \label{eq:gndr}
 	 \btndr:= \left \{
 	 \begin{array}{ll}
	 	{(12\nd^{3/2}\log(2{\nd}))}/{( r^2+{8\nd^2})} , & \text{if } \nd \ge 100,\\ [.2cm]
 	   {8\nd^2}/{(r^2+8\nd^2)}, & \text{if } \nd < 100.
 	 \end{array} \right . 
 \end{equation}  
  Since $ r\ge 2n/d $,  $\btndr $ can be arbitrarily small for sufficiently large $r$. In the next section we will use the following bounds obtained from the proof of Theorem \ref{thm:td2}:
\begin{equation}
\label{eq:bnds4lax2}
2^{n-1} \left ( \rr + 4\nd^2\right ) (1-\btndr) \le  \sum_{\ax\in G} l\ax \le 2^{n-1} \left ( \rr +  4 \nd^2 \right ). 
\end{equation} 
The lower bound here is sharp when $ n=d $ (see Case 2 in the proof of Theorem \ref{thm:td2}), while the upper bound is nearly tight for sufficiently large $2^nr$.
}
\end{rem*}

\section{Forwarding indices}
\label{sec:indices}

An all-to-all routing, abbreviated as a \emph{routing} in the sequel, {in} a connected graph is a set of oriented paths in the graph that contains exactly one path between every ordered pair of vertices. 
A \emph{shortest path routing} is a routing which consists of shortest paths. 

Given a graph $X$ and a routing $R$ in $X$, the \emph{load} of an edge  $e \in E(X)$ with respect to $R$  is the number of paths in $R$ passing through  $e$ in either direction. The maximum load on edges of $X$ with respect to $R$ is denoted by $\pi(X,R)$.  The \emph{edge-forwarding index} \cite{heydemann1997} of $X$ is defined as
\begin{equation}
\label{eq:piDef}
	\pi(X) = \min_R \pi(X,R),
\end{equation}
where the minimum is taken over all routings {$R$ in} $X$. Similarly, the load of {$v \in V(X)$ with respect to} a routing $R$  is the number of paths in $R$ with $v$ as an internal vertex. The maximum load on vertices of $X$ with respect to $R$ is denoted by  $\xi(X,R)$. The \emph{vertex-forwarding index} \cite{chung1987} of $X$ is  defined as  
\begin{equation}
\label{eq:xiDef}
	\xi(X) = \min_R \xi(X,R),
\end{equation}
with the minimum taken over all routings $R$ in $X$. The \emph{minimal edge-} and \emph{vertex-forwarding indices} of $X$, denoted by $\pi_m(X)$ and $\xi_m(X)$, are defined {in the same way as in} (\ref{eq:piDef}) and (\ref{eq:xiDef}) respectively, with the minimum  taken over all shortest path routings {in} $X$.   These four forwarding index problems are known to be NP-complete {for general graphs} \cite{kosowski2009,saad1993}.

In this section, $\alfndr$ and $ \btndr $ are as given in \eqref{eq:fndr} and \eqref{eq:gndr}, respectively.  

\subsection{Vertex-forwarding index}
\label{sec:vfi}

It is known \cite{heydemann1989} that any Cayley graph $X$ admits a shortest path routing that loads all vertices uniformly. It follows that $\xi(X)=\xi_m(X)= \sum_{v\in V(X)} \dist(u,v) - (|V(X)|-1)$ \cite[Theorem 3.6]{heydemann1989}, where $u$ is any fixed vertex of $X$.

\begin{thm} 
\label{thm:xi} We have  $ \xi(\rcr)=\xi_m(\rcr) $ and the following hold:  
	\begin{enumerate}[\rm (a)]  
		\item  if $dr =n$ and $ r\ge 2^{9} $, then 
		$$2^{n-2}r^2(2d+ 5)\left (1 - \frac{4\log^2 r+4}{2n+5r}\right)\le \xi(\rcr) \le 2^{n-2}r^2(2d+ 5)\left (1-\frac{12r-8}{2nr+5r^2}\right );$$ 
		and  if $dr =n$ and $3\le r< 2^{9} $, then 
		$$2^{n-2}(2nr+ r^2-4r) \le \xi(\rcr) \le 2^{n-2}(2nr+ 5r^2-12r+8);$$
		\item if $ dr\ge 2n $, then 
		$$ 2^{n-1}\!  \left( n r\! +\! \rr \!+\!4 \nd^2\right)\! (1-\alfndr) \le \xi(\rcr)\! \le\!  2^{n-1}\!  \left( n r\! +\! \rr \! +\!4 \nd^2\right) .$$  
	\end{enumerate}
\end{thm}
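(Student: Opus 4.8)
The conceptual core of the proof is that the vertex-forwarding index of $\rcr$ collapses to its total distance, which we have already estimated in Section \ref{sec:td}. The plan is to exploit this directly. First I would invoke the fact recalled just above the statement: since $\rcr = \cay(G,S)$ is a Cayley graph (Definition \ref{def:rcr}), it is vertex-transitive and, by \cite[Theorem 3.6]{heydemann1989}, admits a shortest-path routing that loads all vertices uniformly. Hence $\xi(\rcr) = \xi_m(\rcr) = \td(\rcr) - (|V(\rcr)| - 1)$, and by Lemma \ref{lem:basic} we have $|V(\rcr)| = 2^n r$, so that
\[
\xi(\rcr) = \xi_m(\rcr) = \td(\rcr) - (2^n r - 1).
\]
This single identity yields $\xi = \xi_m$ at once and reduces the whole theorem to inserting the bounds on $\td(\rcr)$ from Theorems \ref{thm:td1} and \ref{thm:td2}.

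Next I would split into the two regimes, feeding the bounds of Theorem \ref{thm:td1} into the identity when $dr = n$ and those of Theorem \ref{thm:td2} when $dr \ge 2n$. Because the subtracted vertex count $2^n r - 1$ is of the same order of magnitude as the leading terms of $\td(\rcr)$, it is cleaner not to quote the packaged $\td$ bounds verbatim but to return to the decomposition $\td(\rcr) = 2^{n-1} n r + \sum_{\ax \in G} l\ax$ of \eqref{eq:td}, which rewrites the identity as
\[
\xi(\rcr) = 2^{n-1}(n-2) r + 1 + \sum_{\ax \in G} l\ax,
\]
and then to insert the estimates for $\sum_{\ax \in G} l\ax$ established inside the proofs of the two total-distance theorems --- namely \eqref{eq:Slax1UB}, \eqref{eq:Slax1R} and \eqref{eq:Slax1b} when $dr = n$, and \eqref{eq:bnds4lax2} when $dr \ge 2n$. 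In the case $dr = n$ I would also use $dr = n$ to rewrite $r^2(2d+5) = r(2n + 5r) = 2nr + 5r^2$, which is exactly the normalization appearing in the statement and lets the two families of correction factors be compared directly.

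The upper bounds are then essentially immediate: since $-(2^n r - 1) < 0$, subtracting it only strengthens the upper estimates on $\td(\rcr)$, and a short manipulation recasts them in the displayed $(1 - \cdot)$ form. The genuine labor --- and the step I expect to be the main obstacle --- is the lower bounds, where the subtraction of $2^n r - 1$ visibly reshapes the correction factors (in case (a) the $\td$ factor $\tfrac{20\log^2 r}{2n+5r}$ is replaced by $\tfrac{4\log^2 r + 4}{2n+5r}$, and in case (b) $\btndr$ is replaced by $\alfndr$, whose denominator carries the extra summand $2nr$ that reflects the subtracted vertex count). Matching these precise factors means one cannot simply quote the simplified asymptotic lower bounds of Section \ref{sec:td}; instead I would retain the explicit lower-order terms in the intermediate sum estimates and verify the resulting inequalities by elementary but careful bookkeeping, keeping track of the floors and the small multiplicative slack factors. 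No new idea beyond the total-distance analysis of Section \ref{sec:td} is required; the difficulty is purely in the constant-accounting needed to land on the stated forms.
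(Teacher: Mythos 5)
Your proposal is correct and follows exactly the paper's argument: the paper's entire proof is the identity $\xi(\rcr)=\xi_m(\rcr)=\td(\rcr)-2^nr+1$ (from \cite[Theorem 3.6]{heydemann1989}) combined with the bounds of Theorems \ref{thm:td1} and \ref{thm:td2}. Your additional remarks about returning to \eqref{eq:td} and the intermediate estimates \eqref{eq:Slax1UB}, \eqref{eq:Slax1R}, \eqref{eq:Slax1b} and \eqref{eq:bnds4lax2} simply make explicit the bookkeeping the paper leaves to the reader.
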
 

\begin{proof}
	Since $\xi(\rcr)=\xi_m(\rcr)= \td(\rcr)- 2^nr + 1$, the results follow from Theorems \ref{thm:td1} and \ref{thm:td2}.  
\end{proof} 

As a consequence of Theorem \ref{thm:xi} and Lemma \ref{lem:ccc}, we obtain the following corollary, of which part (c) gives the known result on the vertex-forwarding index of the cube-connected cycles, that is, $ \xi(CC_n)=\xi_m(CC_n)=7n^22^{n-1}(1-o(1)) $ \cite{yan2009}. 

\begin{cor}
	\begin{enumerate}[\rm (a)]
		\item $\xi(Q_n( d,n)) =  \xi_m(Q_n( d,n)) =  2^{n-1} (n^2+\lfloor n^2/2\rfloor + 4\nd^2 ) \; (1-O((\log n)\; /(\sqrt{\nd}(3d^2+8)))$ for $d\ge 2$; 
		\item if $ r\ge 2^{9} $, then $ \xi(COR(d,r))= \xi_m(COR(d,r)) $ and 
			$$2^{dr-2}r^2(2d+ 5)\left (1 - \frac{4\log^2 r}{(2d+5)r}\right)\le \xi(COR(d,r)) \le 2^{dr-2}r^2(2d+ 5)\left (1-\frac{12r-8}{(2d+5)r^2 }\right );$$ 
				and  if  $ 3\le r< 2^{9} $, then  $ \xi(COR(d,r))= \xi_m(COR(d,r)) $ and
				$$2^{dr-2}(2dr^2+ r^2-4r) \le \xi(COR(d,r)) \le 2^{dr-2}(2dr^2+ 5r^2-12r+8);$$
		\item  If $3\le n<2^9$, then $\xi(CC_n) = \xi_m(CC_n)$ and
		$$ 2^{n-2}(3n^2-4n)\le  \xi(CC_n) \le  2^{n-2}(7n^2-12n+8); $$
		and if $ n\ge  $, then $\xi(CC_n) = \xi_m(CC_n)$ and (\hspace{1sp}\cite{yan2009}) 
		$$ 7\cdot 2^{n-2}n^2 (1-(4\log^2 n)/(7n)) \le  \xi(CC_n) \le 7\cdot 2^{n-2}n^2 (1-(12n-8)/(7n^2)). $$ 
	\end{enumerate}
\end{cor}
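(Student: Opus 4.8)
The plan is to obtain all three parts by transporting the bounds of Theorem~\ref{thm:xi} through the isomorphisms of Lemma~\ref{lem:ccc}; no new argument is required beyond identifying, in each case, the correct branch of Theorem~\ref{thm:xi} and simplifying the resulting expressions. In every case the equality $\xi = \xi_m$ is automatic: each of the three networks is isomorphic to some $\rcr$, hence is a Cayley graph, and isomorphic graphs share the same (minimal) vertex-forwarding index, so these equalities descend from the corresponding equality asserted in Theorem~\ref{thm:xi}.

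I would begin with part~(c), which is the most transparent. By Lemma~\ref{lem:ccc} we have $CC_n \cong Q_n(1,n)$, so in the parametrisation of Theorem~\ref{thm:xi} the dimension is $n$, the cube parameter is $1$, and the ring parameter is $n$; since $1 \cdot n = n$ we are in the case $dr = n$ and Theorem~\ref{thm:xi}(a) applies. Putting $d = 1$ and $r = n$ gives $2d + 5 = 7$, $2n + 5r = 7n$, $2nr + 5r^2 = 7n^2$, and in the range $3 \le n < 2^9$ one computes $2nr + r^2 - 4r = 3n^2 - 4n$ and $2nr + 5r^2 - 12r + 8 = 7n^2 - 12n + 8$, reproducing the stated bounds directly; for $n \ge 2^9$ the same substitution yields the displayed form $7 \cdot 2^{n-2} n^2(1 - \cdots)$ after collecting the lower-order terms in the numerators of the error factors.

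Next I would treat part~(b). By Lemma~\ref{lem:ccc}, $COR(d,r) \cong Q_{dr}(d,r)$, so now the ambient dimension is $dr$ while the cube and ring parameters are $d$ and $r$; the product of the cube and ring parameters again equals the dimension, so we are once more in the case $dr = n$ with $n = dr$ and Theorem~\ref{thm:xi}(a) applies. Substituting $n = dr$ (whence $2^{n-2} = 2^{dr-2}$, $2n + 5r = (2d+5)r$, and $2nr + 5r^2 = (2d+5)r^2$) into the two ranges of Theorem~\ref{thm:xi}(a) gives the bounds asserted in part~(b).

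Part~(a) is where the real work lies, and I expect the error-term estimate to be the main obstacle. Here $Q_n(d,n)$ has dimension $n$, cube parameter $d$, and ring parameter $n$; since $d \ge 2$ we have $dr = dn \ge 2n$, so Theorem~\ref{thm:xi}(b) is the relevant branch, and putting $r = n$ turns its main term $2^{n-1}(nr + \rr + 4\nd^2)$ into $2^{n-1}(n^2 + \lfloor n^2/2\rfloor + 4\nd^2)$, matching the claim. It remains to show that the factor $\alfndr$ of \eqref{eq:fndr}, evaluated at $r = n$, is $O\!\big((\log n)/(\sqrt{\nd}(3d^2+8))\big)$, where this single $O(\cdot)$ controls the two-sided gap between the lower bound (main term times $1-\alfndr$) and the upper bound (main term). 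The key estimate is $n/d \le \nd < n/d + 1$, which makes the denominator $3n^2 + 8\nd^2$ (the value of $2nr + r^2 + 8\nd^2$ at $r = n$) comparable to $\nd^2(3d^2+8)$. For $\nd \ge 100$ this turns the defining expression into one of order $\nd^{3/2}\log(2\nd)/(\nd^2(3d^2+8)) = \log(2\nd)/(\sqrt{\nd}(3d^2+8))$, and since $\nd \le n$ gives $\log(2\nd) = O(\log n)$ this is exactly the claimed order. The case $\nd < 100$ must be checked separately: there $\alfndr$ is of order $8/(3d^2+8)$, which is absorbed into the same bound because $\sqrt{\nd} < 10 = O(\log n)$, so $8/(3d^2+8) = O((\log n)/(\sqrt{\nd}(3d^2+8)))$. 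Confirming that one $O(\cdot)$ expression covers both regimes of $\alfndr$ — and that the implied constant is absolute rather than secretly depending on $d$ — is the single point demanding care.
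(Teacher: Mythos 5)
Your proposal matches the paper's (implicit) proof exactly: the corollary is presented as an immediate consequence of Theorem \ref{thm:xi} and Lemma \ref{lem:ccc}, obtained by precisely the parameter substitutions you perform ($d=1,r=n$ for $CC_n$; $n=dr$ for $COR(d,r)$; $r=n$, branch (b), for $Q_n(d,n)$), and your two-regime analysis of $\alfndr$ in part (a) is the right way to justify the single $O\bigl((\log n)/(\sqrt{\nd}(3d^2+8))\bigr)$ factor with an absolute implied constant. The only friction is cosmetic: the printed lower bounds in parts (b) and (c) drop the lower-order $+4$ from the numerator $4\log^2 r+4$ of Theorem \ref{thm:xi}(a), which your substitution correctly retains.
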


\subsection{Edge-forwarding index}
\label{sec:efi}

We will use the theory of orbit proportional Cayley graphs \cite{shah2001} in our study the edge-forwarding index problem for $\rcr$. Given a graph $X =(V,E)$ and a subgroup $ H$ of $\Aut(X) $, the \emph{$H$-orbit} on $E(X)$ containing a given $e\in E(X) $ is $ \{g(e): g\in H\} $, and the \emph{stabiliser} of $ u \in V(X)$ in $ H $ is $H_u= \{g\in H: g(u)=u \} $. Define 
$$
H_{u,v} = (H_u)_v = \{g\in H: g(u)=u,g(v) =v \} 
$$ 
for distinct vertices $u,v \in V(X)$. 

Let $ \bar{R} = \cup_{(u,v)\in V \times V} \bar{R}_{uv}$ be the set of all paths in $X$, where $\bar{R}_{uv}$  {is the set} of all $uv $-paths  {in $X$}. A \textit{uniform flow} \cite{shah2001} in $X$ is a function $f: \bar{R}  \rightarrow [0,1]$ such that $\sum_{P \in \bar{R}_{uv}} f(P) =1$ for any distinct vertices $u, v \in V$. 	A path $ P $ in $ X $ is \emph{active} (under $ f $) \cite{shah2001} if $ f(P) >0$. The flow $f$ is called \emph{integral} if $f(P) \in \{0,1\}$ for any $P \in\bar{R} $. An integral uniform flow is essentially the same as an all-to-all routing. 
Given a subgroup $ H \le \Aut(X)$,  a uniform flow $f$ is called  {\emph{$H$-invariant}} if $f(P) = f(g(P))$ for all $ g \in H$ and $ P \in \bar{R}$, where $g(P)$ is the image of $P$ under $g$. 

\begin{lem}(\hspace{1sp}\cite[Theorem 1]{shah2001})
	\label{lem:intflow}
	Let $X = (V, E)$ be a graph and $H$ a subgroup of $\Aut(X)$. Then there exists an $H$-invariant uniform flow $f^*$ in $X$ such that any active path under $ f^*$ is a shortest path and the number of active paths is at most $ | H_{uv} | $.  
\end{lem}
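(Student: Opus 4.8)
The plan is to construct $f^*$ explicitly by an orbit/averaging argument, exploiting the fact that every $g\in H$ is an automorphism and hence preserves distances: $g$ carries the set of shortest $uv$-paths bijectively onto the set of shortest $(g(u),g(v))$-paths. First I would record this equivariance precisely. The group $H$ acts on the ordered pairs $V\times V$ and simultaneously on the set $\bar{R}$ of all paths, and because $\dist$ is $H$-invariant this action restricts to a bijection between the shortest-path sets of any two pairs lying in the same $H$-orbit. Consequently, to obtain an $H$-invariant flow it suffices to prescribe the values of $f^*$ on one representative pair from each $H$-orbit of $V\times V$ and then transport them equivariantly.

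Next, fix an $H$-orbit on $V\times V$ and a representative $(u,v)$ in it, and let $S_{uv}$ denote the (nonempty) set of shortest $uv$-paths. The stabiliser $H_{uv}$ acts on $S_{uv}$, again because automorphisms preserve distances and fix both endpoints. I would choose a single path $P_0\in S_{uv}$, let $O$ be its $H_{uv}$-orbit, and set $f^*(P)=1/|O|$ for $P\in O$, with $f^*(P)=0$ for the remaining paths in $S_{uv}$ and for all non-shortest $uv$-paths. By construction this is a probability distribution on $\bar{R}_{uv}$ that is constant on $H_{uv}$-orbits, hence $H_{uv}$-invariant, and it is supported on exactly $|O|$ paths.

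Then I would extend $f^*$ to the whole $H$-orbit of $(u,v)$ by equivariance: for $g\in H$ and a path $Q$ between $g(u)$ and $g(v)$, put $f^*(Q):=f^*(g^{-1}(Q))$. The step requiring care --- and the main obstacle --- is the well-definedness of this extension: if $g(u,v)=g'(u,v)$ then $g^{-1}g'\in H_{uv}$, so the two candidate values agree \emph{precisely} because the distribution chosen on $S_{uv}$ was $H_{uv}$-invariant. This is where the orbit structure must be matched consistently across the entire orbit of pairs. Granting well-definedness, the relation $f^*(g(P))=f^*(P)$ holds for all $g\in H$, so $f^*$ is $H$-invariant; and since each $g$ sends a probability distribution on $\bar{R}_{uv}$ to a probability distribution on $\bar{R}_{g(u)g(v)}$, we get $\sum_{P\in\bar{R}_{u'v'}}f^*(P)=1$ for every pair $(u',v')$, so $f^*$ is a uniform flow.

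Finally I would verify the two remaining claims. Every active path lies in some $S_{u'v'}$ by construction, so it is a shortest path. For the count, an active $u'v'$-path with $(u',v')=g(u,v)$ is the $g$-image of a path in $O$, so the number of active $u'v'$-paths equals $|O|$; by the orbit--stabiliser theorem $|O|$ divides $|H_{uv}|$, and since the stabilisers of pairs in a common $H$-orbit are conjugate we have $|H_{uv}|=|H_{u'v'}|$, which yields the bound $|H_{u'v'}|$ on the number of active $u'v'$-paths. (One could shrink the support further by spreading the weight over an $H_{uv}$-orbit of minimum size, but the uniform-on-one-orbit choice already delivers the stated bound.)
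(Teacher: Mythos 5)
The paper does not actually prove this lemma --- it is quoted from Shahrokhi and Sz\'{e}kely \cite[Theorem 1]{shah2001} --- so there is no in-paper argument to compare against; your orbit-averaging construction is the standard proof of that theorem and is correct. In particular, the well-definedness of the equivariant extension, which you rightly flag as the delicate point, does go through exactly as you argue (the weight distribution on $\bar{R}_{uv}$ is constant on the $H_{uv}$-orbit $O$ and zero elsewhere, so the two candidate values differ by an element of $H_{uv}$ and hence agree), and the set of active $u'v'$-paths is precisely the image $g(O)$ of that single orbit, whose size $|O|$ divides $|H_{uv}|=|H_{u'v'}|$ by the orbit--stabiliser theorem and conjugacy of stabilisers, giving the stated bound.
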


The $H$-invariant uniform flow in Lemma \ref{lem:intflow} is integral if $ |H_{uv}| =1 $ for every pair $u, v$ of distinct vertices. Denote by $E_1,E_2,\dots,E_k$ the $H$-orbits on $E(X)$. Of course $\{E_1,E_2,\dots,E_k\}$ is a partition of $E(X)$. We say that $X$ is \emph{$H$-orbit proportional} \cite{shah2001} if for any shortest $uv$-path $P$ and any $uv$-path $P' $ in $X$,  
\begin{equation}
\label{eq:orbPr}
	|E(P) \cap E_i | \le | E(P') \cap E_i|,\quad i=1,2,\ldots, k .
\end{equation}
In particular, for $ 1\le i \le k $, $ |E(P) \cap E_i | = \min | E(P') \cap E_i| $ with the minimum running over all $ P' \in \bar{R}_{uv} $. Moreover, for $ 1\le i \le k $, we have $|E(P) \cap E_i | = | E(P') \cap E_i|$ if both $P$ and $P'$ are shortest $uv$-paths. Not all Cayley graphs are orbit proportional. 

It was proved in \cite[Theorem 4]{shahrokhi1995} that if $ X $ is $H$-orbit proportional and $f^* $ is an $H$-invariant uniform flow in $ X $ such that any active path is a shortest path (the existence of $ f^* $ is guaranteed by Lemma \ref{lem:intflow}), then  $\pi(X) = \max_{e\in E(X)} \sum_{P:e \in P} f^*(P)$.  
On the other hand, for $ e,e'\in E_i$, we have $\sum_{P:e \in P} f^*(P) = \sum_{P:e' \in P} f^*(P)$ since $ f^* $ is $H$-invariant. Therefore, what was proved in \cite[Lemma 5]{shahrokhi1995} is the following result:   
\begin{equation}
\label{eq:fP}
	\pi(X)=\pi_m(X) = \max_{e\in E(X)} \sum_{P:e \in P} f^*(P)= \max_{1\le i \le k} \dfrac{\sum_{(u,v)} |E(P_{uv}) \cap E_i| }{|E_i|},
\end{equation}
where $P_{uv}$ is any shortest $uv$-path in $ X $.

Now let us return to the edge-forwarding index problem for $\rcr$. Since $\rcr$ is vertex-transitive, by \cite{heydemann1989}, we have 
\begin{equation}
\label{eq:trivial}
\pi(\rcr) \ge |G| \ \td(\rcr)/ |E|.
\end{equation}
Define
\begin{align*}
	& E_0 := \{ \{\ax, (\a, x+1)\}: \ax \in G  \},\\ \nonumber
	& E_{i} := \{ \{\ax, (\a + \e_{i+dx}, x) \} : \ax\in G\}, \; 1\le i \le d.\nonumber
\end{align*}
Then $ |E_0| = 2^{n} r$ {and} $ |E_i| = 2^{n-1}r $ for $1 \le i \le d$. It can be verified that $\{E_0, E_{ 1},\dots,E_{ d}\}$ is a partition of the edge set of {$\rcr$}. 

Since $\rcr$ is a Cayley graph on $G$, $G$ can be viewed as a subgroup of $\Aut(\rcr)$ (see Section \ref{sec:term}). So we can talk about $G$-orbits on $ E(\rcr) $. 

\begin{lem}
\label{lem:edgeOrb}
	$ E_0, E_{1},\dots,E_{d} $ are {the $G$-orbits on $E(\rcr)$}.
\end{lem}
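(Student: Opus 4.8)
The plan is to exploit the left-regular action of $G$ on $\rcr=\cay(G,S)$ recalled in Section~\ref{sec:term}: each $g\in G$ acts as the automorphism $\hat g:u\mapsto g^{-1}u$, and the group of all such $\hat g$ is a vertex-transitive subgroup of $\Aut(\rcr)$ isomorphic to $G$. The single observation driving the whole argument is that this action preserves the connection-set element realising an edge. Indeed, for an edge $\{u,us\}$ with $s\in S$ one computes $(g^{-1}u)^{-1}(g^{-1}us)=u^{-1}us=s$, so $\hat g$ sends $\{u,us\}$ to the edge $\{g^{-1}u,g^{-1}us\}$ with the same difference $s$.

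First I would record that, by the operation of $G$, the block $E_0$ is exactly the set of edges of the form $\{\ax,\ax(\bo,1)\}$ (equivalently $\{\ax,\ax(\bo,r-1)\}$, since $(\bo,r-1)=(\bo,1)^{-1}$), while for $1\le i\le d$ the block $E_i$ is exactly the set of edges $\{\ax,\ax(\e_i,0)\}$, because $\ax(\e_i,0)=(\a+\e_{i+dx},x)$. Thus each $E_i$ is the set of edges realised by a single pair $\{s,s^{-1}\}$ with $s\in S$. By the displayed computation this description is preserved by every $\hat g$, so each $E_i$ is $G$-invariant, i.e.\ a union of $G$-orbits.

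Next I would show that $G$ acts transitively on each $E_i$. Given two edges $\{u,us\},\{v,vs\}\in E_i$ sharing the same $s$, the element $g=uv^{-1}$ satisfies $\hat g(u)=g^{-1}u=v$ and $\hat g(us)=vs$, so $\hat g$ maps the first edge onto the second; hence $E_i$ is a single $G$-orbit. Combining the two steps with the already-noted fact that $\{E_0,E_1,\dots,E_d\}$ partitions $E(\rcr)$, no orbit can meet two distinct blocks and each block is one full orbit; therefore $E_0,E_1,\dots,E_d$ are precisely the $G$-orbits on $E(\rcr)$.

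The argument is essentially formal, so I do not expect a serious obstacle; the one point requiring care is that the invariant preserved by left multiplication is the connection-set element $(\e_i,0)$ rather than the geometric direction $\e_{i+dx}$, which does vary as $x$ runs through $\ZZ_r$ within a single orbit (cf.\ Lemma~\ref{lem:rpit}). A secondary point, which I would dispose of by a short case check (or simply read off from the partition remark), is that distinct blocks are genuinely disjoint: equating an edge of $E_i$ with one of $E_{i'}$ forces $\e_{i+dx}=\e_{i'+dx}$ and hence $i=i'$, since $1\le i,i'\le d\le n$.
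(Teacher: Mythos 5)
Your proposal is correct and follows essentially the same route as the paper: the paper's (much terser) proof simply declares each $E_i$ to be the $G$-orbit of a representative edge ($\{(\boldsymbol{0}_n,0),(\boldsymbol{0}_n,1)\}$ for $E_0$ and $\{(\boldsymbol{0}_n,0),(\e_i,0)\}$ for $E_i$) and then invokes the partition fact, exactly as you do. Your write-up merely supplies the standard justification the paper leaves implicit, namely that left multiplication preserves the connection-set element $u^{-1}(us)=s$ of an edge and acts transitively on the edges realising a fixed pair $\{s,s^{-1}\}$; your cautionary remark that the preserved invariant is $(\e_i,0)$ rather than the varying geometric direction $\e_{i+dx}$ is exactly the right point to flag.
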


\begin{proof}
{Since $({\bf 0}_n, 0)$ is the identity element of $G$, by Definition \ref{def:rcr}, $E_0$ is the $G$-orbit on $E(\rcr)$ containing $\{({\bf 0}_n, 0), ({\bf 0}_n, 1)\}$ and $E_i$ is the $G$-orbit on $E(\rcr)$ containing $\{({\bf 0}_n, 0), ({\bf e}_i, 0)\}$, $1 \le i \le d$. Since $\{E_0, E_1, \dots, E_{ d}\}$ is a partition of $E(\rcr)$, these are all $G$-orbits on $E(\rcr)$.} 
\end{proof}

\begin{lem} \label{lem:orbPrp}
	$\rcr$ is $G$-orbit proportional if and only if $ n\equiv0\mod{d} $.
\end{lem}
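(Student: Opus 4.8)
The plan is to use vertex-transitivity to reduce to paths emanating from $\oo$, and to analyse the ring orbit $E_0$ and the cube orbits $E_1,\dots,E_d$ (identified as the $G$-orbits in Lemma \ref{lem:edgeOrb}) separately. First I would record a preliminary fact used throughout: the ring-coordinate projection of any $\oo$-to-$\ax$ path is a walk on the $r$-cycle from $0$ to $x$ whose length equals the number of ring edges, and the double-flip reduction described before Lemma \ref{lem:axSeq} deletes a pair of same-direction cube edges while leaving the number of ring edges unchanged. Iterating this reduction converts any $\oo$-to-$\ax$ path into one with exactly $\|\a\|$ cube edges and the \emph{same} number of ring edges; comparing its length with $\dist(\oo,\ax)=\|\a\|+l\ax$ then forces every $\oo$-to-$\ax$ path to use at least $l\ax$ edges of $E_0$, with equality for shortest paths. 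Thus $E_0$ never obstructs orbit proportionality, for any $n$ and $d$, and only the cube orbits remain.

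For sufficiency, assume $n\equiv 0\mod{d}$. I would first pin down the cube orbits: the $E_i$-edge at $\ax$ has direction $\e_{i+dx}$, so $i+dx\equiv i\mod{d}$, and Lemma \ref{lem:rpit} shows that when $d\mid n$ the orbit $E_i$ ($1\le i\le d$) is exactly the set of cube edges flipping a coordinate $j$ with $j\equiv i\mod{d}$. Since the residues mod $d$ partition $\{1,\dots,n\}$, the cube orbits partition the coordinates. A parity argument on the $\ZZ_2^n$-coordinate then finishes the job: in any $\oo$-to-$\ax$ path each coordinate $j$ with $a_j=1$ is flipped an odd, hence positive, number of times, so the path uses at least $w_i(\a):=|\{j:\,j\equiv i\mod{d},\ a_j=1\}|$ edges of $E_i$, whereas a shortest path flips each such coordinate exactly once (by the discussion preceding Lemma \ref{lem:axSeq}) and so uses exactly $w_i(\a)$ of them. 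Together with the $E_0$ bound this gives the defining inequality for every orbit, proving $\rcr$ is $G$-orbit proportional.

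For necessity, assume $n\not\equiv 0\mod{d}$. Since $dr\equiv 0\mod{n}$ and $dr=n$ would force $d\mid n$, we have $dr\ge 2n$, whence $p:=\lfloor n/d\rfloor$ satisfies $1\le p<r/2$. Applying Lemma \ref{lem:rpit} to the direction $\e_1$, coordinate $1$ can be flipped at ring position $0$, lying in $E_1$, and at ring position $p$, lying in $E_{1+(n \bmod d)}$; these orbits are distinct because $1\le n \bmod d\le d-1$. Taking $\ax=(\e_1,p)$, I would verify $\dist(\oo,(\e_1,p))=1+p$ and exhibit two shortest paths: one that flips coordinate $1$ at position $0$ and then runs along the $\e_1$-ring to position $p$, and one that runs to position $p$ along the $\bo$-ring and flips there. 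The former uses one edge of $E_1$, the latter none, so $|E(P)\cap E_1|=1>0=|E(P')\cap E_1|$ with $P$ shortest, violating \eqref{eq:orbPr}; hence $\rcr$ is not $G$-orbit proportional.

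The routine parts are the explicit description of the two paths and the orbit identification via Lemma \ref{lem:rpit}. The step I expect to require the most care is the $E_0$ lower bound for non-shortest paths, namely arguing that expending extra cube edges can never shorten the ring walk; this is exactly where it matters that the double-flip reduction preserves the ring-edge count rather than merely not increasing the overall length, and getting that bookkeeping precisely right is the crux. A minor but necessary check is the inequality $p<r/2$, which guarantees that both candidate paths in the necessity argument are genuinely shortest.
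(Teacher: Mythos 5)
Your proof is correct and follows essentially the same route as the paper's: for sufficiency you classify the cube orbits by direction residues mod $d$ and use the odd-number-of-flips parity argument together with the $l\ax$ lower bound on ring edges, and for necessity you exhibit two shortest paths to a common vertex whose cube edges lie in different orbits. The only cosmetic difference is your choice of witness $(\e_1,\lfloor n/d\rfloor)$ with two one-cube-edge paths, versus the paper's round-trip paths from $\oo$ to $(\e_j,0)$; both instantiate the same idea.
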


\begin{proof}
Suppose that $ n\equiv0\mod{d} $. Let  $\ax \in G$ and let $P $ and $P' $ be paths from $ \oo $ to $ \ax $ in $\rcr$. Suppose that $P$ is a shortest path  so that $ |P|=\|\a\| +l\ax $ by Lemma \ref{lem:axSeq}. For $1\le j \le n$, if $ a_j =1 $ (respectively, $ a_j =0 $), then $P'$ contains an odd (respectively, even) number of cube edges $\{(\b,y),(\b+\e_j,y)\} $ in direction $\e_j$, and $P$ contains exactly one (respectively, zero) such edges by Lemma \ref{lem:axSeq}.  On the other hand, we claim that  any two  cube edges  $\{(\b,y),(\b+\e_j,y)\} $ and  $\{(\b',y'),(\b'+\e_j,y')\}$ in the same direction $\e_j$ are in the same  $G$-orbit $E_k$ for some $k$. In fact, by Definition \ref{def:rcr}, we have  $ j \in D(y) \cap D({y'}) $ and so $k + dy \equiv k' + dy' \mod{n} $ for some $1\le k,k' \le d$. Since $ n \equiv 0 \mod{d} $ by our assumption, $  k = k' $ and hence $(\b+\e_j,y) = \by (\e_k, 0)$ and $ (\b'+\e_j,y') = (\b',y')(\e_k, 0)$. In other words, both $\{(\b,y),(\b+\e_j,y)\}$ and  $\{(\b',y'),(\b'+\e_j,y')\}$ are in the $G$-orbit $E_k$. This together with what we proved above implies that $ | E(P) \cap E_i | \le | E(P') \cap E_i| $ for $1 \le i\le d$. Moreover, $|E(P) \cap E_0| \le |E(P') \cap E_0|$, for otherwise there exists an $\ax $-sequence $ \hat{x} $ obtained from segments of $ P' $ such that $ l(\hat{x}) < l\ax$, which is a contradiction. Therefore, (\ref{eq:orbPr}) is satisfied and so $\rcr$ is $G$-orbit proportional.  
	
Suppose that $ n\not\equiv0\mod{d} $. Then $ dr\ge 2n $ by \eqref{eq:grp}.  By Lemma \ref{lem:rpit}, there exist $ 1\le j\le n $, $ 1\le k,k'\le d $ and $ y,y'\in \ZZ_r $ such that $ y\ne y' $, $ k \ne k' $ and $ j = dy + k = dy'+k' \mod{n} $. Let $ y $, $ 0\le y <r$, be such that $ j\in D(y) $ and $ \min\{ y,r-y \} \le \min\{ y',r-y' \} $ for any $ 0\le y'<r $ with $ j\in D(y') $. The path $(\bo,0), \ldots, (\bo,y), (\e_j,y),\ldots,(\e_j,0) $ is a shortest path with exactly  one  edge in $ E_k $, and the path $ (\bo,0), \ldots, (\bo,y'), (\e_j,y'), \ldots, (\e_j,0) $ has exactly  one  edge in $ E_{k'} $. Since the first path has an edge in $ E_k $ while the second path  does not  contain  any edge in $ E_k $, (\ref{eq:orbPr}) is not satisfied by these paths and $E_k$. Hence $\rcr$ is not $G$-orbit proportional. 
\end{proof}

In view of the discussion at the beginning of Section \ref{sec:shrtpth}, any set $\{P_{\ax} : \ax \in G\}$ of shortest paths in $\rcr$ {starting from $\oo$} gives rise to a shortest path routing in $\rcr$ defined by 
\begin{equation}
\label{eq:R}
\{ \by P_{\ax}  : \ax,\by \in G\}.
\end{equation}

\begin{thm}\label{thm:pi}  
	\begin{enumerate}[\rm (a)]
		\item Suppose $dr =n $. We have $ \pi(\rcr) =\pi_m(\rcr)  $ and the following hold: 
		\begin{enumerate}[\rm (i)]
			\item if $ 3\le r \le 6 $, then
			 \[
			 	\pi(\rcr)=2^{n} r^2 ;
			 \]
		 	\item if $ 7 \le r < 2^{9}$,  then 
		 	\begin{equation}
			 \nonumber
			 2^{n} r^2\le \pi(\rcr) \le  2^{n-2} (5r^2-8r+8); 
			 \end{equation}
			 \item if $r \ge 2^{9} $, then
			\begin{equation}
				\nonumber
				2^{n-2} r^2\max\left \{4,5(1 - (4 \log^2 r/r))\right \} \le \pi(\rcr) \le   2^{n-2} (5r^2-8r+8) .
			\end{equation} 
		\end{enumerate}
		\item Suppose $dr \ge 2 n $ and $ n\equiv 0\mod{d} $.  Then  $ \pi(\rcr) =\pi_m(\rcr)  $ and 
		\begin{align}
		\label{eq:pi3}
			2^{n-1}\max\left\{ 2nr/d,\left(\rr +4 n^2/d^2 \right)  (1-\btndr) \right \}& \\  \nonumber 		
			 \le \pi(\rcr)    &\le 2^{n-1}\left ( \rr +  4 n^2/d^2  \right).
		\end{align} 		 
		\item If $ dr \ge 2n $, then 
		\begin{align} 
		\label{eq:pi4}	  
			\dfrac{2^n}{d+2} (nr +\rr + 4\nd^2) (1-\alfndr) \le & \pi(\rcr) \le \\ 
			\nonumber
			   \pi_m(\rcr) \le & 2^{n-1}  \max  \left\{ {4nr}/{d} + 2r, \rr +  4\nd^2 \right\}.  
		\end{align} 
	\end{enumerate}
\end{thm}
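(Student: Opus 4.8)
The plan is to run a single edge-load computation through every part, distinguishing only whether $\rcr$ is $G$-orbit proportional. For any $G$-invariant shortest-path routing of the form \eqref{eq:R} assembled from chosen paths $P_{\ax}$ from $\oo$ to $\ax$, the fact that $E_0,E_1,\dots,E_d$ are exactly the $G$-orbits on edges (Lemma \ref{lem:edgeOrb}) makes the load constant on each $E_i$: since $|E(\by P_{\ax})\cap E_i|=|E(P_{\ax})\cap E_i|$ (left translation preserves the orbit $E_i$), the total load on $E_i$ is $|G|\,C_i$ with $C_i:=\sum_{\ax\in G}|E(P_{\ax})\cap E_i|$, spread evenly over $|E_i|$ edges. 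With $|E_0|=2^n r$ and $|E_k|=2^{n-1}r$, the ring-orbit load is $C_0=\sum_{\ax\in G} l\ax$ (each shortest path uses exactly $l\ax$ ring edges) and each cube-orbit load is $2C_k$.

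For parts (a) and (b), the hypotheses $dr=n$ (so $d\mid n$) or $n\equiv 0\bmod d$ make $\rcr$ $G$-orbit proportional by Lemma \ref{lem:orbPrp}, so \eqref{eq:fP} gives $\pi=\pi_m=\max_i(\text{load on }E_i)$. Here every direction $\e_j$ lands in the unique orbit $E_k$ with $k\equiv j\bmod d$, and a short count yields $C_k=2^{n-1}r(n/d)$, i.e. cube-orbit load $2^n r(n/d)$, which is $2^n r^2$ when $dr=n$ and $2^{n-1}(2nr/d)$ in general. The ring-orbit load $\sum_{\ax\in G} l\ax$ I would bound using Theorem \ref{thm:td1} (case $dr=n$) and \eqref{eq:bnds4lax2} (case $dr\ge 2n$). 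Part (a) then follows by comparing $2^n r^2=2^{n-2}\cdot 4r^2$ with these bounds: since $5r^2-8r+8\le 4r^2$ exactly when $r\le 4+2\sqrt{2}$, the cube orbit dominates for $3\le r\le 6$ (giving the exact value $2^n r^2$) while the ring orbit governs the upper bound for larger $r$, and the lower bounds of Theorem \ref{thm:td1} give the stated estimates for $r\ge 2^9$. For part (b) I would verify the elementary inequality $2nr/d\le \rr+4n^2/d^2$ (a quadratic in $r$ with negative discriminant, after the floor correction), so the ring orbit again controls the upper bound, producing \eqref{eq:pi3}.

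For part (c) the lower bound is immediate from the vertex-transitive bound \eqref{eq:trivial} and the lower bound for $\td(\rcr)$ in Theorem \ref{thm:td2}, using $|G|/|E|=2/(d+2)$. For the upper bound on $\pi_m$ (and $\pi\le\pi_m$ by definition) I would take the explicit routing whose paths are the optimal sequences $\hat{x}^t$ supplied in the remarks after Theorems \ref{thm:Pax1} and \ref{thm:Pax2}; crucially these cross each direction $\e_j$ at either its ``$y$-position'' $\lfloor(j-1)/d\rfloor$ or its ``$z$-position'' $\lfloor(j+dr-n-1)/d\rfloor$, placing the edge in one of two fixed orbits $Y_j,Z_j$. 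The ring load is $\sum_{\ax\in G} l\ax\le 2^{n-1}(\rr+4\nd^2)$ by \eqref{eq:bnds4lax2}. Writing $C_k=\sum_{j:\,Y_j=E_k}\alpha_j+\sum_{j:\,Z_j=E_k}\beta_j$, where $\alpha_j,\beta_j$ count the vertices routing direction $j$ through its $y$- or $z$-position and $\alpha_j+\beta_j=2^{n-1}r$, I would bound $\alpha_j,\beta_j\le 2^{n-1}r$ to get $C_k\le 2^{n-1}r\big(\#\{j:Y_j=E_k\}+\#\{j:Z_j=E_k\}\big)$, hence cube-orbit load $\le 2^{n-1}(4nr/d+2r)$ once the count below is in place.

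The main obstacle is exactly this final count. The orbit index of $Y_j$ is $((j-1)\bmod d)+1$, and a computation from \eqref{eq:yz} shows the index of $Z_j$ is that of $Y_j$ shifted by $\nd d-n\equiv-(n\bmod d)\pmod d$. Consequently $\#\{j:Y_j=E_k\}$ and $\#\{j:Z_j=E_k\}$ each equal $\nd$ or $\nd-1$, and their sum reaches $2\nd$ only for residues $k$ in the overlap of the two ``large'' residue blocks. Showing that this overlap is empty unless $n\bmod d\ge d/2$, and that when it is nonempty the $4nr/d$ term already absorbs the surplus, is the delicate residue bookkeeping that forces the error term to be $+2r$ rather than $+4r$; this is where I expect to spend most of the effort.
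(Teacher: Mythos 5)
Your proposal is correct, and for parts (a), (b) and the lower bound in (c) it is essentially the paper's own argument: trivial stabilisers give an integral $G$-invariant shortest-path flow, orbit-proportionality (Lemma \ref{lem:orbPrp}) plus \eqref{eq:fP} reduce $\pi=\pi_m$ to the maximum of the ring-orbit load $\sum_{\ax}l\ax$ and the cube-orbit load $2^{n}rn/d$, and the bounds on $\sum_{\ax}l\ax$ from Theorem \ref{thm:td1} and \eqref{eq:bnds4lax2} do the rest; the comparison $5r^2-8r+8\le 4r^2$ for $r\le 6$ and the quadratic inequality $4nr/d\le r^2+8n^2/d^2$ are exactly the checks the paper leaves implicit. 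The only real divergence is the cube-edge count in the upper bound of (c). You aggregate per $G$-orbit, writing $C_k$ as a sum over directions $j$ split between the two orbits $Y_j,Z_j$, and you then need the residue analysis showing $\#\{j:Y_j=E_k\}+\#\{j:Z_j=E_k\}\le 2n/d+1$ (your observation that the two blocks of ``large'' residues overlap only when $n\bmod d\ge d/2$, in which case $2\nd\le 2n/d+1$ anyway, does close this). The paper instead fixes a single column of edges $\{\{(\a,x),(\a+\e_i,x)\}:\a\in\ZZ_2^n\}$ and counts ordered pairs $(\by,\cz)$ whose path can cross it: the optimal sequences force $x-y$ into a window of $2\nd-1$ ring positions and force $b_i\ne c_i$, giving at most $2^{2n-1}(2\nd-1)r$ paths spread uniformly over $2^{n-1}$ edges, and then $2\nd-1\le 2n/d+1$ follows in one line from $\nd\le n/d+1$. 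So the ``delicate residue bookkeeping'' you flag as the main obstacle is avoidable: the per-column count gives the $+2r$ error term with no case distinction on $n\bmod d$.
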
	 
 
\begin{proof}		
Let $ P_{\ax} $ be a shortest path in $\rcr$ from $ \oo $ to $ \ax $. Since $G$ is regular on $G$ in its left-regular multiplication, we have $G_{u} = \{\oo\}$ and so $G_{uv} = \{\oo\}$ for any two distinct vertices $u $ and $ v $ of $ \rcr$. Thus, by Lemma \ref{lem:intflow}, there is exactly one $uv$-path $P_{uv}$ such that $f^*(P_{uv})=1$ and $P_{uv}$ is a shortest path. 
	
If $ n\equiv 0\mod{d} $, then by Lemma \ref{lem:orbPrp}, $\pi(\rcr)$ is given by (\ref{eq:fP}). Since  $ \sum_{(u,v) \in G \times G} |E(P_{uv})\cap E_i| = 2^nr\sum_{\ax \in G } |E(P_{\ax}) \cap E_i| $, $ 0\le i\le d $, we have 
\begin{align}
\nonumber
	\pi(\rcr)\! =\!\pi_m(\rcr)\! &= \max_{0 \le i \le d} \frac{ 2^nr \sum_{\ax \in G}|E(P_{\ax})\cap E_i|}{|E_i|} \\
	\nonumber
	&= \max\left\{ \!\!\sum_{\ax \in G}\!|E(P_{\ax})\!\cap\! E_0|, \max_{ 1 \le i \le d } 2 \!\!\sum_{\ax \in G}\!|E(P_{\ax})\!\cap\! E_i| \right\}\\
		\label{eq:pif}
	& = \max\left\{ \sum_{\ax \in G}l\ax , \max_{ 1 \le i \le d } 2r \sum_{\a \in \ZZ_2^n}|E(P_{(\a,0)}) \cap  E_i|\right\}.
\end{align}  

(a)~Suppose $dr = n$. For $ 1 \le i \le d $ and $ 0 \le y \le r-1 $, $ P_{\ax} $ contains exactly one edge in $ E_i $  if and only if $ a_{d y+i} =1 $. Hence $ \sum_{\a \in \ZZ_2^n}| E(P_{(\a,0)}) \cap  E_i| = 2^{n-r} \sum_{y=0}^r  y \binom{r}{y} = 2^{n-1}r $. 
This together with (\ref{eq:Slax1UB}),  (\ref{eq:Slax1R}), \eqref{eq:Slax1b} and (\ref{eq:pif}) yields the result.  	

(b)~Suppose $ dr \ge 2n $ and $ n\equiv 0\mod{d} $. For $ 1 \le i \le d $,  $P_{\ax} $ has exactly one edge in $ E_i$ if and only if $ a_{dy+i} =1 $ for some $ 0\le y \le r-1 $. Since by Lemma \ref{lem:rpit} the number of distinct values of $ dy+i $ is $ n/d $, we have $ \sum_{\a \in \ZZ_2^n}| E(P_{(\a,0)}) \cap  E_i | = 2^{n-n/d} \sum_{y=0}^{n/d}  y \binom{n/d}{y} = 2^{n-1}n/d$.  Therefore, by \eqref{eq:bnds4lax2} and (\ref{eq:pif}) we get  \eqref{eq:pi3} immediately.  
	
(c)~Suppose $ n \not\equiv 0 \mod{d} $.  Denote by $ R $ the routing (\ref{eq:R}) based on our chosen shortest paths $P_{\ax}$. Let  $ 0\le x<r $ and $ i\in D(x) $ be fixed. For any pair of vertices $ (\by,\cz) $, the path $ P\in R $ from $ \by $ to $ \cz $ passes through $ e=\{ (\a,x ),(\a+\e_i,x)\} $ for some $ \a \in \ZZ_2^n $  provided that its corresponding optimal sequence (obtained from \eqref{eq:xHt})  contains $ x $.  By the  construction of  the optimal sequence for $ P $,  we have $ r-\nd+1 \le x-y \le \nd -1 $ and $ b_i\ne c_i $. Therefore, there are at most $ 2^{2n-1 }(2\nd - 1) r  \le 2^{2n}nr/d + 2^{2n-1 }r $ paths in $ R $ containing $ e $.  
On the other hand, for any path $ P: \by,\ldots,\ax,(\a+\e_i,x),\ldots,\cz $ in $ R $ that passes through $ e $ and any $ \a'\in\ZZ_2^n $, the path $gP: g\by,\ldots,(\a',x),(\a'+\e_i,x),\ldots,g\cz $ in $R$, where $ g = (\a'-\a,0) $, passes through the edge $ e'= \{(\a',x ),(\a'+\e_i,x)\} $. Therefore, the  paths in $ R $ uniformly load the edges of $\{ \{(\a,x),(\a+\e_i,x)\}: \a\in \ZZ_2^n\}$. Thus, the load on each cube edge $ \{ (\a,x ),(\a+\e_i,x)\} $ under $R$ is at most $ 2^{n+1}nr/d + 2^{n}r $.  
	
On the other hand, the load on ring edges of $ \rcr $ under $ R $ is uniform since $ R $ is $ G $-invariant and the set of ring edges forms a $ G $-edge orbit. Similar to (b), the load of $ R $ on ring edges is given by \eqref{eq:bnds4lax2}. This together with the upper bound for the load on the cube edges under $ R $ gives the upper bound in \eqref{eq:pi4} for $ \pi(\rcr) $ and $ \pi_m(\rcr) $. The lower bound in \eqref{eq:pi4} follows from \eqref{eq:trivial} and Theorem \ref{thm:td2}.  
\end{proof}

\begin{rem*}
\em{
(a) By Theorem \ref{thm:pi}, when $ r $ is large, we have 
\[
	\pi(\rcr) \approx 5\cdot 2^{n-2} r^2,\;\; \mbox{if $ dr=n $}
\]
\[
\pi(\rcr) \approx  2^{n-1}\left (\rr +  4 n^2/d^2 \right), \;\; \mbox{if $ dr\ge 2n $ and $ n\equiv 0\mod{d} $}.
\]	
 
(b) The edge-forwarding index of any graph is bounded from below \cite{heydemann1989} by the sum of the distances between all order pairs of vertices divided by the number of edges. Obviously, a graph with edge-forwarding index close to this bound has relatively small bottleneck-congestion on edges as far as routings are concerned. In the case of $\rcr$, this trivial lower bound gives $\pi(\rcr) \ge 2\td(\rcr)/(d+2)$. Using Theorems \ref{thm:td1}, \ref{thm:td2} and \ref{thm:pi}, one can verify that the ratio of $ \pi(\rcr)$ to this trivial lower bound is no more than (i) $ (5d+10)/(4d+2)$ if $ dr=n $ and $7 \le r < 2^{9} $, (ii) $ 1.82 (5d+10)/ (4d+10) \le 1.95$ if $ dr=n $ and $ r\ge 2^{9} $, (iii) $1/(1-\alpha_{n,d,r})$ if $ n\equiv0\mod{d} $ and $ dr =kn $ for some integer $ k\ge 3$, and (iv) $ \max\{2(d+2)(2n+d)/d(2n+r), 1 + d (k^2+8)/((2 dk+  k^2+8 )(1-\alfndr))\} < 6 $ if $ dr=kn $ for some integer $ k\ge2 $. In the first three cases $\rcr$ has relatively small edge-forwarding index. 
	}
\end{rem*}

 By Lemma \ref{lem:ccc}, we obtain the following corollary of Theorem \ref{thm:pi}. In particular, we recover the formula  $ \pi(CC_n)=5n^22^{n-2}(1-o(1)) $ (\hspace{1sp}\cite[Theorem 3]{shah2001}). In fact, we give more accurate lower and upper bounds for $ \pi(CC_n) $ and $ \pi_m(CC_n) $. We observe that the $d$-ply cube-connected cycles $Q_{kd}(d,kd)$ with $ k, d \ge 2 $ have smaller edge-forwarding indices than the usual cube-connected cycles.  

\begin{cor}
	\begin{enumerate}[\rm (a)]
		\item $  2^{n-2}n^2\max\left \{ 4,\; 5 (1 - 4 \;(\log^2 n)/n )\right \} \le  \pi(CC_n) = \pi_m(CC_n)  \le   5\cdot 2^{n-2}n^2(1-(8n-8)/(5n^2))$ if $ n\ge2^{9}$;  $2^{n} n^2 \le \pi(CC_n) = \pi_m(CC_n) \le 5\cdot 2^{n-2} n^2 (1-(8n-8)/(5n^2))$ if $ 7\le n < 2^{9}$; and $ \pi(CC_n) = \pi_m(CC_n) = 2^{n} n^2 $ if $ 3\le n\le 6 $. 
		\item $ 2^{dr-2} r^2\max\left \{4,5(1 - 4 (\log^2 r)/r )\right \} \le \pi(COR(d,r)) = \pi_m(COR(d,r)) \le   2^{dr-2} (5r^2-8(r-1)) $ if $ r\ge2^{9}$;  $2^{dr } r^2 \le \pi(COR(d,r))= \pi_m(COR(d,r)) \le  2^{dr-2} (5r^2-8(r-1)) $ if $ 7 \le r < 2^{9}$; and $ \pi(COR(d,r))= \pi_m(COR(d,r))= 2^{dr}  r^2  $ if $ 3\le r\le 6 $.
		\item 	$ 2^{kd-1}\max\left\{ 2k^2d ,\left(\lfloor k^2d^2/2\rfloor +4 k^2 \right) (1-\beta_{kd,d,kd}) \right \}  \le \pi(Q_{kd}(d,kd))  =\pi_m(Q_{kd}(d,kd))   \le 2^{kd-1}  \max\left \{   2k^2d ,  \lfloor k^2d^2/2\rfloor +  4 k^2  \right \} $ for any integers $ d\ge 2 $ and $ k\ge 1 $ with $ kd\ge 3 $. 
	\end{enumerate}
\end{cor}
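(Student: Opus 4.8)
The plan is to obtain each part of the corollary by specialising Theorem \ref{thm:pi} to the three families identified in Lemma \ref{lem:ccc}, namely $CC_n\cong Q_n(1,n)$, $COR(d,r)\cong Q_{dr}(d,r)$, and the $d$-ply cube-connected cycles $Q_{kd}(d,kd)$. No new idea is required: the entire argument is a substitution of parameters into the bounds of Theorem \ref{thm:pi}, followed by routine algebraic rewriting, together with a check of which case of that theorem governs each family.

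First, for part (a) I would take the parameter triple $(n,d,r)$ of Theorem \ref{thm:pi} to be $(n,1,n)$. This satisfies $dr=n$, so Theorem \ref{thm:pi}(a) applies and in particular gives $\pi(CC_n)=\pi_m(CC_n)$. Setting the theorem's $r$ equal to $n$ and keeping the leading factor $2^{n-2}$, the three regimes $3\le r\le 6$, $7\le r<2^{9}$ and $r\ge 2^{9}$ pass verbatim to the three regimes for $CC_n$. The only verifications are the arithmetic identities $2^{n-2}(5r^2-8r+8)=5\cdot 2^{n-2}n^2\bigl(1-(8n-8)/(5n^2)\bigr)$ at $r=n$ and the obvious expansion of $2^{n-2}r^2\max\{4,5(1-4\log^2 r/r)\}$ at $r=n$, both immediate. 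For part (b) I would instead take $(n,d,r)=(dr,d,r)$, which again meets the hypothesis $dr=n$ of Theorem \ref{thm:pi}(a); now the leading factor becomes $2^{dr-2}$, the theorem's $r$ is the given $r$, and the identity $5r^2-8(r-1)=5r^2-8r+8$ reconciles the two displayed forms of the upper bound.

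For part (c) I would first confirm the hypotheses of Theorem \ref{thm:pi}(b): with $(n,d,r)=(kd,d,kd)$ one has $dr=kd^2$ and $n=kd$, so $dr/n=d\ge 2$ gives $dr\ge 2n$, while $n=kd\equiv 0\pmod{d}$; hence Theorem \ref{thm:pi}(b) applies and yields $\pi=\pi_m$. Substituting the values $2nr/d=2k^2d$, $\lfloor r^2/2\rfloor=\lfloor k^2d^2/2\rfloor$, $4n^2/d^2=4k^2$, $2^{n-1}=2^{kd-1}$ and $\beta_{n,d,r}=\beta_{kd,d,kd}$ turns \eqref{eq:pi3} into exactly the displayed bounds, the upper bound being relaxed to a maximum (which only weakens it and so still holds). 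The only place likely to produce a slip is keeping the roles of the theorem's $r$ and the corollary's $r$ or $n$ straight across the three substitutions, and confirming that the boundary cases (such as $k=1$ in part (c) and the thresholds $r=6,7,2^{9}$) are covered by the stated ranges $d\ge 2$, $k\ge 1$, $kd\ge 3$. This bookkeeping is the whole of the work; there is no genuine obstacle.
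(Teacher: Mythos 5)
Your proposal is correct and coincides with the paper's own (implicit) proof: the corollary is obtained exactly by specialising Theorem \ref{thm:pi}(a) to $Q_n(1,n)$ and $Q_{dr}(d,r)$ and Theorem \ref{thm:pi}(b) to $Q_{kd}(d,kd)$ via Lemma \ref{lem:ccc}, with the same parameter substitutions and the same routine identities (e.g.\ $2^{n-2}(5n^2-8n+8)=5\cdot 2^{n-2}n^2(1-(8n-8)/(5n^2))$) that you carry out.
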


\section{Bisection width}
\label{sec:bw}

Given a graph $X$ and a subset $U$ of  $V(X)$, let $\delta(U,\overline{U})$ be the subset of $E(X)$ consisting of those edges with one end-vertex in $U$ and the other in $\overline{U}:= V(X) \setminus U$.  A \emph{bisection} of $X$ is a partition  $\{U  ,\overline{U} \}$ of $V(X)$ such that  $|U|$ and $|\overline{U}|$ differ by at most one. The \emph{bisection width} of  $X$, denoted by $\bsw(X)$, is the minimum of $|\delta(U,\overline{U})|$ over all bisections $\{U  ,\overline{U} \}$ of $X$. It is known \cite{mans2004} that the decision problem for $\bsw(X)$ is NP-complete for general graphs $X$. 

Let $R^*$ be a routing in $X$ such that $\pi(X)=\pi(X, R^*)$ and $\{U, \overline{U} \}$ a partition of $V(X)$. Then the total load  on the edges of $\delta(U,\overline{U})$ under $R^*$ is at most $\pi(X)|\delta(U,\overline{U})|$. On the other hand, there are exactly $2 |U| |\overline{U}|$ paths in $R^*$ with one end-vertex in $U$ and the other in $\overline{U}$. Therefore, 
\[
\pi(X)|\delta(U,\overline{U})| \geq 2 |U| |\overline{U}|.
\]
In particular, for a bisection $\{U  ,\overline{U} \}$ of $X$ with $ \delta(U,\overline{U})=\bsw(X) $, this yields  \cite{mohar1997}
\begin{equation}
 \label{eq:lbew}
	\bsw(X) \ge \frac{ 2\lfloor |V(X)| /2\rfloor \lceil  |V(X)| /2 \rceil }{\pi(X) } = \frac{\lfloor |V(X)|^2 / 2 \rfloor }{\pi(X)}.
\end{equation}

It was claimed in \cite[Corollary 1]{sun2000} and \cite[Theorem 2]{hu2005} (with a difference for case $ r=1 $) that the bisection width of $Q_n^-(d,r)$ is equal to $ 2^{n-1} dr/n $  and shown in \cite[Theorem 12]{xie2013} that $\bsw(Q_n^-(d,r))\le 2^{n-1} dr/n$. However, it has been unknown whether the upper bound on $ \bsw(Q_n^-(d,r)) $ in \cite{xie2013} is sharp or not.  
Using Theorem \ref{thm:pi}, we give sharp bounds on $\bsw(\rcr)$ in the following theorem. Our result shows in particular that in some cases $\bsw(Q_n^-(d,r)) < 2^{n-1} dr/n$ and so the related results in \cite{sun2000, hu2005} are incorrect and the upper bound on $ \bsw(Q_n^-(d,r)) $ in \cite{xie2013} is not sharp in general.  
 
\begin{thm}\label{thm:ew}
	\begin{enumerate}[\rm (a)]
		\item  If $dr =n $, then
		\begin{enumerate}[\rm (i)]
		 \item  $\bsw(\rcr) = 2^{n-1} $ when $ 3\le r\le 6$;
		 \item  $2^{n+1}/(5-(8(r-1)/r^2)) \;\le  \bsw(\rcr) \le 2^{n-1} $ when $r\ge 7$.
		\end{enumerate}
		\item  If $ dr\ge 2n $, then
		 \begin{enumerate}[\rm (i)]
		 	\item  $  2^{n+1} r^2/(r^2  +  8n^2/d^2)  \le \bsw(\rcr) \le 2^{n} \min \left\{dr/2n, 2 \right\}$ when either $r$ is even and $ n=kd $ for some $k\ge 2$; 
		 	\item $ 2^{n+1} r^2/( r^2 + 8 ) \le \bsw(\rcr) \le 2^{n-1} \min \left\{ r,5 \right\}$ when $r$ is odd and $n = d $;
		 	\item $ 2^{n} \min\left\{dr/(4n+2d), 2r^2/(r^2\! +\! (8\nd^2)) \right\} \le  \bsw(\rcr) \le 2^{n}\! \min \left\{dr/2n, 2 \right\} $ when $ n\not\equiv0\mod{d} $.
	 	\end{enumerate}
	\end{enumerate}
\end{thm}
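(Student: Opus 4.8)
The plan is to combine the general lower bound \eqref{eq:lbew} with the upper bounds on $\pi(\rcr)$ from Theorem \ref{thm:pi} for the lower bounds, and to exhibit explicit balanced bisections for the upper bounds. Throughout one has $|V(\rcr)| = 2^n r$, so $\lfloor |V(\rcr)|^2/2\rfloor = 2^{2n-1}r^2$ and \eqref{eq:lbew} reads $\bsw(\rcr) \ge 2^{2n-1}r^2/\pi(\rcr)$.

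For the lower bounds I would substitute the appropriate upper bound on $\pi(\rcr)$ (noting $\pi(\rcr) \le \pi_m(\rcr)$) into this inequality. For $dr = n$ I use Theorem \ref{thm:pi}(a): in the range $3 \le r \le 6$ the exact value $\pi(\rcr) = 2^n r^2$ yields $\bsw(\rcr) \ge 2^{n-1}$, while for $r \ge 7$ the bound $\pi(\rcr) \le 2^{n-2}(5r^2 - 8r + 8)$ gives $\bsw(\rcr) \ge 2^{n+1}/(5 - 8(r-1)/r^2)$. For $dr \ge 2n$ I feed the bounds of Theorem \ref{thm:pi}(b),(c) into \eqref{eq:lbew}, simplifying with $\rr \le r^2/2$; rewriting $2^n r^2/\max\{A,B\} = \min\{2^n r^2/A,\, 2^n r^2/B\}$ turns the two alternatives in those bounds into the two-term minima appearing in (b)(i)--(iii). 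The divisibility condition $n \equiv 0 \mod{d}$ decides whether Theorem \ref{thm:pi}(b) or \ref{thm:pi}(c) is used, which is exactly the case split of the statement.

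For the upper bounds I would construct three balanced bisections and keep the best. First, the \emph{constant cut} $A_x = \{\a : a_1 = 0\}$ for every $x$ halves each level, cuts no ring edge, and by Lemma \ref{lem:rpit} cuts $2^{n-1}$ cube edges at each of the $dr/n$ levels with $1 \in D(x)$, totalling $2^{n-1}dr/n = 2^n\cdot dr/(2n)$; this recovers, and via Lemma \ref{lem:isom} re-derives, the bound of \cite{xie2013}, and specializes to $2^{n-1}$ when $dr = n$. Second, for even $r$ the \emph{arc cut} taking levels $0 \le x \le r/2-1$ is balanced, cuts no cube edge, and cuts $2^n$ ring edges at each of its two level-boundaries, giving $2^{n+1} = 2^n\cdot 2$. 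Third, for odd $r$ no clean arc is balanced, so I use a \emph{hybrid}: $(r-1)/2$ full levels, $(r-1)/2$ empty levels, and one half-level $A_{x^*} = \{\a : a_j = 0\}$ at $x^* = (r-1)/2$, which is balanced; the two ring-boundaries adjacent to $x^*$ cost $2^{n-1}$ each and the single wrap-around boundary costs $2^n$, while the half-level contributes a cube cut only if the splitting direction $\e_j$ is active at $x^*$.

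The crux — and the source of the dichotomy between the ``$5$'' in (b)(ii) and the ``$2$'' in (b)(i),(iii) — is the choice of splitting coordinate $j$ in the odd-$r$ hybrid. If $d < n$ then $|D(x^*)| = d < n$, so I may pick $j \notin D(x^*)$; then no cube edge at level $x^*$ flips coordinate $j$, the half-level contributes no cube cut, and the total is $2^{n-1} + 2^{n-1} + 2^n = 2^{n+1} = 2^n\cdot 2$. This is precisely case (b)(iii), where $n \not\equiv 0 \mod{d}$ forces $d < n$. If instead $d = n$ (the case $n = d$ of (b)(ii)) then $D(x^*) = \{1, \dots, n\}$, every coordinate is active, the half-level unavoidably adds $2^{n-1}$ cube edges, and the total rises to $5\cdot 2^{n-1}$, explaining the ``$5$''. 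Taking the minimum of the constant cut and the relevant arc or hybrid cut then yields each stated upper bound. The main obstacle is this odd-$r$ bookkeeping: checking balance of the hybrid partition, correctly tallying which ring and cube edges cross each boundary, and — the genuinely delicate point — invoking the structure of $D(x^*)$ from Lemma \ref{lem:rpit} to select a splitting direction that escapes all active cube directions whenever $d < n$.
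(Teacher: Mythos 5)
Your proposal is correct and follows essentially the same route as the paper: lower bounds by plugging the upper bounds on $\pi(\rcr)$ from Theorem \ref{thm:pi} into \eqref{eq:lbew}, and upper bounds from the same three bisections (a coordinate cut giving $2^{n-1}dr/n$ via Lemma \ref{lem:rpit}, an arc cut giving $2^{n+1}$ for even $r$, and a half-level hybrid for odd $r$ whose extra $2^{n-1}$ cube-edge cost arises exactly when $d=n$). The only cosmetic differences are your choice of splitting coordinate and of which level carries the half-cut; the paper fixes the half-level at $x=0$ split by $a_n$, so that $n\in D(0)$ iff $n=d$, which is the same dichotomy you obtain by choosing $j\notin D(x^*)$ when $d<n$.
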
 

\begin{proof}
	The lower bound in each case is obtained from (\ref{eq:lbew}) and the corresponding upper bound for $ \pi(\rcr) $ in Theorem \ref{thm:pi}. Hence we have 
	\begin{equation}
	\label{eq:rcrbswLb}
	\bsw(\rcr) \!\ge\! \left\{\!\!
		\begin{array}{l l}
		2^{n-1}, &\text{ if } dr =n \text{ and  }  3\le r\le 6, \\[.2cm]  
		2^{n+1} /(5-(8(r-1)/r^2)), &\text{ if } dr =n \text{ and  }  r\ge 7, \\[.2cm] 
		2^{n+1} r^2/(r^2\! +\!8n^2/d^2) , &\text{ if } dr \!\ge\! 2n \text{ and  }  n \equiv 0 \!\!\!\mod{d}, \\[.2cm] 
		2^{n} \min\left\{dr/(4n+2d), 2r^2\!/\!(r^2\! +\!\! 8\nd^2) \right\}\!,\! &\text{ if } dr\! \ge \! 2n  .
		\end{array} \right .  
	\end{equation}
		
	It remains to prove the upper bounds for $ \bsw(\rcr) $. Let $U$ be the set of vertices $ \ax $ of $ \rcr $ with $\a=(a_{1},a_2 ,\dots, a_{n-1},1)$. Then $\overline{U}$  is the set of vertices $ \ax $ such that $\a=(a_1,a_2,\ldots,a_{n-1} ,0 )$ and $\{ U , \overline{U}\}$ is a bisection of $ \rcr $. There is an edge between $\ax \in U$ and  $(\a +\e_n , x) \in \overline{U}$ if and only if $ n\in D(x) $. Thus,   
		$$ \delta(U,\overline { U }) =\{  \{(\a, x ) , (\a+\e_n, x ) \} : n\in D(x), 0\le x\le r-1\}.  $$
	 By Lemma \ref{lem:rpit}, there are $ d r/n$ distinct elements $ x $ such that  $n\in D(x) $.  Hence   $|\delta( U, \overline{U})| =  2^{n-1} d r  / n$  and so 
	\begin{equation}
	\label{eq:ubbw}
		  \bsw(\rcr) \le  2^{n-1} d r  / n.
	\end{equation}
	
	We now use other bisections to obtain better upper bounds in some cases.  
	
	\medskip 
	\textsf{Case 1:}~$ r $ even. Let $U= \{ \ax : \a \in \ZZ_2^n, 0\le x \le  r/2  - 1 \}$. Then $\{ U , \overline{U}\}$ is a bisection of $\rcr$ and  
		$$\delta(U,\overline { U }) = \left\{  \{(\a, x  ),   (\a , x+1 )\} :  \a \in \ZZ^n_2 , x = r/2 -1,r-1 \right\}.$$
	Hence $ \bsw(\rcr) \le   |\delta( U, \overline{U})| =  2^{n + 1}$. 
	
	\medskip 
	\textsf{Case 2:}~$ r $ odd. Let $U= \{\ax: \a \in \ZZ_2^n , 1 \le x \le (r-1)/2 \} \cup \{(\a,0): \a \in \ZZ_2^n, a_{n}=0  \}$. Then    $\overline{U}= \{\ax: \a \in \ZZ_2^n , (r+1)/2 \le x \le r-1  \} \cup \{(\a,0): \a \in \ZZ_2^n, a_{n}=1  \}$ and $\{ U , \overline{U}\}$ is a bisection of $\rcr$.  Two vertices $ ((a_1, \ldots, a_{n-1} ,1),0) $ and $((b_1,\ldots, b_{n-1},0),0) $ are adjacent if and only if  $ n\in D(0) $. Hence, if $n > d$, then these two vertices are not adjacent since  $n\not\in D(0)$. Thus, if $n > d$, then 
	\begin{align}
	\nonumber
		\delta(U,\overline { U }) = \{ & \{(\a, (r-1)/2  ), (\a ,(r+1)/2 )\} : \a \in \ZZ^n_2 \} \cup \\		\nonumber
		& \{ \{(\a, 0  ),   (\a ,1 )\} :   \a \in \ZZ^n_2,a_n=1 \} \cup \{ \{(\a, 0 ), (\a ,r-1 )\} : \a \in \ZZ^n_2,a_n=0 \}  .
	 	\end{align}
	Therefore, $|\delta( U, \overline{U})| =  2^{n + 1}$. If $n = d$, then $((a_1,\ldots, a_{n-1},1),0) $ and  $((b_1,\ldots, b_{n-1},0),0) $  are adjacent and so $\bsw(\rcr) \le |\delta( U, \overline{U})| = 2^{n+1}+2^{n-1}$.  
	
	Combining  \eqref{eq:ubbw} and the upper bounds for $ \bsw(\rcr) $ in Cases 1 and 2, we obtain 
	\[
	 \bsw(\rcr) \le \left\{
	 \begin{array}{l l}
		 \min\{ 2^{n-1}dr/n, 2^{n + 1} \} , &\text{ if } r \text{ even or } n>d , \\[.2cm]  
	  	\min\{ 2^{n-1}dr/n, 5\cdot 2^{n-1}\} ,  &\text{ if } r \text{ odd and }   n=d. \\ 
	 \end{array} \right . 
	 \]	
This together with \eqref{eq:rcrbswLb} completes the proof. 
\end{proof}

\appendix
 
\section{Proof of Theorem \ref{thm:rcriicay}}
\label{app:prfrcrii}

 In a graph two vertices are \emph{$t$-neighbours} of each other if the distance between them is equal to $t$. Obviously, in a vertex-transitive graph any two vertices have the same number of $t$-neighbours for any integer $t \ge 1$.

  	If $d r  \equiv 0\mod{n}$, then $Q_n^-(d,r) \cong \rcr$ by Lemma \ref{lem:isom}. So $Q_n^-(d,r)$ is a connected Cayley graph by Lemmas \ref{lem:basic} and \ref{lem:isom}. 
 	
 	In the rest of the proof we assume that $Q_n^-(d,r)$ is connected but $d r \not \equiv 0\mod{n}$. We will prove that $Q_n^-(d,r)$ is not vertex-transitive and hence not a Cayley graph. We achieve this by showing that the numbers of $t$-neighbours of $(\bo,r-1)$ and $(\bo, r') $ in $Q_n^-(d,r)$ are distinct for some $t$ to be defined later and 
 	\[
 		r' = \lfloor (r-1)/2 \rfloor.
 	\]
 	Note that $r'\ge 1$ as $r\ge3$.  Set
 	\[
 	D^{-}_x := D(-x)= \{ i-dx \mod{n}: 1\le i \le d\},\;\, x \in \ZZ_r, 
 	\] 
 	with the understanding that the elements of $F^{-}_x$ are between $1$ and $n$. Obviously, all these sets have size $d$. Note that $ (\a,x) $ and $ (\a+\e_j,x) $ are adjacent in $Q_n^-(d,r)$ if and only if $j\in D^{-}_x $.  We partition the set of $t$-neighbours $(\a_t,x_t) $ ($0 \le x_t \le r-1$) of $ (\bo,r-1) $ into $N_{t-2}, N_{t-1}$ and $N_t$, according to whether $\min\{ r-x_t-1,x_t+1\}$ is at most $ t-2 $, exactly $t-1$, and exactly $t$, respectively. Similarly, we partition the set of $t$-neighbours $(\b_t,y_t)$ ($0 \le y_t \le r-1$) of $(\bo,r')$ into $N'_{t-2}, N'_{t-1}$ and $N'_t$ according to whether $|y_t -r'|$ is at most $ t-2$, exactly $t-1$, and exactly $t$, respectively. We will prove that $|N_{t-2}| \le |N'_{t-2}|$, $|N_{t-1}| < |N'_{t-1}|$ and $|N_{t}| = |N'_{t}|$, and therefore $(\bo,r-1)$ and $(\bo, r') $ have different numbers of $t$-neighbours as required. 

 	Since $d r \not \equiv 0\mod{n}$, we can write $dr = an - q$, where $a$ and $q$ are integers with $1\le q \le n-1$ and $a\ge 2$. So  
\begin{equation}
\label{eq:f-1}
	D^{-}_{r-1} = q + D^{-}_{-1} := \{ q + d +i  \mod{n}: 1\le i \le d\}.
\end{equation}
Note that  the sets $ D^{-}_{i} $ and $ D^{-}_{i+1} $ have consecutive elements (modulo $ n $) and $|D^{-}_{-1} \cap D^{-}_{i} | =  | D^{-}_{r'} \cap D^{-}_{r'+i+1}|  $, for every $ 0 \le i < r' $. 
 	
\vspace{.05in}
\textsf{Claim 1:}~There exists $k$ with  $0 \le k \le r'- 1$ such that $ |D^{-}_{r' } \cup D^{-}_{r'+k+1}| \ne |D^{-}_{r-1} \cup D^{-}_{k} |$.
\vspace{-.1in}
	\begin{proof} 
 		First, we show that $D^{-}_{r-1} \cap D^{-}_{k}\ne \emptyset$ for some  $0 \le k <r'$. In fact, if $ q > n-2d$, then $D^{-}_{ r-1} \cap  D^{-}_0  \ne \emptyset $ since $n-d < q+d  < n+d $ implies $ q+d+i \mod{n} \in D^{-}_0  $ for some $ 1\le i \le d $.   
 		Assume $1 \le q \le n-2d$. Then $dr = an - q \ge an - (n-2d) = (a-1)n + 2d \ge (a-1)(2d+1)+2d = 2ad + (a-1)$, that is, $r \ge 2a + (a-1)/d$. Since $a \ge 2$, it follows that $r \ge 5$ and $r'\ge 2$. Let $\epsilon = r-1-2r'$. Then $d r' = (dr - (\epsilon+1)d)/2 $, and $\epsilon = 0, 1$ depending on whether $r$ is odd or even. Note that  $\cup_{x=0}^{r'-1} D^{-}_x =\{ 1,\dots, d \} \cup \{ n, n-1,\dots, 1-d (r'-1) \mod{n} \} $. Thus, if $D^{-}_{ r-1} \cap (\cup_{x=0}^{r'-1} D^{-}_x ) = \emptyset$, then the elements of $ D^{-}_{ r-1}$  are between $d$ and $1 - d(r'-1) \mod{n} $, that is, $ d < q+d +i < n + 1 - d(r'-1) = (2-a)n/2 + d + 1 + (q+(1+\epsilon)d)/2 $  for $1\le i \le d$. In particular, when $i=d$, this inequality yields $ (q+d)/2 < (2-a)n/2 + \epsilon d/2 +1$. However, this is impossible, because when $r$ is even and $a=2$, we have $q = an - dr$ is even and so $q\ge 2$, and in the remaining case we have $\epsilon = 0$ or $a > 2$. This contradiction shows that there exists some $0 \le k \le r'-1$ such that $D^{-}_{r-1}  \cap D^{-}_{k} \ne \emptyset$. 
 		
 		Now we show that for this particular $k$ we have $|D^{-}_{-1 } \cap D^{-}_{k}| \ne |D^{-}_{r-1} \cap D^{-}_{k} |$ or $|D^{-}_{-1 } \cap D^{-}_{k-1}| \ne |D^{-}_{r-1} \cap D^{-}_{k-1} |$. If $k = 0$, then $|D^{-}_{-1 } \cap D^{-}_{0}| \ne |D^{-}_{r-1} \cap D^{-}_{0} |$ since $ D^{-}_{-1} \cap D^{-}_{0} = \emptyset $. Assume that $1 \le k \le r'-1$ and $  |D^{-}_{-1} \cap D^{-}_{k} | =  | D^{-}_{r-1} \cap D^{-}_{k}| =s $ for some $s$ with $ 0 < s <d $ (note that $ s<d $ since $  D^{-}_{r-1} \ne D^{-}_{-1} $).  Since $ D^{-}_{k}  \cup D^{-}_{k-1}  = \{ 1-dk,\dots,2d-dk  \mod{n}\} $ and $ 2d \le n $, we have $ | D^{-}_{k} \cup  D^{-}_{k-1}| =2d$. So one of $ D^{-}_{-1} $ and $ D^{-}_{r-1} $ has $ d-s $ elements in $ D^{-}_{k-1} $, and the other has $ d-s $ elements in $ D^{-}_{k+1} $. Since $ D^{-}_{r-1} = q + D^{-}_{-1} $ for $ 1 \le q <n $, it follows that $  |D^{-}_{-1} \cap D^{-}_{k-1} | \ne  | D^{-}_{r-1} \cap D^{-}_{k-1}| $. 
 		Since $ |D^{-}_x|=d $ and $|D^{-}_{x} \cup D^{-}_{x+j}|=|D^{-}_{x' } \cup D^{-}_{x'+j}|  $ for any $x, x', j \in \ZZ_r$, we have $ |D^{-}_{r' } \cup D^{-}_{r'+k+1}|=|D^{-}_{-1 } \cup D^{-}_{k}| \ne |D^{-}_{r-1} \cup D^{-}_{k} |$. 
 	\end{proof}

 Let $ k $ be the smallest integer satisfying the conditions in Claim 1 and set $  t = k+2  $. Then $2\le t \le r' +1$ by Claim 1.  

\vspace{.05in}
\textsf{Claim 2:}~ We have $ d(k+1) <n $ and $	|D^{-}_{r-1} \cup (\cup_{i=1}^{k+1} D^{-}_{i-1})| < | \cup_{i= 0 }^{k+1} D^{-}_{r'+i }|$.   
\vspace{-.1in}
\begin{proof} 
	Suppose $d (k+1) \ge n$. Let $j$ be the largest integer such that $d(j+1) < n$. Since $1 \le j < k$, by Claim 1 and the minimality of $k$, $ | D^{-}_{r-1} \cup  D^{-}_{i-1} | = | D^{-}_{r'} \cup  D^{-}_{r'+i }| = | D^{-}_{-1} \cup  D^{-}_{i-1} |$  for every  $1 \le i \le j+1 $. In particular, $ D^{-}_{r-1} \cup D^{-}_{j} = D^{-}_{-1} \cup D^{-}_{j} $.
 	Clearly, $  D^{-}_{-1} \cap  D^{-}_{i-1} =\emptyset $ for $1 \le i \le j$. Denote $s = |D^{-}_{-1} \cap  D^{-}_{j}|$. Then $0 <s <d  $, $  D^{-}_{r-1} \cap  D^{-}_{i-1} =\emptyset $ for $1 \le i \le j $, and $  |D^{-}_{r-1} \cap  D^{-}_{j}| =s $. 
 	 Note that $ \cup_{i=1}^{j }  D^{-}_{i-1} =  \{ l+1,\dots,l+ j d \mod n\} $ and $  D^{-}_{j} =\{ l-d+1,\dots,l \mod n\} $ for some integer $ l $. Since $ D^{-}_{r-1} \cap D^{-}_{j}  = D^{-}_{-1} \cap D^{-}_{j} = \{l-d+1,\dots,l-d+s\mod n\}$, we have $ D^{-}_{r-1} = D^{-}_{-1} =\{l-2d+s+1,\dots,l-d+s\mod n\}$, which contradicts \eqref{eq:f-1}. Therefore, $ d(k+1) <n $. 
 	
	Consider the case $n < d(k+2)$ first. In this case, $(\cup_{i=1}^{k }  D^{-}_{i-1}) \cap (D^{-}_{-1} \cup D^{-}_{k}) = \emptyset$, and $(\cup_{i=1}^{k }  D^{-}_{i-1}) \cap D^{-}_{r-1} = \emptyset$ by Claim 1 and the choice of $ k $. So $ |D^{-}_{-1} \cap  D^{-}_{k}|= d(k+2)-n \le |D^{-}_{r-1} \cap  D^{-}_{k}|$. Since $ | D^{-}_{r-1} \cup  D^{-}_{k} | \ne  | D^{-}_{-1} \cup  D^{-}_{k} |$, it follows that $ |D^{-}_{-1} \cap  D^{-}_{k}| < |D^{-}_{r-1} \cap  D^{-}_{k} |$, or equivalently $ |D^{-}_{-1} \cup  D^{-}_{k}| >|D^{-}_{r-1} \cup  D^{-}_{k} | $, which implies that Claim 2 is true when $n < d(k+2)$.  	
	Now assume $ d(k+2) \le n $. Then $ D^{-}_{r'+i}  \cap D^{-}_{r'+j} = \emptyset$ for $ 0\le i<j \le k +1$, and so  $  | \cup_{i= 0 }^{k+1} D^{-}_{r'+i }|  = d(k+2) $ and $|D^{-}_{r'} \cup D^{-}_{r' + k + 1}|= 2d$. If $|D^{-}_{r-1} \cup (\cup_{i=1}^{k+1} D^{-}_{i-1})| =  d(k+2)$, then $D^{-}_0 , \ldots,D^{-}_k, D^{-}_{r-1}$ are pairwise disjoint and so $|D^{-}_{r'} \cup D^{-}_{r' + k + 1}| = |D^{-}_{r-1} \cup D^{-}_{k}|$, which contradicts Claim 1. Therefore, $|D^{-}_{r-1} \cup (\cup_{i=1}^{k+1} D^{-}_{i-1})| < d(k+2) = | \cup_{i= 0 }^{k+1} D^{-}_{r'+i }| $.   
\end{proof}

\textsf{Claim 3:}~$|N_{t-2}| \le  |N'_{t-2}|$.
\vspace{-.1in}
\begin{proof}	  	
	We prove this by showing the existence of an injective mapping from $ N_{t-2} $ to $ N'_{t-2} $. In fact, for any $(\a_t ,x_t) \in N_{t-2}$, we choose a shortest path $P(\a_t,x_t): (\bo,r-1), (\a_1,x_1), \dots,  (\a_t,x_t)$ such that $(\a_{j+1},x_{j+1}) =(\a_j ,x_j \pm 1 )$ or $(\a_{j+1},x_{j+1}) =(\a_j+\e_{i_j} ,x_j)$ and $i_j \in D^{-}_{x_j}$, $0\le j < t $. (Hence the directions of all cube edges on $ P(\a_t,x_t)$ are in $ \cup_{i=0}^{t-1} D^{-}_{x_i} $.) We have $\a_t\ne \bo $, for otherwise the distance between $(\a_t,x_t)$ and $(\bo,r-1)$ would be $ \min\{ r-1-x_t,x_t+1\} \le t-2 $, a contradiction. 
	We construct a path from $(\bo, r')$ to $(\b_t ,y_t) $ as follows. First, set $ y_i := x_i -r+1+ r'  \mod r $ with $ 0 \le y_i<r $, for $ 1\le i \le t $. Since $\max \min\{ |x_i -x_j|, r-|x_i -x_j|\} \le t-2 $, we have $ \max | y_i - y_j | \le t-2 $, where the maximum is taken over all $(i, j)$ such that $ (\a_i,x_i), (\a_j,x_j)\in P(\a_t,x_t)$. This together with $ d(t-1)<n $ (Claim 2) implies that $ D^{-}_{y_1}, D^{-}_{y_2},\dots,D^{-}_{y_t}  $ are mutually disjoint, and so $|\cup_{i=0}^{t-1} D^{-}_{x_i}| \le |\cup_{i=0}^{t-1} D^{-}_{y_i}| = td$. Hence we can choose an injective mapping $g: \cup_{i=0}^{t-1} D^{-}_{x_i} \rightarrow \cup_{i=0}^{t-1} D^{-}_{y_i} $ such that $g(p) \in D^{-}_{y_i}$ for $ p \in  D^{-}_{x_i}$. Set $\a_0=\b_0= \bo $. For $ 0\le i <t $, set $ \b_{i+1} = \b_{i} $ if $ \a_{i+1} = \a_{i} $, and $ \b_{i+1} = \b_{i} + \e_{g(p)} $ if $ \a_{i+1} = \a_{i} + \e_{p}$. 
	In this way we construct a path $P'(\a_t,x_t): (\bo, r'), (\b_1 ,y_1), \dots, (\b_t ,y_t)$. We now prove that $P'(\a_t,x_t)$ is a shortest path. That is, any path $P: (\bo, r'), (\b'_1 ,y'_1), \ldots, (\b'_{t'} ,y'_{t'}) =(\b_t ,y_t) $ in $\rcr$ has length at least $t$. In fact, since $ \b_t\ne \bo$, $P$ contains at least one cube edge and there exist $i, j$ and $l$ such that the $ i $th coordinate of  $ \b_t $ is $ 1$, and $ i \in D^{-}_{y'_l}  \cap D^{-}_{y_j} $ but $ y'_l \ne y_j $. Since the number of ring edges on $P$ is at least $ |y'_l-r'|+| y_t-y'_l|$, it suffices to prove that $ |y'_l-r'|+| y_t-y'_l| \ge t-1$.

In fact, since $ d(t-1)<n $ and $ D^{-}_{r'+i'} \cap D^{-}_{r'+j'} = \emptyset$ for every pair $(i', j')$ with $ |j'-i'| \le t-2$, we have $ | y'_l - y_j | \ge t-1$. Since $ \b_t\ne \bo $, we have
\begin{equation}
 \label{eq:yjyt}
	|y_j-r'|+|y_t-y_j|\le t-1,\, \text{ for }\, 0\le j \le t.
\end{equation}
If $ r'\le y_t \le y_j $ and $ y'_l\le y_j -(t-1) $, then $ r'+y_t \ge 2y_j-(t-1) $ by  \eqref{eq:yjyt} and so $ |y'_l-r'|+| y_t-y'_l| =  r'-y'_l +y_t-y'_l \ge 2y_j-(t-1)-2y'_l \ge t-1 $. If $ y_j\le r'\le y_t  $ and $ y'_l\ge y_j +(t-1) $, then $ -r'-y_t \ge -2y_j-(t-1) $ by (\ref{eq:yjyt}) and so $ |y'_l-r'|+| y_t-y'_l| = y'_l-r'+y'_l-y_t \ge 2y'_l - 2y_j -(t-1) \ge 2(t-1) -(t-1)=t-1 $. Similarly, $ |y'_l-r'|+| y_t-y'_l|\ge t -1$ in all other cases.  

So far we have proved that $P'(\a_t,x_t)$ is a shortest path in $N'_{t-2}$. Since $g$ is an injection, different  $(\a_t ,x_t) \in N_{t-2}$ gives rise to different $(\b_t,y_t) \in N'_{t-2}$. 
\end{proof}  

\textsf{Claim 4:}~$|N_{t-1}| < |N'_{t-1}|$.
\vspace{-.1in}
\begin{proof}	
We claim that, for $k \in D^{-}_{r-1} \cup (\cup_{i=1}^{t-1} D^{-}_{i-1})$ and $l  \in \cup_{j=1}^{t} D^{-}_{r-j} $, $(\e_{k}, t-2)$ and $(\e_{l}, r-t+1)$ are in $ N_{t-1} $. In fact, for every $0 \le x \le t-1$,  the paths $(\bo,r-1), (\bo, 0), \dots, (\bo, r-1+x ), (\e_k,  r-1+x ), (\e_k,  r+x ), \dots, (\e_k, t-2)$ and $(\bo,r-1), (\bo, r-2), \dots, (\bo,  r-x-1), (\e_{l },  r-x-1), (\e_{l }, r-x-2), \dots, (\e_{l }, r-t)$ are shortest paths. 
  	Similarly, for $k' \in \cup_{j=0}^{t-1} D^{-}_{r'+j}$ and $l' \in \cup_{j=0}^{t-1} D^{-}_{r'-j}$, $(\e_{k'},r' + t -1 )$ and $(\e_{l'},r'-t+1 )$  are in $ N'_{t-1} $, because for every $ 0\le y \le t-1 $ the paths $(\bo, r'), (\bo, r'+1), \dots, (\bo, r'+y), (\e_{k'}, r'+y), (\e_{k'}, r'+y+1), \dots, (\e_{k'}, r'+t-1)$ and $(\bo, r'), (\bo, r'-1), \dots, (\bo,r'- y), (\e_{l'}, r'- y), (\e_{l'}, r'- y-1), \dots, (\e_{l'}, r'-t+1)$ are shortest paths. 
  	Since  $|\cup_{j=1}^{t} D^{-}_{r-j} | = |\cup_{j=0}^{t-1} D^{-}_{r'-j}|$, the number of $t$-neighbours $(\e_l, r-t)$ of $(\bo,r-1)$ is equal to the number of $t$-neighbours $(\e_{l'}, r'-t+1)$  of $(\bo, r')$. However, by Claim 2, the number $|D^{-}_{r-1} \cup (\cup_{i=1}^{t-1} D^{-}_{i-1})|$ of $t$-neighbours $(\e_{k}, t-2)$ of $(\bo,r-1)$ is strictly less than the number $|\cup_{i=0}^{t-1} D^{-}_{r'+i}|$ of $t$-neighbours $(\e_{k'},r' + t -1 )$ of $(\bo, r')$. 
\end{proof}   
 
\textsf{Claim 5:}~$|N_{t}| = |N'_{t}|$.
\vspace{-.1in}
\begin{proof}	
	Clearly, $ N_t $ and $ N'_t $ consist of the vertices on the $ \bo $-ring which are at distance $ t $ from $ (\bo,r-1) $ and $ (\bo,r') $, respectively. The claim follows immediately. 
\end{proof}  

Combining Claims 3-5, the number of $t$-neighbours of $(\bo,r-1)$ is strictly less than that of $(\bo, r') $. Therefore, $Q_n^-(d,r)$ is not vertex-transitive and so is not a Cayley graph.  
 	 
\section{Proof of Lemma \ref{lem:Vx}}
\label{app:lemvx}

	Given non-negative integers $q$ and $p\ge 1$ with $p+q\le z$, let $L_{q,p} $ be the subset of $\ZZ_2^{d(z-1)} $ such that for any $\c \in L_{q,p}$ with $ \hat{w}_{\c} = (w_0,w_1,\ldots,w_s,w_{s+1}) $, we have $  w_{j}=q$, $w_{j+1}=p+q $ for some $ 1\le j \le s-2 $, $w_0 =0 $, $w_{s+1}=z $ and $  w_i\le w_{i+1} $ for $ 0\le i \le s $, where $ s=\|\c\| $. Hence, if $\c \in L_{q,p}$, then $ c_i =0 $ for $ i\in \cup_{j=q+1}^{p+q-1} D(j)$ (that is, $ d(q+1)<i\le d(q+p) $), $ c_i =1$ for at least one $i \in D(q) $ and at least one $ i\in D({p+q}) $. 
	Hence $|L_{q,p}| =  2^{d(z-p-1)} (2^{d}-1)$ if $q=0$ or $q = z - p$ and  $|L_{q,p}| =  2^{d(q-1)} (2^{d}-1)^2 2^{d (z-q-p-1) } = (2^{d}-1)^2 2^{d (z-p-2) }$ if $1\le q \le z -p-1$. 
	Thus, for any $p$ and $q$,
\begin{equation}
	\label{eq:lpq}
	| L_{q,p} | \le  2^{d (z- p)}.
	\end{equation}
	For any $ 1\le g <z $, let  $V_{g} := \{ \c \in \ZZ_2^{d(z-1)} : \max_{0\le i \le s} (w_{i+1}  - w_i) \le g \}$ and denote $ p^* = \max_{0\le i \le s} (w_{i+1}  - w_i)$. (Note that $ p^* $ relies on $ (\c,z) $.) For any $\c \in \ZZ_2^{d(z-1)} $ with $p^* >g$, we have $\c\in L_{q,p^*}$ for some $q\ge 0$.  Therefore, by (\ref{eq:lpq}), 
	\begin{align*}
	 |V_{g}| &=  2^{d(z-1)} - | \mathop{\cup}_{ p > g} \mathop{\cup}_{0\le q  \le z - p} L_{q,p}| \\
	 &\ge 2^{d(z-1)} - \sum_{p > g} (z-p+1) |L_{0,p}| \\
	  & \ge  2^{d(z-1)} - \sum_{p=g+1}^{z} (z-p+1) 2^{d(z-p)} \\
	  & = 2^{d(z-1)} - 2^{-d} \sum_{i=1}^{z-g} i 2^{di} . 
	\end{align*}
 	Since $\sum_{i=1}^{k} it^i = t(1-(k+1)t^k + kt^{k+1})/(t-1)^2$, we have $\sum_{i=1}^{k} it^i \le 2 k t^k$ for $t=2 $ and $ \sum_{i=1}^{k} it^i \le kt^{k+2}/(t-1)^2\le  2 k t^k$ for  $t=2^d$ and $d\ge 2$. Hence $ 2^{-d} \sum_{i=1}^{z-g} i 2^{di} \le  (z-g)  2^{d(z-g-1)+1} $ and therefore  
 		$|V_g| \ge 2^{d(z-1)} - (z-g)2^{d(z-g-1)+1}.$
 	Hence, for $g =\lceil \log^2 z \rceil$, we have  $ V_g = V(z) $, $ g \ge \log^2 z $ and so $| V(z) | \ge 2^{d(z-1)} (1 - (z-g) 2^{1-dg})   \ge 2^{d(z-1)} (1 -  2 z^{1-d\log z}) $. 

\section{Proof of Lemma \ref{lem:avr}}
\label{app:lemavr}
 
	From the discussion in Section \ref{sec:tdc2}, $b_k$ is  the number of $k$-compositions $ (r_1,r_2,\dots,r_k)  $ of $m$ with $1\le r_i \le g$ for each $ i $, plus the number of  $(k+1)$-compositions $ (r_1,r_2,\dots,r_{k+1}) $ with solutions $1\le r_i \le g$ for each $ i $. 

	Given non-negative integers $i,q,p$, denote by $C_{q,p}(i)$ the family of $k$-compositions $ (r_1,r_2,\dots,r_k)$ of $m$ such that $\sum_{j<i} r_j = q$,  $r_i= p$ and $\sum_{j>i} r_j = m-q-p$.  
	
	If $i=1$ ($i=k$, respectively), then $q =0$ ($q=m-p$, respectively) and  
	\begin{equation}
	\label{eq:Mipq1}
		|C_{q,p}(i)| = \binom{m-p-1}{k-2}. 
	\end{equation}
	For $2 \le i \le k-1$, we have $i -1 \le q \le m-p-k+i$ and
	\begin{equation}
	\label{eq:Mipq2}
		|C_{q,p}(i)| = \binom{q-1}{i-2} \binom{m -p - q - 1}{k - i -1}. 
	\end{equation}
	Therefore, the set of $k$-compositions $ (r_1,r_2,\dots,r_k)$ with  $\max_{1\le i \le k}r_i \ge g+1$ is  
		$$  \left( \mathop{\cup}_{ p > g} C_{0,p}(1)   \right)  \cup \left( \mathop{\cup}_{ p > g} \mathop{\cup}_{i = 2}^{k-1} \mathop{\cup}_{ q=i -1 }^{ m-p-k+i} C_{q,p}(i)\right) \cup \left( \mathop{\cup}_{ p > g} C_{m-p,p}(k)  \right). $$
	By (\ref{eq:Mipq1}) and (\ref{eq:Mipq2}), the size of this set  is at most
	\begin{equation}
	\label{eq:exprLem1}
		 	2 \sum_{p>g}  \binom{m-p-1}{k-2} +  \sum_{p>g} \sum_{i=2}^{k-1}  \sum_{q= i-1}^{m-p-k+i} \binom{q-1}{i-2} \binom{m - p -q-1}{k - i-1}.
	\end{equation}
	
	To simplify the expression above, we use the following known identity:
	\begin{equation}
	\label{eq:b1}
		\sum_{l=0}^{\alpha } \binom{l}{j} \binom{\alpha  - l}{\beta - j} = \binom{\alpha  + 1}{\beta+ 1},
	\end{equation}	
	where $\alpha,\beta$ and $j$ are integers with $0 \le j \le \beta \le \alpha $. 	 
	Applying (\ref{eq:b1}) to the case where $ \alpha=m-p-2 $, $ \beta = k-3 $ and $ l=q-1 $, one can verify that \eqref{eq:exprLem1} is less than or equal to		 
	$$
	 2 \sum_{p>g} \binom{m-p-1}{k-2} +  \sum_{p>g} \sum_{i=2}^{k-1} \binom{m -p - 1 }{k - 2 } 
	= k \sum_{p=g+1}^{m-k+1}  \binom{m -p - 1 }{k - 2 }= k \sum_{l=k-2}^{m-g-2}  \binom{l }{k - 2 }.
	$$
	Again by (\ref{eq:b1})  applied to $ \alpha = m-g-2 $ and $ j =\beta=k-2$, the right-hand side of the above equation is equal to $  k  \binom{m - g - 1 }{k - 1 } $. So the number of $k$-compositions of $m$ with $1\le r_i \le g$ for $ 1\le i \le k $ is at least $\binom{m-1}{k-1} - k  \binom{m - g - 1 }{k - 1 }.$ Similarly, the number of $(k+1)$-compositions of $m$ with $1\le r_i \le g$ for $ 1\le i \le k+1 $ is at least $	\binom{m-1}{k} - (k+1)  \binom{m - g - 1 }{k  }.$   
	Thus, since $\binom{m-1}{k-1} + \binom{m-1}{k} = \binom{m}{k}$, we have $ b_k\ge \binom{m}{k} - (k+1)  \binom{m - g  }{k  } $. On the other hand, the number of $k$-compositions of $m$ plus the number of  $(k+1)$-compositions of $ m $ is an upper bound for $ b_k$, that is, $ b_k \le \binom{m-1}{k-1} + \binom{m-1}{k} =\binom{m}{k}$. Therefore, we have
	\begin{align}
	\label{eq:ubk}
		\sum_{k=0}^{m}\! \binom{m}{k} {z}^k -\! \sum_{k=0}^{\lceil m/g \rceil-1}\! \binom{m}{k} {z}^k -\! \sum_{k=\lceil m/g \rceil}^{m} \!(k+1) \binom{\!m\!-\!g\!}{k } {z}^k &\le
		\sum_{k=\lceil m/g \rceil}^{m} b_k {z}^k  \\  \label{eq:ubk2}
		&\le \sum_{k=0 }^m \binom{m}{k} {z}^k  	
		= ( {z} +1 )^m.
	\end{align}
	Note that 	
	\begin{align}		 	 
	\nonumber
		\sum_{k = \lceil m/g \rceil}^{m} (k+1)  \binom{m - g  }{k} {z}^k  
		&\le (m-g+1 ) \sum_{k =  \lceil m/g \rceil}^{m-g}   \binom{m - g  }{k}   {z}^k\\
			\label{eq:psum2}	
		&<  
		({m-g+1}) ({z}+1)^{m-g}.
	\end{align} 
	Using $ \sum_{k=0}^{t}  \binom{m}{k} \le \binom{m}{t} \frac{m-t+1}{m-2 t +1} $ and $ \binom{m}{t} \le (\frac{me}{t})^t $, we obtain  
	\begin{align} 
	\sum_{k = 0}^{ \lceil m/g \rceil -1 } {z}^k  \binom{m}{k} 
	&\le  {z}^{ \lceil \frac{m}{g}  \rceil  - 1 } \sum_{k = 0}^{ \lceil m/g \rceil - 1 }  \binom{m}{k} \\
	&\le  {z}^{ \frac{m}{g}  }   \frac{( m  e  )^{\frac{m}{g} } }{ (\frac{m}{g} )^{\frac{m}{g} } } \frac{m-\lceil \frac{m}{g}  \rceil }{m-2\lceil \frac{m}{g}  \rceil -1 }.
	\end{align}
	This together with (\ref{eq:psum2}) and the lower bound in (\ref{eq:ubk}) yields  
	\[
		\sum_{k =\lceil m/g \rceil }^m b_k {z}^k \ge 
		({z}+1)^m \left( 1 -  \left(\frac{z}{z+1}\right)^{\frac{m}{g} }\frac{(e g)^{\frac{m}{g} }}{  (({z}+1)^{g-1})^{\frac{m}{g} } }\frac{m-\lceil\frac{m}{g}\rceil }{m-2\lceil\frac{m}{g}\rceil -1 }  - \frac{m-g}{({z} +1)^g} \right). 
	\]
	One can verify that $ \left(\frac{z}{z+1}\right)^{\frac{1}{g} }\frac{(e g)^{\frac{1}{g} }}{  (({z}+1)^{1-1/g})} \frac{(m-\lceil\frac{m}{g}\rceil)^{1/m}}{(m-2\lceil\frac{m}{g}\rceil -1)^{1/m}}   \le  \frac{g^{1/m}}{(z+1)^{g/m}}$ for $ m\ge9 $. %
	Using this, the expression above follows that
	\[
		\sum_{k =\lceil m/g \rceil }^m b_k {z}^k \ge
			({z}+1)^m \left( 1 - ({2}/(z +1))^g \right). 
	\]
	This together with the upper bound in (\ref{eq:ubk2}) completes the proof.

\bigskip
{\bf Acknowledgements}~~Mokhtar was supported by MIFRS and MIRS of the University of Melbourne and acknowledges helpful discussions with Teng Fang.  Zhou was supported by the Australian Research Council (FT110100629).

 
\footnotesize 
 
 \providecommand{\MR}{\relax\ifhmode\unskip\space\fi MR }
 \providecommand{\MRhref}[2]{%
   \href{http://www.ams.org/mathscinet-getitem?mr=#1}{#2}
 }
 \providecommand{\href}[2]{#2}

\end{document}